\documentclass[11pt]{amsart}
\usepackage{fullpage}
\usepackage[margin=1in]{geometry}
\usepackage{verbatim}
\usepackage[foot]{amsaddr}
\usepackage{lineno}
\usepackage{amsmath}
\usepackage{bbm}
\usepackage{graphicx}
\usepackage{float}
\usepackage{mathrsfs}
\usepackage[english]{babel}
\usepackage{epstopdf}
\usepackage{algorithmic}
\algsetup{indent=7mm}
\usepackage{amsthm}
\usepackage{amssymb}
\usepackage{listings}
\usepackage{xcolor}
\usepackage{enumerate}
\usepackage{thmtools}
\usepackage{thm-restate}
\usepackage[ruled,vlined]{algorithm2e}
\usepackage{epsfig}
\providecommand{\U}[1]{\protect\rule{.1in}{.1in}}
\usepackage[all]{xy}
\usepackage{caption}
\usepackage{longtable}
\usepackage{mathtools}
\usepackage{subcaption}
\usepackage{tikz-cd}
\usepackage{url}
\usepackage[pdftex,pdfborder={0 0 0},
colorlinks=true,
linkcolor=blue,
citecolor=red,
pagebackref=false,
]{hyperref}

\usepackage{listings}
\usepackage{xcolor}
\usepackage{enumerate}
\usepackage{thmtools}
\usepackage{thm-restate}
\usepackage{epsfig}
\usepackage[all]{xy}
\usepackage{caption}
\usepackage{longtable}
\usepackage{mathtools}
\usepackage{url}
\usepackage{tcolorbox}
\usepackage{multirow}

\setcounter{MaxMatrixCols}{30}%

\newcommand{\R}{\mathbb{R}}

\def\br{\textbf{BR}}
\def\sbr{\textbf{br}}

\def\val{\textbf{val}}

\DeclareMathOperator*{\argmin}{argmin}

\newtheorem{thm}{Theorem}[section]
\newtheorem{prop}[thm]{Proposition}
\newtheorem{lem}[thm]{Lemma}
\newtheorem{cor}[thm]{Corollary}

\theoremstyle{definition}
\newtheorem{ass}{Assumption}

\newtheorem{defn}[thm]{Definition}

\theoremstyle{remark}
\newtheorem{rem}[thm]{Remark}

\numberwithin{equation}{section}


\definecolor{dgreen}{rgb}{0.00,0.49,0.00}
\definecolor{Brown}{rgb}{0.45,0.0,0.05}


\title{Mean field optimization problems: stability results and Lagrangian discretization}

\author{Kang Liu$^{1,2}$ and Laurent Pfeiffer$^1$}
\address{$^1$Universit{\'e} Paris-Saclay, CNRS, CentraleSup{\'e}lec, Inria, Laboratoire des signaux et syst{\`e}mes, 91190, Gif-sur-Yvette, France.}
\address{$^2$Institut Polytechnique de Paris, CNRS, Ecole Polytechnique, CMAP, 91120 Palaiseau, France.}
\email{kang.liu@polytechnique.edu, laurent.pfeiffer@inria.fr}
\date{\today}

\begin{document}

\maketitle

\begin{abstract}
We formulate and investigate a mean field optimization (MFO) problem over a set of probability distributions $\mu$ with a prescribed marginal $m$. The cost function depends on an aggregate term, which is the expectation of $\mu$ with respect to a contribution function. This problem is of particular interest in the context of Lagrangian potential mean field games (MFGs) and their discretization.
We provide a first-order optimality condition and prove strong duality. We investigate stability properties of the MFO problem with respect to the prescribed marginal, from both primal and dual perspectives. In our stability analysis, we propose a method for recovering an approximate solution to an MFO problem with the help of an approximate solution to an MFO with a different marginal $m$, typically an empirical distribution. We combine this method with the stochastic Frank-Wolfe algorithm of \cite{bonnans2022large} to derive a complete resolution method.
\end{abstract}

\medskip

\noindent \emph{Keywords:}
optimization with probability measures, potential mean field games, non-atomic games, stability analysis, Frank-Wolfe algorithm.

\section{Introduction}

This article is dedicated to a general class of optimization problems involving probability measures with a prescribed marginal $m$. We will refer to them as Mean Field Optimization (MFO) problems. These typically arise in multi-agent optimization problems, for which a mean-field formulation of the problem, involving the probability distribution of the decisions of the agents (rather than an enumeration of them), is not only meaningful but also provides us with convexity properties of great numerical interest.

The first ambition of our work is to provide a general framework for the formulation of such situations. For the sake of clarity, we introduce here the MFO problems investigated in this work. We refer the reader to Sec.\@ \ref{sec:setting} for a complete description of the required assumptions. Let $X$ and $Y$ be two complete and separable metric spaces and let $\mathcal{H}$ be a separable Hilbert space. Let $Z$ be a closed subset of $X\times Y$ and let $m$ be a probability measure on $X$. We consider the following problem, parametrized by $m$:
\begin{equation}\label{pb:primal}\tag{P$_m$}
\inf_{\mu \in \mathcal{P}_{m}(Z)} \  f \left( \int_{Z} g(x,y) d \mu(x,y)  \right),
\end{equation}
where $g\colon Z \to \mathcal{H}$ is a Borel measurable function and $f\colon \mathcal{H}\to \R$ is a convex function.
The admissible set $\mathcal{P}_{m}(Z)$ is the set of all probability measures on $Z$ whose marginal distribution on $X$ is $m$. Our model allows for heterogeneity within the agents, which is modeled by some parameter $x \in X$. The decision variables of the agents are generically denoted by $y \in Y$ and the probability measure $\mu$ represents the distribution of the parameter-decision pairs $(x,y)$ of our agents. The distribution of the parameters of the agents is given by $m$, that is why we impose that $\mu$ has its first marginal equal to $m$. At an abstract level, we can interpret the term $\int_Z g d \mu$ as a common good, obtained by aggregating the contributions of all the agents.

\subsubsection*{Motivation.} Our original interest for MFO problems comes from non-atomic games with a potential structure, for which finding a Nash equilibrium is equivalent to solving an MFO problem. Among these games, we have a special interest for Lagrangian Mean Field Games (MFGs), in which the agents each optimize the trajectory of some dynamical system and are parametrized by their initial condition.
Problem \eqref{pb:primal} also arises in energy management problems, more specifically, in problems involving many small consumption (or production) units, for example electrical cars. For such problems, the common good is the total energy consumption, the parameter $x$ could model any relevant characteristic of the cars (such as their charging capacity) while the variable $y$ describes their charging profiles.
Finally, let us mention that MFO problems find applications in supervised learning, more specifically in the training of neuron networks with one hidden layer: in the mean field approximation of such problems, the variable $\mu$ simply describes the probability distribution of the weights of the neurons \cite{chizat2018global,mei2018mean,chen2023entropic}. For other applications of MFO problems in learning, we refer to \cite{boyd2017alternating,chizat2018global,wang2017vanishing} and the references therein.

\subsubsection*{Numerical approach}
The numerical resolution of Problem \eqref{pb:primal} poses two main difficulties: the numerical manipulation of probability measures and the treatment of the marginal constraint. Let us focus on the first difficulty by supposing momentarily that $m$ is a Dirac measure located at some point $x^*$, so that the problem \eqref{pb:primal} can simply be written as an optimization problem on $\mathcal{P}(Y)$. Unless $Y$ is finite, $\mathcal{P}(Y)$ is an infinite dimensional set. A first approach would consist in discretizing the set $Y$, which would preserve the convexity of the problem. This approach suffers from the curse of dimensionality, since it requires an exponential number of points with respect to the dimension of $Y$. A second approach would consist in representing the probability measure as the empirical mean of a set of $N$ points to be optimized. The clear drawback of this approach is the loss of convexity of the discretized problem; yet we mention that it proves efficient in the context of supervised learning problem \cite{chizat2018global}.
In the general context of MFO problems, the Frank-Wolfe (FW) algorithm is a particularly advantageous algorithm. It produces a finitely supported approximate solution and leverages the convexity of the original problem (without using a coarse discretization of $Y$). More specifically, it generates after $k$ iterations an $\mathcal{O}(1/k)$-optimal solution supported by at most $k$ points.

Coming back to the case of a general marginal, we propose to discretize $m$ with an empirical measure $m_N$ associated with $N$ points and thus to solve:
\begin{equation}\label{pb:primal_dis}\tag{P$_{m_N}$}
\inf_{\mu \in \mathcal{P}_{m_N}(Z)}  f \left( \int_{Z} g d \mu  \right).
\end{equation}
This idea was already proposed in \cite{sarrazin2022lagrangian}, in an MFG context.
By the disintegration theorem, \eqref{pb:primal_dis} is equivalent to a problem involving $N$ probability measures. A direct implementation of the FW algorithm would lead to an approximate solution possibly involving $kN$ points after $k$ iterations of the algorithm. We will see that the Stochastic Frank-Wolfe algorithm, which we introduced and analyzed in \cite{bonnans2022large}, allows to obtain an approximate solution of \eqref{pb:primal_dis} relying on only $N$ support points, to the price of an additional error term of order $\mathcal{O}(1/N)$ in the main convergence result.
Let us mention that the Frank-Wolfe algorithm (also called conditional gradient method) was already applied to potential MFGs, see for example
\cite{geist2022concave,lavigne2022generalized,liu2023mesh}. It can be seen as a generalization of the fictitious play, investigated in particular in
\cite{cardaliaguet2017learning,hadikhanloo2019finite}.

\subsubsection*{Theoretical results} At a theoretical level, we first establish a first-order necessary and sufficient optimality condition for \eqref{pb:primal} and an existence result, both relying on rather standard arguments. Then we perform a stability analysis of the problem with respect to the marginal $m$. It is of course motivated by the need to understand the effect of the discretization of $m$ in the numerical approach described above. We provide a constructive method, which we call \emph{bridging method}. It allows to construct an approximate solution to \eqref{pb:primal}, given an approximate solution to the problem with a different (but close) marginal. This allows to prove that the value of problem \eqref{pb:primal} is Lipschitz continuous with respect to $m$, for the Kantorovich-Rubinstein distance. Finally, we introduce a dual problem to \eqref{pb:primal} and prove that strong duality holds. We prove that the unique solution to the dual problem has a H\"older dependence with respect to $m$.

\subsubsection*{Organization}

In Section \ref{sec:pre}, we present some notations and results in measure theory and set-valued functions, as well as the rigorous description of the data of problem \eqref{pb:primal}. Section \ref{sec:primal} is dedicated to the primal problem: We provide a first-order optimality condition and an existence result. We perform in Section \ref{sec:stable} a stability analysis for the primal problem, based on our bridging method. In Section \ref{sec:dual}, we formulate the dual problem of \eqref{pb:primal}, we prove strong duality, and we prove the stability of the dual solution. We provide our numerical method in Section \ref{sec:algo}. We perform in Section \ref{sec:examples} some numerical simulations for a Lagrangian MFG model taken from \cite{graewe2022maximum} and for a congestion problem.

\section{Preliminaries}\label{sec:pre}

\subsection{Results in measure theory}

A metric space is called a \textit{Polish} space if it is complete and separable.
Let $X$ be a Polish space equipped with a metric $d_X$, and let $\mathcal{X}$ be a $\sigma$-algebra on $X$. The Borel $\sigma$-algebra on $X$ is denoted by $\mathcal{B}^X$. Given any measure $m$ on $\mathcal{X}$, we refer to the triplet $(X, \mathcal{X}, m)$ as a measure space. Measure spaces are said to be complete if for any $A\in \mathcal{X}$ with $m(A)=0$ and for any subset $B$ of $A$, we have $B\in \mathcal{X}$.
We define
\begin{align*}
\mathcal{P}(X) &\coloneqq \big\{ m \text{ is a positive Borel measure on } X, \text{ and } m(X) =1 \big\};\\[0.4em]
\mathcal{P}^1(X) &\coloneqq \left\{ m\in \mathcal{P}(X) \, \Big| \, \exists\, x_0\in X \text{ such that } \int_{X}d_X(x,x_0)dm < +\infty \right\}.
\end{align*}
Let $\delta_x$ denote the Dirac measure at point $x$. We denote by $\mathcal{P}_{\delta}(\Omega)$ the set of finitely supported probability measures, defined by
\begin{equation*}
\mathcal{P}_{\delta}(X)
\coloneqq
\Bigg\{
\sum_{k=1}^K \omega_k \delta_{x_k}
\, \Big| \, K \in \mathbb{N}, \, (\omega_k)_{k=1}^K \in (\R_+)^K, \, (x_k)_{k=1}^K \in X^K, \, \sum_{k=1}^K \omega_k = 1
\Bigg\}.
\end{equation*}
In particular, we call $m\in \mathcal{P}_{\delta}(X)$ an \textit{empirical distribution} if $\lambda_k = 1/K$ for $k=1,\ldots, K$.

The set $\mathcal{P}(X)$ is endowed with the narrow topology. We say that a sequence $(m_n)_{n\geq 1}$ in $\mathcal{P}(Z)$ narrowly converges to some $m\in\mathcal{P}(X)$ if for any bounded and continuous function $F\colon X\to \R$,
\begin{equation*}
\lim_{n\to +\infty} \int_{X} F d m_n = \int_{X} F d m.
\end{equation*}
The space $\mathcal{P}^{1}(X)$ is endowed with the \textit{Kantorovich–Rubinstein Distance},
\begin{equation*}
d_1(m_0,m_1) \coloneqq \sup_{F\in \text{Lip}_1(X) } \int_{\Omega} F  d (m_0- m_1),
\end{equation*}
where Lip$_1(X)$ is the set of all 1-Lipschitz continuous functions on $X$.
For any $m\in \mathcal{P}(X)$, the support of $m$ is defined by
\begin{equation}
\label{eq:support}
\text{supp}(m) \coloneqq \big\{ x\in X \, \mid\, m(V)>0 \text{ for all open set $V$ such that } x \in V \big\}.
\end{equation}

\begin{lem}\label{lm:support}
Let $m\in \mathcal{P}(X)$.
Let $F \colon X \to \R_{+}$ be a Borel measurable function. Assume that
\begin{equation*}
\int_{X} F dm = 0.
\end{equation*}
Then $F = 0$, $m$-a.e. Moreover, if $F^{-1}(\{0\})$ is closed, then \textnormal{supp}$(m) \subseteq F^{-1}(\{0\})$.
\end{lem}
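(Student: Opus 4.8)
The statement has two parts. The plan is to prove the $m$-a.e. vanishing first, and then deduce the support inclusion under the closedness hypothesis.

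For the first part, the natural approach is to argue by contradiction using the sets where $F$ is bounded away from zero. Define, for each $n \geq 1$, the set $A_n \coloneqq \{x \in X \mid F(x) > 1/n\}$, which is measurable since $F$ is Borel measurable. On $A_n$ we have the pointwise bound $F \geq (1/n)\mathbbm{1}_{A_n}$, and since $F \geq 0$ everywhere, monotonicity of the integral gives
\begin{equation*}
0 = \int_X F \, dm \geq \int_{A_n} F \, dm \geq \frac{1}{n} m(A_n),
\end{equation*}
hence $m(A_n) = 0$ for every $n$. The set $\{F > 0\} = \bigcup_{n \geq 1} A_n$ is a countable union of null sets, so $m(\{F > 0\}) = 0$ by countable subadditivity, which is exactly the assertion that $F = 0$ $m$-a.e.

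For the second part, assume additionally that $F^{-1}(\{0\})$ is closed, so that its complement $U \coloneqq X \setminus F^{-1}(\{0\}) = \{x \in X \mid F(x) > 0\}$ is open. The first part already gives $m(U) = 0$. I would argue directly from the definition \eqref{eq:support} of the support: take any $x \in U$. Since $U$ is an open set containing $x$ and $m(U) = 0$, the point $x$ fails the defining condition for membership in $\text{supp}(m)$ (which requires strictly positive mass on every open neighborhood). Therefore $x \notin \text{supp}(m)$. Contrapositively, every point of $\text{supp}(m)$ lies in $F^{-1}(\{0\})$, giving the desired inclusion $\text{supp}(m) \subseteq F^{-1}(\{0\})$.

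No step here presents a genuine obstacle; the argument rests only on monotonicity and countable subadditivity of the measure together with the definition of support. The sole point requiring the extra closedness hypothesis is the second part: without it, $\{F > 0\}$ need not be open, so one could not exhibit a fixed open neighborhood of each of its points witnessing null mass, and the clean support inclusion would not follow from this argument.
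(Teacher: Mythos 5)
Your proof is correct and takes essentially the same route as the paper: the paper simply cites \cite[Thm.\@ 1.39(a)]{rudin} for the $m$-a.e.\ vanishing (which you instead prove directly via the standard Chebyshev-type argument with $A_n=\{F>1/n\}$), and for the second part both arguments exploit closedness of $F^{-1}(\{0\})$ to obtain an open set on which $F>0$ and then invoke the definition \eqref{eq:support} of the support. The only cosmetic difference is that you argue contrapositively, using the null open set $U=\{F>0\}$ itself as the witnessing neighborhood, whereas the paper argues by contradiction that a support point outside $F^{-1}(\{0\})$ would force $\int_X F\,dm>0$.
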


\begin{proof}
The fact that $F=0$, $m$-a.e., is from \cite[Thm.\@ 1.39(a)]{rudin}. Now, let $F^{-1}(\{0\})$ be closed.
Suppose that there exists $x \in \textnormal{supp}(m)$ such that $x \notin F^{-1}(\{0\})$. Since $F^{-1}(\{0\})$ is closed, there exists an open neighborhood $V$ of $x$ such that $F(x) > 0$, for all $x\in V$. By the definition of the support of a probability measure, we have $m(V) > 0$. Therefore, $ \int_{X} F d m \geq \int_{V} F d m > 0$, contradiction.
\end{proof}

\subsection{Results about set-valued functions}

In this subsection, we consider a metric space $X$ equipped with a metric $d_X$, a $\sigma$-algebra $\mathcal{X}$ on $X$, and a measure $m$ on $\mathcal{X}$. Additionally, we fix a Polish space $Y$ with a metric $d_Y$, and we denote the Borel $\sigma$-algebra on $Y$ by $\mathcal{B}^Y$.
We call $F$ a set-valued function from $X$ to $Y$ if $F(x) \subseteq Y$ for all $x\in X$, denoted by $X\rightsquigarrow Y$ for short. The graph of $F$ is defined by
\begin{equation*}
\text{Graph}(F) \coloneqq \left\{
(x,y)\in X\times Y \, \mid \,  y \in F(x)  \right\}.
\end{equation*}
We say that $F$ has closed (non-empty) images, if for any $x\in X$, $F(x)$ is closed (non-empty) in $Y$.

Let us give some definitions concerning regularity properties of set-valued functions, which are from \cite[Def.\@ 1.4.1, Def.\@ 1.4.2, Def.\@ 1.4.5, and Def.\@ 8.1.1]{aubin2009set}.

\begin{defn}\label{def1}
Let $F\colon X\rightsquigarrow Y$ be a set-valued function with non-empty images.
\begin{enumerate}
\item (Lower semi-continuity). The set-valued function $F$ is \textit{lower semi-continuous} at point $x\in X$ if  for any $y\in F(x)$ and any sequence $(x_n \in X)_{n\geq 1}$ converging to $x$, there exists $y_n\in F(x_n)$ converging to $y$. The set-valued function $F$ is said to be lower semi-continuous if it is lower semi-continuous at each point $x\in X$.
\item (Upper semi-continuity). The set-valued function $F$ is \textit{upper semi-continuous} at point $x\in X$ if for any neighborhood $\mathcal{U}$ of $F(x)$, there exists $\eta>0$ such that for any $x'\in B_{X}(x,\eta)$, we have
\begin{equation*}
F(x')\subseteq \mathcal{U}.
\end{equation*}
The set-valued function $F$ is said to be upper semi-continuous if it is upper semi-continuous at each point $x\in X$.
\item (Lipschitz continuity). When $X$ and $Y$ are normed vector spaces, we say that $F$ is $L$-Lipschitz continuous on $X$, for some $L>0$, if for any $x_1, x_2 \in X$,
\begin{equation*}
F(x_1) \subseteq F(x_2) + B_Y(0, Ld_{X}(x_1,x_2)).
\end{equation*}
Here $B_Y(0,r)$ denotes the closed ball in $Y$ centered at $0$ with radius $r>0$.
\item (Measurability). The set-valued function $F$ is \textit{measurable} if the inverse image of any open subset $\mathcal{O}$ of $Y$ is measurable, i.e.,
\begin{equation*}
F^{-1}(\mathcal{O})\coloneqq \left\{ x\in X \,\mid\, F(x) \cap \mathcal{O} \neq  \emptyset \right\} \in \mathcal{X}.
\end{equation*}
\end{enumerate}
\end{defn}

An important property of measurable set-valued functions is the existence of measurable selections.

\begin{thm}[Measurable selection]\label{thm:measurable_selection}
Let $F\colon X\rightsquigarrow Y$ be a measurable set-valued function with non-empty images. Then $F$ has a \textit{measurable selection} $f$, i.e.,
$f\colon X\to Y$ is $(\mathcal{X},\mathcal{B}^Y)$-measurable and $f (x) \in F(x)$ for any $x\in X$.
\end{thm}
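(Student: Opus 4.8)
The plan is to establish this as the Kuratowski--Ryll-Nardzewski measurable selection theorem, whose proof exploits the Polishness of $Y$: I fix a countable dense subset $\{y_i\}_{i\ge 1}$ of $Y$ and use that $(Y,d_Y)$ is complete. Since for every open set $\mathcal{O}$ one has $F(x)\cap\mathcal{O}\neq\emptyset$ if and only if $\overline{F(x)}\cap\mathcal{O}\neq\emptyset$, the closed-valued map $x\mapsto\overline{F(x)}$ has the same inverse images of open sets as $F$ and is therefore measurable in the sense of Definition \ref{def1}(4). I may thus carry out the construction for closed, non-empty images and recover membership in $F(x)$ itself at the end, which is where closedness of the images is used. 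The idea is to build a sequence of measurable \emph{approximate selections} $f_n\colon X\to Y$ that converge uniformly in $x$ to a genuine selection.

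The key construction builds $f_n$ by induction so that: (i) each $f_n$ takes values in the countable set $\{y_i\}$ and is $(\mathcal{X},\mathcal{B}^Y)$-measurable; (ii) $d_Y(f_n(x),F(x))<2^{-n}$ for all $x$; and (iii) $d_Y(f_{n+1}(x),f_n(x))<2^{-n+1}$ for all $x$. Given $f_n$, for each index $i$ I consider the set $E_i=\{x: d_Y(y_i,F(x))<2^{-(n+1)}\}\cap\{x:d_Y(y_i,f_n(x))<2^{-n}+2^{-(n+1)}\}$. The first set equals $F^{-1}\big(\{z\in Y: d_Y(z,y_i)<2^{-(n+1)}\}\big)$, an inverse image of an open ball, hence lies in $\mathcal{X}$ by measurability of $F$; the second lies in $\mathcal{X}$ because $f_n$ is measurable. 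Density of $\{y_i\}$ together with (ii) (there is $z\in F(x)$ with $d_Y(z,f_n(x))<2^{-n}$, and a dense point within $2^{-(n+1)}$ of $z$) shows $\bigcup_i E_i=X$. Disjointifying via $A_i=E_i\setminus\bigcup_{j<i}E_j$ and setting $f_{n+1}\equiv y_i$ on each $A_i$ produces a measurable, countably valued function satisfying (ii) and (iii) at level $n+1$; for the first function $f_1$ only the condition $d_Y(y_i,F(x))<2^{-1}$ is imposed.

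To pass to the limit, property (iii) makes $(f_n(x))_n$ Cauchy uniformly in $x$, since $\sum_n 2^{-n+1}<\infty$; completeness of $(Y,d_Y)$ yields a pointwise limit $f(x)=\lim_n f_n(x)$, which is measurable as a pointwise limit of measurable maps into a metric space. From (ii) and $f_n(x)\to f(x)$ I obtain $d_Y(f(x),F(x))=0$, so that $f(x)\in\overline{F(x)}=F(x)$ when the images are closed, giving the desired selection.

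I expect the main obstacle to be the inductive step: one must choose, measurably in $x$, an index $i(x)$ for which $y_{i(x)}$ simultaneously approximates $F(x)$ to the sharper tolerance $2^{-(n+1)}$ and stays close to the previous value $f_n(x)$ so that the sequence remains Cauchy. Both requirements are needed at once, since approximation alone would not guarantee convergence while convergence alone would not force the limit into $F(x)$, and it is precisely the partition into the measurable sets $A_i$ that converts this pointwise choice into a measurable function. Verifying that the $E_i$ are measurable and cover $X$ is the technical heart, and it is exactly here that the two structural hypotheses, measurability of $F$ and the density/completeness of the Polish space $Y$, come into play.
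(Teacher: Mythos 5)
Your proof is correct, but it relates to the paper in an unusual way: the paper does not actually prove Theorem \ref{thm:measurable_selection}, it simply cites \cite[Thm.\@ 8.1.3]{aubin2009set}, and what you have written out is precisely the Kuratowski--Ryll-Nardzewski successive-approximation argument that underlies that citation. Your inductive step is sound: the set $\{x \,\mid\, d_Y(y_i,F(x))<2^{-(n+1)}\}$ equals $F^{-1}\big(\{z\in Y \,\mid\, d_Y(z,y_i)<2^{-(n+1)}\}\big)$, the inverse image of an open ball, hence lies in $\mathcal{X}$ by Definition \ref{def1}(4); the second defining condition of $E_i$ is measurable because $x\mapsto d_Y(y_i,f_n(x))$ composes a continuous function with the measurable $f_n$; the covering $\bigcup_i E_i = X$ follows from the triangle inequality exactly as you indicate; and the bound $d_Y(f_{n+1}(x),f_n(x))<2^{-n}+2^{-(n+1)}<2^{-n+1}$ is summable, so completeness of $Y$ gives a uniform limit $f$, which is measurable as a pointwise limit of measurable maps into a metric space. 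What each route buys: the paper's citation is economical, while your argument makes visible exactly where each hypothesis acts --- separability of $Y$ for the countable dense set and the covering, completeness for the limit, and measurability of $F$ only through inverse images of open balls. One caveat deserves emphasis: your limit satisfies only $f(x)\in\overline{F(x)}$, so closedness of the images is genuinely needed to conclude $f(x)\in F(x)$. The statement in the paper omits this hypothesis (it is present in \cite[Thm.\@ 8.1.3]{aubin2009set}), but this is an imprecision of the paper rather than a gap in your proof: closedness is available in every application the paper makes of the theorem --- the images of $\br_{\lambda}$ are closed since its graph is closed (Lemma \ref{lm:close}), and Lemma \ref{lm:S1} explicitly verifies that $S$ has non-empty closed images --- so the closed-valued version you prove, and whose necessity you correctly flag, is exactly what the paper uses.
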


\begin{proof}
See \cite[Thm.\@ 8.1.3]{aubin2009set}.
\end{proof}

The following two lemmas will allow us to prove the measurability of some set-valued functions.

\begin{lem}\label{lm:set_measruable_1}
If $F\colon X\rightsquigarrow Y$ is a set-valued function such that $F^{-1}(\mathcal{C})\in \mathcal{X}$ for any closed subset $\mathcal{C}$ of $Y$, then $F$ is measurable.
\end{lem}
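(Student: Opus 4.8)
The plan is to reduce measurability in the sense of Definition \ref{def1}(4), namely $F^{-1}(\mathcal{O}) \in \mathcal{X}$ for every open $\mathcal{O}\subseteq Y$, to the hypothesis concerning closed sets. The only structural property of $Y$ that I would use is that, being a metric space, every open subset of $Y$ is an $F_\sigma$ set, i.e.\ a countable union of closed sets.

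Fix an open set $\mathcal{O}\subseteq Y$. If $\mathcal{O}=Y$, then $\mathcal{O}$ is itself closed and $F^{-1}(Y)\in\mathcal{X}$ directly by hypothesis. Otherwise $Y\setminus\mathcal{O}$ is a non-empty closed set, and for each $n\geq 1$ I would set
\[
C_n \coloneqq \bigl\{\, y\in Y \,:\, d_Y(y, Y\setminus\mathcal{O}) \geq 1/n \,\bigr\}.
\]
Since $y\mapsto d_Y(y,Y\setminus\mathcal{O})$ is $1$-Lipschitz, hence continuous, each $C_n$ is closed; and because $Y\setminus\mathcal{O}$ is closed, $d_Y(y,Y\setminus\mathcal{O})>0$ holds precisely when $y\in\mathcal{O}$. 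Hence $\mathcal{O}=\bigcup_{n\geq 1}C_n$.

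The second ingredient is that the set-valued inverse image commutes with countable unions: by definition $F^{-1}(\mathcal{A})=\{x : F(x)\cap\mathcal{A}\neq\emptyset\}$, and $F(x)$ meets $\bigcup_n C_n$ if and only if it meets some $C_n$, so $F^{-1}\bigl(\bigcup_n C_n\bigr)=\bigcup_n F^{-1}(C_n)$. Combining the two steps gives $F^{-1}(\mathcal{O})=\bigcup_{n\geq 1}F^{-1}(C_n)$. Each term lies in $\mathcal{X}$ by hypothesis, and $\mathcal{X}$ is stable under countable unions, whence $F^{-1}(\mathcal{O})\in\mathcal{X}$. Since $\mathcal{O}$ was arbitrary, $F$ is measurable.

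There is no genuine obstacle in this argument. The only point deserving care is the identity $\mathcal{O}=\bigcup_n C_n$, which relies on $Y\setminus\mathcal{O}$ being closed so that vanishing distance characterizes membership in the complement; the commutation of the inverse image with unions is immediate from the definition and requires no regularity of $F$ whatsoever.
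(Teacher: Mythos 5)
Your argument is correct. The paper itself gives no proof for this lemma: it simply cites \cite[Prop.\@ III.11]{castaing2006convex}, and the argument underlying that reference is precisely the one you reconstruct — in a metric space every open set is an $F_\sigma$ set, obtained via the $1$-Lipschitz distance function to the closed complement, and the set-valued inverse image $F^{-1}(\mathcal{A})=\{x : F(x)\cap\mathcal{A}\neq\emptyset\}$ commutes with countable unions, so $F^{-1}(\mathcal{O})=\bigcup_n F^{-1}(C_n)\in\mathcal{X}$. All the delicate points are handled properly: you use closedness of $Y\setminus\mathcal{O}$ exactly where it is needed (vanishing distance characterizes membership in the complement), the degenerate case $\mathcal{O}=Y$ is treated separately so that the distance function to the complement is well defined, and no regularity of $F$ is invoked. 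In effect your proposal upgrades the paper's citation to a self-contained elementary proof; nothing is missing.
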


\begin{proof}
See \cite[Prop.\@ III.11]{castaing2006convex}.
\end{proof}

\begin{lem}\label{lm:set_measurable_2}
Let $(X, \mathcal{X}, m)$ be a complete measure space, with $m$ a positive measure such that $m(X)=1$. Then any set-valued mapping $F\colon X\rightsquigarrow Y$ is measurable if and only if $\text{Graph}(F)$ belongs to $\mathcal{X}\otimes \mathcal{B}^Y$.
\end{lem}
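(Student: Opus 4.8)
The plan is to prove the two implications separately; completeness of the measure space is needed only for one of them, and closedness of the images (the standard setting for this equivalence) only for the other.

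\emph{From the graph to measurability.} Suppose $\text{Graph}(F)\in\mathcal{X}\otimes\mathcal{B}^Y$. The key identity is that, for every open set $\mathcal{O}\subseteq Y$,
\[
F^{-1}(\mathcal{O})=\pi_X\big(\text{Graph}(F)\cap(X\times\mathcal{O})\big),
\]
where $\pi_X\colon X\times Y\to X$ is the canonical projection; indeed, $x\in F^{-1}(\mathcal{O})$ means precisely that there is some $y\in\mathcal{O}$ with $(x,y)\in\text{Graph}(F)$. Since $X\times\mathcal{O}\in\mathcal{X}\otimes\mathcal{B}^Y$, the set inside the projection lies in $\mathcal{X}\otimes\mathcal{B}^Y$. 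I would then invoke the measurable projection theorem: as $Y$ is Polish and $(X,\mathcal{X},m)$ is a complete finite measure space (here $m(X)=1$), the projection onto $X$ of any set of $\mathcal{X}\otimes\mathcal{B}^Y$ belongs to $\mathcal{X}$. Hence $F^{-1}(\mathcal{O})\in\mathcal{X}$ for all open $\mathcal{O}$, i.e.\@ $F$ is measurable. I expect this to be the main obstacle: it rests on the theory of analytic (Souslin) sets, by which $\pi_X(E)$ is $\mathcal{X}$-analytic, hence universally measurable, so that completeness of $\mathcal{X}$ with respect to the finite measure $m$ forces $\pi_X(E)\in\mathcal{X}$. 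This is exactly where the hypotheses ``complete'' and ``$Y$ Polish'' are used.

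\emph{From measurability to the graph.} Here I would use that the images are closed. Consider the distance map $\varphi(x,y)\coloneqq d_Y\big(y,F(x)\big)=\inf_{z\in F(x)}d_Y(y,z)$. For fixed $y$ and any $r>0$ we have $\{x:\varphi(x,y)<r\}=F^{-1}\big(B_Y(y,r)\big)\in\mathcal{X}$ by measurability of $F$, so $x\mapsto\varphi(x,y)$ is $\mathcal{X}$-measurable; and $y\mapsto\varphi(x,y)$ is $1$-Lipschitz, hence continuous. Thus $\varphi$ is a Carath\'eodory function and, $Y$ being separable, it is jointly $(\mathcal{X}\otimes\mathcal{B}^Y)$-measurable. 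Because $F$ has closed images,
\[
\text{Graph}(F)=\big\{(x,y)\in X\times Y:\varphi(x,y)=0\big\}=\varphi^{-1}(\{0\}),
\]
which therefore belongs to $\mathcal{X}\otimes\mathcal{B}^Y$.

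Assembling the two directions gives the equivalence. The only delicate ingredient is the measurable projection theorem in the first part; the second part is elementary once the joint measurability of $\varphi$ is established. As an alternative to the distance-function argument, one could instead build a Castaing representation $F(x)=\overline{\{f_n(x):n\geq1\}}$ out of countably many measurable selections furnished by Theorem \ref{thm:measurable_selection}, and then write $\text{Graph}(F)=\bigcap_{k\geq1}\bigcup_{n\geq1}\{(x,y):d_Y(f_n(x),y)<1/k\}$ to see directly that it lies in $\mathcal{X}\otimes\mathcal{B}^Y$.
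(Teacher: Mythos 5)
Your proof is correct, and it cannot be compared against an argument in the paper because the paper gives none: Lemma \ref{lm:set_measurable_2} is quoted from \cite[Thm.\@ 8.1.4]{aubin2009set} with the proof deferred to that reference. What you have written is essentially the standard argument behind that citation. The direction $\text{Graph}(F)\in\mathcal{X}\otimes\mathcal{B}^Y\Rightarrow F$ measurable, via the identity $F^{-1}(\mathcal{O})=\pi_X\big(\text{Graph}(F)\cap(X\times\mathcal{O})\big)$ and the measurable projection theorem, is indeed the crux, and you locate correctly where each hypothesis enters: $Y$ Polish makes the projection an analytic set, and completeness of the finite measure space upgrades universal measurability to membership in $\mathcal{X}$. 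The converse via the Carath\'eodory function $\varphi(x,y)=d_Y(y,F(x))$ with $\text{Graph}(F)=\varphi^{-1}(\{0\})$ is sound, and the Castaing-representation alternative you sketch is equally valid.

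One point deserves to be made explicit, and you only partly flagged it: the lemma as stated in the paper says ``any set-valued mapping,'' but the implication measurable $\Rightarrow$ graph measurable genuinely requires closed images (a standing hypothesis in the chapter of \cite{aubin2009set} being cited). Without it the statement is false: take $X$ a one-point space with $\mathcal{X}=\{\emptyset,X\}$ and $m(X)=1$ (trivially complete), $Y=\R$, and $F(x)=A$ a non-Borel dense subset of $\R$. Then $F^{-1}(\mathcal{O})\in\{\emptyset,X\}$ for every open $\mathcal{O}$, so $F$ is measurable in the sense of Definition \ref{def1}(4), yet $\text{Graph}(F)=X\times A\notin\mathcal{X}\otimes\mathcal{B}^Y$, since sections of product-measurable sets are Borel. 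So your insertion of closedness repairs an imprecision in the paper's statement rather than weakening your proof. Two cosmetic remarks: in $\{x:\varphi(x,y)<r\}=F^{-1}\big(B_Y(y,r)\big)$ the ball must be the \emph{open} ball (the paper's convention in Definition \ref{def1} is that $B_Y$ denotes the closed ball, which would instead correspond to $\varphi(x,y)\leq r$); and empty values cause no harm in your second direction, since there $\varphi(x,\cdot)\equiv+\infty$ and the corresponding section of the graph is empty.
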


\begin{proof}
See \cite[Thm.\@ 8.1.4]{aubin2009set}.
\end{proof}

\subsection{Data setting and technical lemmas}\label{sec:setting}

Recall the MFO problem \eqref{pb:primal}. We consider the following setting:
\begin{itemize}
\item Two Polish spaces and their Borel $\sigma$-algebras: $(X, \mathcal{B}^X)$ and $(Y,\mathcal{B}^Y)$.
\item A probability distribution on $X$: $m \in \mathcal{P}(X)$.
\item A set-valued function $F\colon X\rightsquigarrow Y$ with a closed graph and non-empty images. Let
\begin{equation*}
Z \coloneqq \text{Graph}(F), \qquad Z_x \coloneqq F(x), \ \forall x \in X.
\end{equation*}
\item The admissible set of probability measures:
\begin{equation*}
\mathcal{P}_m(Z) \coloneqq \left\{ \mu\in  \mathcal{P}(Z)\, \mid \, \pi_1\# \mu = m  \right\},
\end{equation*}
where $\pi_1\colon Z \to X, \,  (x,y) \mapsto  x$.
\item A separable Hilbert space: $\mathcal{H}$.
\item Two Borel measurable functions: $g\colon Z \to \mathcal{H}$ and $f\colon \mathcal{H}\to \R$.
\end{itemize}

The integral $\int_{Z}g d\mu$ in \eqref{pb:primal} should be interpreted in the Bochner integration sense. We refer to \cite[Appx.\@ E]{cohn2013measure} for Bochner integrable functions.

\begin{lem}\label{lm:Bochner}
If there exists a constant $M>0$ such that $\|g(z)\|\leq M$ for any $z\in Z$, then the function $g$ is Bochner integrable with respect to any $\mu\in \mathcal{P}(Z)$, i.e., $\int_{Z}gd\mu$ exists. Moreover, for any $\lambda\in \mathcal{H}$, we have
\begin{equation*}
\left\langle \lambda, \int_{Z} g d \mu \right\rangle = \int_{Z} \langle \lambda, g\rangle d\mu.
\end{equation*}
As a consequence, for any $\mu_1,\mu_2 \in \mathcal{P}(Z)$, we have
\begin{equation*}
\left\langle \int_{Z} g d\mu_1\, , \, \int_{Z} g d\mu_2\right\rangle = \int_{Z}\int_{Z} \langle g(x),g(y) \rangle d\mu_1(x)d\mu_2(y).
\end{equation*}
\end{lem}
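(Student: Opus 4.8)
The plan is to prove the three assertions of Lemma \ref{lm:Bochner} in sequence, relying on the standard theory of Bochner integration in a separable Hilbert space. The boundedness hypothesis $\|g(z)\|\leq M$ will be the workhorse throughout, since it simultaneously guarantees integrability and justifies all the interchanges of integral and inner product.

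First I would establish Bochner integrability. Recall that a measurable function into a separable Banach space is Bochner integrable if and only if it is strongly measurable and $\int_Z \|g\| \, d\mu < +\infty$. Since $\mathcal{H}$ is separable and $g$ is Borel measurable (by assumption in the data setting), $g$ is strongly measurable by the Pettis measurability theorem. The integrability of the norm is immediate from the bound: $\int_Z \|g(z)\| \, d\mu \leq M \mu(Z) = M < +\infty$, using that $\mu$ is a probability measure. Hence $\int_Z g \, d\mu$ exists as a Bochner integral.

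Next I would prove the identity $\langle \lambda, \int_Z g \, d\mu\rangle = \int_Z \langle \lambda, g\rangle \, d\mu$. This is a fundamental property of the Bochner integral: for any continuous linear functional $\Lambda$ on $\mathcal{H}$, one has $\Lambda(\int_Z g\, d\mu) = \int_Z \Lambda(g)\, d\mu$ (see \cite[Appx.\@ E]{cohn2013measure}). Applying this with $\Lambda = \langle \lambda, \cdot\rangle$, which is continuous and linear by Cauchy--Schwarz, gives the claim directly; note the scalar integrand $\langle \lambda, g\rangle$ is integrable because $|\langle\lambda, g\rangle|\leq \|\lambda\| M$.

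For the final consequence, I would apply the previous identity twice in a Fubini-type argument. Fix $\mu_1,\mu_2$ and write $\langle \int_Z g\, d\mu_1, \int_Z g\, d\mu_2\rangle$; applying the inner-product identity with the fixed vector $\lambda = \int_Z g\, d\mu_2$ moves the outer integral inside, yielding $\int_Z \langle g(x), \int_Z g\, d\mu_2\rangle \, d\mu_1(x)$. Applying the identity a second time to the inner factor gives $\int_Z \int_Z \langle g(x), g(y)\rangle \, d\mu_2(y)\, d\mu_1(x)$. The main point requiring care is justifying the iterated integral against the product measure $\mu_1\otimes\mu_2$: the integrand $(x,y)\mapsto \langle g(x), g(y)\rangle$ is product-measurable (being a composition of the measurable map $(x,y)\mapsto(g(x),g(y))$ with the continuous inner product) and bounded by $M^2$, so Fubini's theorem applies and the two successive applications of the scalar identity are legitimate. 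I expect this measurability-and-boundedness bookkeeping for the double integral to be the only genuinely delicate step; everything else is a direct citation of the Bochner integration framework.
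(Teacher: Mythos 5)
Your proposal is correct and follows essentially the same route as the paper: strong measurability from the separability of $\mathcal{H}$, Bochner integrability by domination with the constant $M$, the scalar identity via the standard linear-functional property of the Bochner integral, and the double-integral formula by applying that identity twice. The Fubini bookkeeping you add is harmless but not needed, since the claimed formula is an iterated integral and two successive applications of the scalar identity (expanding first in the $\mu_1$ argument to match the stated order of integration) already yield it.
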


\begin{proof}
As $\mathcal{H}$ is separable, the function $g$ is strongly measurable. Moreover, as the constant function $M$ is Bochner integrable with respect to any $\mu\in\mathcal{P}(Z)$, and $|g(z)|\leq M$ for any $z\in Z$, it follows from \cite[Prop.\@ E.2, Thm.\@ E.6]{cohn2013measure} that $g$ is Bochner integrable with respect to any $\mu\in \mathcal{P}(Z)$. Therefore, we can apply \cite[Prop.\@ E.11]{cohn2013measure} to obtain the first equality of this lemma. The second equality is obtained by applying twice the first one.
\end{proof}

\begin{thm}[Disintegration theorem]\label{thm:disintegration}
For any $\mu \in \mathcal{P}_{m}(Z)$, there exists a family of probability measures $\{\mu_{x \in X} \in \mathcal{P}(Y)\}_{x}$ such that for any Borel measurable function $f \colon Z \to \R_{+}$, we have
\begin{equation*}
\int_{Z} f d\mu  = \int_{X} \int_{Z_{x}} f(x, y) d\mu_{x}(y) dm(x).
\end{equation*}
Moreover, for a.e.\@ $x \in X$, $\mu_x$ is uniquely determined.
\end{thm}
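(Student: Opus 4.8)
The plan is to prove the disintegration theorem by reducing it to a standard form and then invoking the classical disintegration result for Polish spaces. Since $Z = \mathrm{Graph}(F)$ is a closed subset of the product of two Polish spaces $X \times Y$, it is itself Polish (a closed subset of a Polish space is Polish). The projection $\pi_1 \colon Z \to X$ is continuous, hence Borel measurable, and by the definition of $\mathcal{P}_m(Z)$ we have $\pi_1 \# \mu = m$. This is precisely the setup for the classical disintegration theorem: given a measure $\mu$ on a Polish space $Z$ and a measurable map $\pi_1$ into another Polish space $X$ pushing $\mu$ forward to $m$, there exists an $m$-a.e.\@ uniquely determined family $\{\mu_x\}_{x \in X} \subseteq \mathcal{P}(Z)$ such that $\mu = \int_X \mu_x \, dm(x)$ in the sense that $\int_Z f \, d\mu = \int_X \int_Z f \, d\mu_x \, dm(x)$ for every nonnegative Borel $f$.

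The key refinement I would carry out is to show that the disintegrating measures $\mu_x$ can be taken to be concentrated on the fiber $Z_x = F(x) = \{y : (x,y) \in Z\}$, so that they may be regarded as measures on $Y$ (namely on $Z_x$) rather than on all of $Z$. First I would apply the classical theorem to obtain measures $\mu_x \in \mathcal{P}(Z)$. Then I would argue that for $m$-a.e.\@ $x$, the measure $\mu_x$ is supported on the fiber $\pi_1^{-1}(\{x\}) = \{x\} \times Z_x$. This is where Lemma \ref{lm:support} becomes useful: applying the disintegration identity to the function $(x',y) \mapsto d_X(x', x) \wedge 1$ restricted appropriately, or more directly testing against indicator-type functions of the form $\mathbbm{1}_{\pi_1^{-1}(A)}$, one finds that $\mu_x$ gives full mass to the fiber over $x$ for $m$-a.e.\@ $x$. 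Identifying $\{x\} \times Z_x$ with $Z_x \subseteq Y$ via the second coordinate then lets me view $\mu_x$ as an element of $\mathcal{P}(Y)$ concentrated on $Z_x$, which yields the stated formula $\int_Z f \, d\mu = \int_X \int_{Z_x} f(x,y) \, d\mu_x(y) \, dm(x)$.

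The uniqueness claim transfers directly from the uniqueness in the classical theorem: any two disintegrations agree $m$-a.e., since they must reproduce the same measure $\mu$ against every bounded continuous test function, and a monotone class or functional-analytic argument (using separability of the relevant function spaces) upgrades this to $m$-a.e.\@ equality of the measures themselves.

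I expect the main obstacle to be the fiber-concentration step, that is, verifying cleanly that the abstractly produced $\mu_x$ live on the correct fibers so that they descend to measures on $Y$. The subtlety is measure-theoretic bookkeeping: the classical theorem produces measures on $Z$ parametrized measurably by $X$, and one must carefully exploit that $\pi_1 \# \mu = m$ together with the structure $Z \subseteq X \times Y$ to localize each $\mu_x$ over its base point. Once this is handled, the change-of-variables identity and the uniqueness statement follow routinely.
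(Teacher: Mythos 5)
Your proposal is correct and follows essentially the same route as the paper, whose entire proof is a citation to \cite[Thm.\@ 5.3.1]{ambrosio2005gradient}: that theorem is exactly the classical disintegration result you invoke, stated for a Borel map between Polish (Radon separable metric) spaces, applied here to $\pi_1 \colon Z \to X$ with $Z$ Polish as a closed subset of $X \times Y$. The fiber-concentration step you single out as the main obstacle is in fact already part of the cited theorem's statement ($\mu_x$ is concentrated on $\pi_1^{-1}(x)$ for $m$-a.e.\@ $x$, which permits the identification of $\mu_x$ with an element of $\mathcal{P}(Y)$ supported on $Z_x$), so your reduction is precisely what the paper's citation encapsulates.
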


\begin{proof}
See \cite[Thm.\@ 5.3.1]{ambrosio2005gradient}.
\end{proof}

\begin{rem}\label{rem1}
It is not difficult to generalize Theorems \ref{thm:disintegration} to functions $f$ bounded from below, by adding to $f$ a sufficient large positive constant.
\end{rem}

\section{Optimality condition} \label{sec:primal}

\subsection{Assumptions and constants}

To simplify the presentation of the assumptions and the results of the article, we introduce the following (set-valued) functions, parameterized by $\lambda \in \mathcal{H}$:
\begin{itemize}
\item  $g_{\lambda} \colon Z \to \R$ and $u_{\lambda} \colon X \to \R $,
\begin{equation*}
g_{\lambda}(x,y) = \left\langle \lambda \, , \, g(x,y)  \right\rangle, \qquad  u_{\lambda}(x) = \inf_{y \in Z_{x}} g_{\lambda}(x,y);
\end{equation*}
\item $G_{\lambda} \colon X\rightsquigarrow \R $ and $\br_{\lambda} \colon X\rightsquigarrow Y $,
\begin{equation*}
G_{\lambda} (x) = \left\{ g_{\lambda}(x,y) \, \mid\, y \in Z_x \right\}, \qquad \br_{\lambda}(x) = \argmin_{y\in Z_x} g_{\lambda}(x,y).
\end{equation*}
\end{itemize}

\begin{ass}\label{ass1}
The following holds:
\begin{enumerate}
\item
The function $g$ is bounded. The function $f$ is convex and differentiable, and $\nabla f$ is Lipschitz continuous with modulus $L$.
\item  Let $\mathcal{H}_{f} \coloneqq \nabla f(\mathcal{H})$.   Fixing any $\lambda\in \mathcal{H}_f$, we have:
\begin{itemize}
\item  the function $g_{\lambda}$ is lower semi-continuous;
\item  the set-valued function $G_{\lambda}$ is lower semi-continuous;
\item the set-valued function $\br_{\lambda}$ has non-empty images.
\end{itemize}
\end{enumerate}
\end{ass}

Three useful constants below are defined, following Assumption \ref{ass1}:
\begin{equation*}
M \coloneqq \sup_{z\in Z}\|g(z)\|, \qquad D\coloneqq \sup_{z_1,z_2\in Z} \|g(z_1) - g(z_2)\|^2, \qquad C \coloneqq \sup_{\mu\in \mathcal{P}(Z)} \left\|\nabla f\left(\int_{Z}g d\mu \right)\right\|.
\end{equation*}

We present here a lemma following Assumption \ref{ass1}. A similar result for the Lagrangian MFG is presented in \cite[Lem.\@ 3.4]{cannarsa2018existence}.

\begin{lem}\label{lm:close}
Under Assumption \ref{ass1}, for any $\lambda \in \mathcal{H}_f$, the set-valued function $\textnormal{\br}_{\lambda}$ has a closed graph.
\end{lem}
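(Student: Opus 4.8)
The plan is to verify the closed-graph property directly from the sequential characterization. I would take a sequence $(x_n,y_n)_{n\ge 1}$ in $\text{Graph}(\br_{\lambda})$ converging to some $(x,y)\in X\times Y$ and show that $(x,y)\in\text{Graph}(\br_{\lambda})$, i.e.\ $y\in\br_{\lambda}(x)$. Since each $y_n\in\br_{\lambda}(x_n)\subseteq Z_{x_n}$, the pairs $(x_n,y_n)$ lie in $Z=\text{Graph}(F)$, which is closed by the standing assumptions; hence $(x,y)\in Z$, that is $y\in Z_x$. It then remains only to prove that $y$ attains the minimal value, i.e.\ $g_{\lambda}(x,y)=u_{\lambda}(x)$, because membership $y\in Z_x$ already gives $g_{\lambda}(x,y)\ge u_{\lambda}(x)$.

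For the reverse (upper) bound I would use the lower semi-continuity of $g_{\lambda}$ from Assumption \ref{ass1}(2): combining $(x_n,y_n)\to(x,y)$ with the optimality identity $g_{\lambda}(x_n,y_n)=u_{\lambda}(x_n)$ (valid since $y_n\in\br_{\lambda}(x_n)$) yields
\[
g_{\lambda}(x,y)\ \le\ \liminf_{n\to\infty} g_{\lambda}(x_n,y_n)\ =\ \liminf_{n\to\infty} u_{\lambda}(x_n).
\]
The crux is then to show $\liminf_n u_{\lambda}(x_n)\le u_{\lambda}(x)$, which I expect to be the main obstacle, since it requires transferring information about the infima $u_{\lambda}(x_n)$ back to the limit point $x$. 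Here I would invoke the lower semi-continuity of the \emph{set-valued} map $G_{\lambda}$. Because $\br_{\lambda}(x)$ is non-empty, the infimum defining $u_{\lambda}(x)$ is attained, so $u_{\lambda}(x)\in G_{\lambda}(x)$; lower semi-continuity (Definition \ref{def1}(1)) then furnishes values $r_n\in G_{\lambda}(x_n)$ with $r_n\to u_{\lambda}(x)$. As every element of $G_{\lambda}(x_n)$ dominates $u_{\lambda}(x_n)=\inf G_{\lambda}(x_n)$, we have $u_{\lambda}(x_n)\le r_n$, whence $\liminf_n u_{\lambda}(x_n)\le \lim_n r_n=u_{\lambda}(x)$.

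Combining the two bounds gives $u_{\lambda}(x)\le g_{\lambda}(x,y)\le u_{\lambda}(x)$, so $g_{\lambda}(x,y)=u_{\lambda}(x)$ and therefore $y\in\br_{\lambda}(x)$, which proves that $\text{Graph}(\br_{\lambda})$ is closed. The only genuinely non-routine point is the middle step: although lower semi-continuity of $G_{\lambda}$ at $x$ naturally provides a sequence $r_n\in G_{\lambda}(x_n)$ approaching a prescribed element of $G_{\lambda}(x)$, one must observe that the non-emptiness of $\br_{\lambda}$ is precisely what guarantees that the optimal value $u_{\lambda}(x)$ itself lies in $G_{\lambda}(x)$, so that the selection can be anchored exactly at the minimum rather than at some nearby value; everything else reduces to bookkeeping with the closedness of $Z$ and the lower semi-continuity of $g_{\lambda}$.
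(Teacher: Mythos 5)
Your proof is correct, and it relies on the same three ingredients as the paper's proof (closedness of $Z$, lower semi-continuity of $g_{\lambda}$ along the convergent sequence, and lower semi-continuity of the set-valued map $G_{\lambda}$ to produce approximating values at the points $x_n$), but it packages them differently. The paper never mentions the value function: it fixes an arbitrary competitor $y' \in Z_{\bar{x}}$, uses the lower semi-continuity of $G_{\lambda}$ to find $\hat{y}_n \in Z_{x_n}$ with $g_{\lambda}(x_n,\hat{y}_n) \to g_{\lambda}(\bar{x},y')$, invokes optimality of $y_n$ to get $g_{\lambda}(x_n,y_n) \leq g_{\lambda}(x_n,\hat{y}_n)$, and passes to the limit to conclude $g_{\lambda}(\bar{x},\bar{y}) \leq g_{\lambda}(\bar{x},y')$ for every competitor. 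You instead route the comparison through $u_{\lambda}$: your middle step, $\liminf_n u_{\lambda}(x_n) \leq u_{\lambda}(x)$, is precisely the upper semi-continuity of $u_{\lambda}$ that the paper establishes separately as Lemma \ref{lm:lsc_Borel} in Appendix \ref{Appx:A}, proved there by the same device of anchoring the $G_{\lambda}$-selection at a point of $\br_{\lambda}(x)$; so in effect your argument factors the paper's direct proof through that appendix lemma. What your version buys is a reusable statement about the value function; what the paper's direct comparison buys is that it never needs the infimum to be attained at the limit point, whereas your anchoring of $u_{\lambda}(x) \in G_{\lambda}(x)$ uses the non-empty-images assumption on $\br_{\lambda}$ --- legitimate under Assumption \ref{ass1}, and in any case avoidable by anchoring at an $\epsilon$-minimizer and letting $\epsilon \to 0$, so this is a stylistic rather than substantive dependence.
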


\begin{proof}
Let $x_k\in X$ converge to some $\bar{x} \in X$, and let $y_k \in \br_{\lambda}(x_k)$ converge to some $\bar{y}\in Y$. We have to prove that $\bar{y} \in \br_{\lambda} (\bar{x})$. First, we have $\bar{y} \in Z_{\bar{x}}$, since $Z$ is closed. Fix any $y \in Z_{\bar{x}}$. Since $G_{\lambda}$ is lower semi-continuous, there exists a sequence $(\hat{y}_k)_{k \in \mathbb{N}}$ in $Z_{x_k}$ such that
\begin{equation*}
g_{\lambda}(\bar{x},y) = \lim_{k\to \infty} g_{\lambda}(x_k,\hat{y}_k).
\end{equation*}
By the lower semi-continuity of $g_{\lambda}$, we have
\begin{equation*}
g_{\lambda}(\bar{x},\bar{y}) \leq \liminf_{k\to \infty} g_{\lambda}(x_k, y_k).
\end{equation*}
Since $y_k \in \br_{\lambda}(x_k)$ and $\hat{y}^k \in Z_{x_k}$, we have $ g_{\lambda}(x_k, y_k) \leq  g_{\lambda}(x_k,\hat{y}_k) $ for any $k$. Passing to the limit in this inequality (using the above inequalities), we deduce that $ g_{\lambda}(\bar{x},\bar{y}) \leq   g_{\lambda}(\bar{x},y) $. Thus, $\textnormal{\br}_{\lambda}$ has a closed graph.
\end{proof}

In Section \ref{sec:dual}, we will consider the dual problem of \eqref{pb:primal}. For the analysis of Section \ref{sec:dual}, Assumption \ref{ass1} needs to be strengthened as follows:

\medskip
\noindent\textbf{Assumption A$^{*}$.} Assumption \ref{ass1}(1) holds true and Assumption \ref{ass1}(2) holds true for all $\lambda \in \text{dom}(f^{*})$, where $f^{*}$ is the Fenchel conjugate of $f$.

\begin{rem}
Assumption \textbf{A$^*$} is indeed stronger than Assumption \ref{ass1} since $\mathcal{H}_f = \nabla f(\mathcal{H}) \subseteq \text{dom}(f^{*})$. This inclusion is deduced from Fenchel's relation: $y = \nabla f(x) \Leftrightarrow f^{*}(y) = \langle x, y\rangle - f(x)$.
\end{rem}

\subsection{First-order-optimality condition}

The following lemma plays a key role in proving the first-order optimality condition for \eqref{pb:primal}.

\begin{lem}\label{lm:inf_int}
Let Assumption \ref{ass1} hold true. For any $\lambda\in \mathcal{H}_f$, we have
\begin{equation*}
\inf_{\mu\in \mathcal{P}_{m}(Z)}  \int_{Z} g_{\lambda} d  {\mu}  = \int_{X}  u_{\lambda} d  m.
\end{equation*}
\end{lem}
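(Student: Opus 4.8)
The plan is to establish the two inequalities separately. The inequality ``$\geq$'' is a direct consequence of disintegration, while the nontrivial direction ``$\leq$'' amounts to constructing an optimal $\mu$ by selecting, for $m$-a.e.\ $x$, a minimizer of $g_\lambda(x,\cdot)$ over $Z_x$; the crux is to perform this selection \emph{measurably}.

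For the easy direction, fix an arbitrary $\mu\in\mathcal{P}_m(Z)$ and disintegrate it as a family $\{\mu_x\}_{x\in X}$ via Theorem \ref{thm:disintegration}. This is legitimate because $g_\lambda=\langle\lambda,g\rangle$ satisfies $|g_\lambda|\leq\|\lambda\|M$, so $g_\lambda$ is bounded and Remark \ref{rem1} applies. Then
\[
\int_{Z}g_\lambda\,d\mu=\int_{X}\Big(\int_{Z_x}g_\lambda(x,y)\,d\mu_x(y)\Big)\,dm(x)\geq\int_{X}u_\lambda(x)\,dm(x),
\]
since $g_\lambda(x,\cdot)\geq u_\lambda(x)$ on $Z_x$ by the very definition of $u_\lambda$. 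Taking the infimum over $\mu\in\mathcal{P}_m(Z)$ yields $\inf_\mu\int_Z g_\lambda\,d\mu\geq\int_X u_\lambda\,dm$.

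The reverse inequality is the heart of the argument, and its \textbf{main obstacle} is verifying that $\br_\lambda$ is a measurable set-valued map so that the selection theorem applies. By Lemma \ref{lm:close}, $\br_\lambda$ has a closed graph; since $X$ and $Y$ are Polish, $\textnormal{Graph}(\br_\lambda)$ is a closed, hence Borel, subset of $X\times Y$, and therefore belongs to $\mathcal{B}^X\otimes\mathcal{B}^Y$. Passing to the completion $(X,\overline{\mathcal{B}^X},\bar m)$ of $(X,\mathcal{B}^X,m)$, Lemma \ref{lm:set_measurable_2} then shows that $\br_\lambda$ is measurable. Since $\br_\lambda$ has non-empty images by Assumption \ref{ass1}(2), Theorem \ref{thm:measurable_selection} provides a measurable selection $y^\ast\colon X\to Y$ with $y^\ast(x)\in\br_\lambda(x)$ for all $x$.

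To conclude, I would set $\mu^\ast\coloneqq T_\#\bar m$, the pushforward of $\bar m$ under $T\colon x\mapsto(x,y^\ast(x))$. Its first marginal is $\pi_1\#\mu^\ast=\mathrm{id}_\#\bar m=m$ on Borel sets, so $\mu^\ast\in\mathcal{P}_m(Z)$; and because $g_\lambda(x,y^\ast(x))=u_\lambda(x)$ by definition of $\br_\lambda$, one obtains
\[
\int_{Z}g_\lambda\,d\mu^\ast=\int_{X}g_\lambda(x,y^\ast(x))\,dm(x)=\int_{X}u_\lambda\,dm,
\]
whence $\inf_\mu\int_Z g_\lambda\,d\mu\leq\int_X u_\lambda\,dm$, which combined with the first inequality gives the claimed equality. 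A minor point to settle along the way is the measurability of $u_\lambda$ (so that the right-hand integral is well defined): this follows from $u_\lambda=g_\lambda(\cdot,y^\ast(\cdot))=g_\lambda\circ T$, a composition of the Borel function $g_\lambda$ with the measurable map $T$, which is measurable with respect to the completion and thus integrable against $m$.
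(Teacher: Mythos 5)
Your proposal is correct and follows essentially the same route as the paper's proof in Appendix \ref{Appx:A}: disintegration for the inequality ``$\geq$'', and, for ``$\leq$'', completion of $(X,\mathcal{B}^X,m)$, the closed graph of $\br_{\lambda}$ from Lemma \ref{lm:close}, Lemma \ref{lm:set_measurable_2} together with Theorem \ref{thm:measurable_selection} to obtain a measurable selection, and the pushforward of the completed measure (restricted to $\mathcal{B}^Z$) as the optimal $\tilde{\mu}$. The only, harmless, divergence is at the very end: where you settle for measurability of $u_{\lambda}$ with respect to the completed $\sigma$-algebra, the paper additionally proves (Lemma \ref{lm:lsc_Borel}) that $u_{\lambda}$ is upper semi-continuous, hence Borel, which makes the integral $\int_{X} u_{\lambda}\, dm$ well defined without any reference to the completion.
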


Here we present a proof of Lemma \ref{lm:inf_int} for the case where $m$ has finite support, that is, $m\in \mathcal{P}_{\delta}(X)$. This particular case provides us with insight into the general proof, and proves beneficial for resolving the discretized problem introduced in Section \ref{sec:algo}.

\begin{proof}[Proof of Lemma \ref{lm:inf_int} when $m\in \mathcal{P}_{\delta}(X)$]
Fix any $\mu \in \mathcal{P}_{m}(Z)$. Since $g$ is bounded over $Z$, the function $g_{\lambda}$ is bounded from below. By Lemma \ref{thm:disintegration} and Remark \ref{rem1}, we have
\begin{equation*}
\int_{Z} g_{\lambda} d  {\mu} = \int_{X} \int_{Z_x} g_{\lambda} (x,y) d \mu_x(y) dm(x) \geq \int_{X} u_{\lambda} dm,
\end{equation*}
where the second inequality follows from the definition of $u_{\lambda}$.

Let us prove the converse inequality. Let us fix
$m\in \mathcal{P}_{\delta}(X)$. Let $K \in \mathbb
N$, let $(x_k)_{k=1,\ldots,K} \in X^k$ and let $(\omega_k)_{k=1,\ldots,K} \in \R_+^K$ be such that $\sum_{k=1}^K \omega_k =1$ and $m = \sum_{k=1}^K \omega_k \delta_{x_k}$. For any $k=1,\ldots,K$, let $y_k \in \br_{\lambda}(x_k)$. Let us define
$\tilde{\mu}
= \sum_{k=1}^K \omega_k \delta_{(x_k,y_k)}.
$
Clearly $\tilde{\mu} \in \mathcal{P}_{m}(Z)$. Moreover,
\begin{equation*}
\int_{Z} g_\lambda d \tilde{\mu}
= \sum_{k=1}^K \omega_k
g_\lambda(x_k,y_k)
= \sum_{k=1}^K \omega_k u_\lambda(x_k)
= \int_X u_\lambda dm.
\end{equation*}
The conclusion follows, moreover, $\tilde{\mu}$ minimizes $\int_{Z} g_\lambda d\mu$ over $\mathcal{P}_m(Z)$.
\end{proof}

In the general case, one has to find a measurable selection of $\br_{\lambda}$, which requires us to prove the measurability of $\br_{\lambda}$, which cannot be done in a direct fashion. The complete proof is given in Appendix \ref{Appx:A}.

\begin{thm}[First-order optimality condition]\label{thm:first}
Let Assumption \ref{ass1}(1) hold true. Let $\bar{\mu} \in \mathcal{P}_{m}(Z)$ and $ \bar{\lambda} = \nabla f \left( \int_{Z} g  d \bar{\mu}\right)$.  Consider the following three assertions:
\begin{enumerate}
\item The measure $\bar{\mu}$ is a solution of problem \eqref{pb:primal};
\item $\int_{Z} g_{\bar{\lambda}} d \bar{\mu} = \inf_{\mu \in \mathcal{P}_m(Z)} \int_{Z} g_{\bar{\lambda}} d{\mu} $;
\item $\textnormal{supp}(\bar{\mu}_x) \subseteq \textnormal{\br}_{\bar{\lambda}}(x)$, $ m\textnormal{-}$a.e., where $\bar{\mu}_x$ is defined by the disintegration theorem.
\end{enumerate}
Then, assertions (1) and (2) are equivalent. Moreover, under Assumption \ref{ass1}(2), assertions (1), (2), and (3) are equivalent.
\end{thm}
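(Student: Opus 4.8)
The plan is to establish the two equivalences in turn: first $(1)\Leftrightarrow(2)$ using only the convexity of $f$ (Assumption \ref{ass1}(1)), and then $(2)\Leftrightarrow(3)$ under the additional Assumption \ref{ass1}(2), where Lemma \ref{lm:inf_int} does the heavy lifting. For $(1)\Leftrightarrow(2)$, the key observation is that the objective of \eqref{pb:primal} is a convex function of $\mu$: writing $A(\mu)=\int_Z g\,d\mu\in\mathcal{H}$, the map $\mu\mapsto f(A(\mu))$ is the composition of the convex $f$ with the affine map $A$, and $\mathcal{P}_m(Z)$ is convex. Hence $\bar\mu$ is optimal if and only if the first-order directional derivative is nonnegative in every feasible direction. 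Concretely, fixing $\mu\in\mathcal{P}_m(Z)$ and setting $\phi(t)=f(A((1-t)\bar\mu+t\mu))$ for $t\in[0,1]$, which is well defined since $(1-t)\bar\mu+t\mu\in\mathcal{P}_m(Z)$, the function $\phi$ is convex with $\phi'(0^+)=\langle\nabla f(A(\bar\mu)),A(\mu)-A(\bar\mu)\rangle=\langle\bar\lambda,A(\mu)-A(\bar\mu)\rangle$. By Lemma \ref{lm:Bochner} the inner product commutes with the integral, so $\phi'(0^+)=\int_Z g_{\bar\lambda}\,d\mu-\int_Z g_{\bar\lambda}\,d\bar\mu$. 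If $\bar\mu$ is optimal then $\phi(t)\geq\phi(0)$ forces $\phi'(0^+)\geq0$, which is exactly $(2)$; conversely, $\phi'(0^+)\geq0$ together with convexity of $\phi$ gives $\phi(1)\geq\phi(0)+\phi'(0^+)\geq\phi(0)$, i.e. $f(A(\mu))\geq f(A(\bar\mu))$ for all $\mu$, which is $(1)$.

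For $(2)\Leftrightarrow(3)$, I would first rewrite the infimum in $(2)$ using Lemma \ref{lm:inf_int}, which applies since $\bar\lambda=\nabla f(A(\bar\mu))\in\mathcal{H}_f$: assertion $(2)$ becomes $\int_Z g_{\bar\lambda}\,d\bar\mu=\int_X u_{\bar\lambda}\,dm$. I would then disintegrate $\bar\mu$ along its first marginal $m$ via Theorem \ref{thm:disintegration} (shifting $g_{\bar\lambda}$ by a constant to make it nonnegative, as in Remark \ref{rem1}) to obtain $\int_Z g_{\bar\lambda}\,d\bar\mu=\int_X\big(\int_{Z_x}g_{\bar\lambda}(x,y)\,d\bar\mu_x(y)\big)dm(x)$. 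Since $g_{\bar\lambda}(x,\cdot)\geq u_{\bar\lambda}(x)$ on $Z_x$ by the very definition of $u_{\bar\lambda}$, the inner integral dominates $u_{\bar\lambda}(x)$ pointwise; therefore the equality of the two $X$-integrals is equivalent to the pointwise equality $\int_{Z_x}g_{\bar\lambda}(x,y)\,d\bar\mu_x(y)=u_{\bar\lambda}(x)$ for $m$-a.e.\@ $x$.

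The step I expect to be the most delicate is converting this pointwise integral equality into the support condition $(3)$, and this is where Assumption \ref{ass1}(2) is essential. For $m$-a.e.\@ $x$, the nonnegative Borel function $F_x(y):=g_{\bar\lambda}(x,y)-u_{\bar\lambda}(x)$ has zero integral against $\bar\mu_x$, so the first part of Lemma \ref{lm:support} yields $F_x=0$ $\bar\mu_x$-a.e. To upgrade this to a statement about $\textnormal{supp}(\bar\mu_x)$, I would use that $g_{\bar\lambda}$ is lower semi-continuous under Assumption \ref{ass1}(2), hence $F_x$ is lower semi-continuous and its zero set $F_x^{-1}(\{0\})=\br_{\bar\lambda}(x)$ is closed; the second part of Lemma \ref{lm:support} then gives $\textnormal{supp}(\bar\mu_x)\subseteq\br_{\bar\lambda}(x)$, which is $(3)$. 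The converse $(3)\Rightarrow(2)$ is easier: if $\bar\mu_x$ is concentrated on $\br_{\bar\lambda}(x)$, using $\bar\mu_x(\textnormal{supp}(\bar\mu_x))=1$ (valid since $Y$ is Polish), then $g_{\bar\lambda}(x,\cdot)=u_{\bar\lambda}(x)$ holds $\bar\mu_x$-a.e., the inner integral equals $u_{\bar\lambda}(x)$ for $m$-a.e.\@ $x$, and integrating over $m$ recovers $(2)$ through Lemma \ref{lm:inf_int}. The recurring technical obstacle is guaranteeing the measurability of $x\mapsto u_{\bar\lambda}(x)$ and of the disintegrated inner integrals, so that the passage between pointwise and integral equalities is legitimate; since this measurability is precisely what underlies Lemma \ref{lm:inf_int}, I would invoke that lemma rather than re-derive it here.
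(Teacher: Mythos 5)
Your proposal is correct and follows essentially the same route as the paper: the equivalence $(1)\Leftrightarrow(2)$ is the standard first-order condition for the convex composite objective (the paper derives it from the Lipschitz quadratic bound on $f$ and lets the step size tend to $0$, while you use convexity and the one-sided derivative of the scalar restriction $\phi$ --- an equivalent argument), and $(2)\Leftrightarrow(3)$ proceeds exactly as in the paper via Lemma \ref{lm:inf_int}, the disintegration theorem, and a double application of Lemma \ref{lm:support}, with lower semi-continuity of $g_{\bar\lambda}$ ensuring the zero set $\br_{\bar\lambda}(x)$ is closed. Your deferral of the measurability issues to Lemma \ref{lm:inf_int} is also consistent with the paper, which isolates precisely that difficulty in Appendix \ref{Appx:A}.
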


\begin{proof}
\textbf{Step 1}. (Equivalence between $(1)$ and $(2)$). We first prove that $(1)\Rightarrow (2)$. Suppose that $\bar{\mu}$ is a solution of problem \eqref{pb:primal}. Take an arbitrary $\mu\in \mathcal{P}_{m}(Z)$. Then, for any $\alpha\in[0,1]$, we have
\begin{equation*}
\begin{split}
f \left( \int_{Z} g d \bar{\mu} \right) &\leq {} f \left( \int_{Z} g  d (\bar{\mu} + \alpha(\mu - \bar{\mu}) ) \right)\\
& \leq {} f \left( \int_{Z} g d \bar{\mu} \right) + \alpha \left\langle \bar{\lambda}\, , \,  \int_{Z} g d (\mu - \bar{\mu} ) \right\rangle + \frac{\alpha^2 LD}{2},
\end{split}
\end{equation*}
where the second inequality follows from the Lipschitz-continuity of $\nabla f$ and the definition of $D$. Therefore
\begin{equation*}
0 \leq \left\langle \bar{\lambda}\, , \,  \int_{Z} g d (\mu - \bar{\mu} ) \right\rangle + \frac{\alpha LD}{2}
\end{equation*}
Let $\alpha$ go to $0$. We obtain that
\begin{equation}\label{eq:first_order_1}
\left\langle \bar{\lambda}\, , \,  \int_{Z} g d  \bar{\mu}  \right\rangle = \inf_{\mu\in \mathcal{P}_{m}(Z)}  \left\langle \bar{\lambda}\, , \,  \int_{Z} g d  {\mu}  \right\rangle.
\end{equation}
This implies $(2)$ by the definition of $g_{\bar{\lambda}}$.

We now prove $(2)\Rightarrow (1)$. Let $(2)$ hold true. We obtain \eqref{eq:first_order_1} by the definition of $g_{\bar{\lambda}}$. The convexity of $f$ implies that for any $\mu \in \mathcal{P}_{m}(Z)$,
\begin{equation*}
f \left( \int_{Z} g d {\mu} \right) \geq f \left( \int_{Z} g d \bar{\mu} \right) + \left\langle \bar{\lambda} \, , \int_{Z} g d {\mu} - \int_{Z} g d \bar{\mu} \right\rangle \geq f \left( \int_{Z} g d \bar{\mu}\right).
\end{equation*}
Therefore, $\bar{\mu}$ is a solution of problem \eqref{pb:primal}.

\textbf{Step 2}. (Equivalence between $(2)$ and $(3)$).
By Theorem \ref{thm:disintegration}, we have
\begin{equation*}
\int_{Z} g_{\bar{\lambda}} d\bar{\mu} =
\int_{X}\int_{Z_x} g_{\bar{\lambda}}(x,y) d\bar{\mu}_x (y) dm(x).
\end{equation*}
By Lemma \ref{lm:inf_int}, we have
\begin{equation*}
\inf_{\mu\in \mathcal{P}_{m}(Z)}  \int_{Z} g_{\bar{\lambda}} d  {\mu}  = \int_{X}  u_{\bar{\lambda}} d  m.
\end{equation*}
Therefore, assertion $(2)$ is equivalent to
\begin{equation}\label{eq:first_order_2}
\int_{X}\int_{Z_x} g_{\bar{\lambda}}(x,y) d\bar{\mu}_x (y) dm(x)= \int_{X} u_{\bar{\lambda}} dm.
\end{equation}

Let $(3)$ hold true. It follows that $ \int_{Z_x} g_{\bar{\lambda}}(x,y) d\bar{\mu}_x (y)=  u_{\bar{\lambda}}(x)$, $ m $-a.e., which implies \eqref{eq:first_order_2}.

Let $(2)$ hold true. We obtain \eqref{eq:first_order_2}.
The function $x \mapsto \big( \int_{Z_x} g_{\bar{\lambda}}(x,y) d\bar{\mu}_x (y) \big) - u_{\bar{\lambda}}(x)$ is nonnegative, for a.e.\@ $x \in X$, by the definition of $u_{\bar{\lambda}}$. By \eqref{eq:first_order_2}, its integral is null, thus, as a consequence of Lemma \ref{lm:support}, we have
\begin{equation} \label{eq:support_relation}
\int_{Z_x} g_{\bar{\lambda}}(x,y) d\bar{\mu}_x (y)= u_{\bar{\lambda}}(x) = \inf_{y\in Z_x} g_{\bar{\lambda}}(x,y), \qquad m\text{-a.e.}
\end{equation}
Fix $x \in X$ such that equality holds in \eqref{eq:support_relation}. Consider the map $y \in Z_x \mapsto g_{\bar{\lambda}}(x,y) - u_{\bar{\lambda}}(x)$. It is nonnegative, with a null integral, and $\br_{\bar{\lambda}}(x)$ is non-empty and closed. Then assertion (3) follows with Lemma \ref{lm:support}.
\end{proof}

\begin{cor}\label{cor:fixed_point}
Under Assumption \ref{ass1}, $\bar{\mu}$ is a solution of \eqref{pb:primal} if and only if the following equilibrium equation is satisfied:
\begin{equation}\label{pb:fix}
\begin{cases}
\ \bar{\lambda} = \nabla f \left(  \int_{Z} g d\bar{\mu} \right),\\[0,6em]
\ \textnormal{supp}(\bar{\mu}_x) \subseteq \textnormal{\br}_{\bar{\lambda}}(x), \quad $m$\text{-a.e.}
\end{cases}
\end{equation}
\end{cor}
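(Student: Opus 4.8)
The plan is to read off \eqref{pb:fix} as a compact reformulation of the equivalence between assertions (1) and (3) of Theorem \ref{thm:first}, so that the corollary follows with essentially no new work beyond checking that the quantifier on $\bar{\lambda}$ is handled correctly. Under Assumption \ref{ass1} both part (1) and part (2) hold, which is precisely the hypothesis under which Theorem \ref{thm:first} asserts that assertions (1), (2) and (3) are all equivalent; in particular $(1)\Leftrightarrow(3)$.

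First I would fix $\bar{\mu}\in\mathcal{P}_m(Z)$ and set $\bar{\lambda}=\nabla f\big(\int_Z g\,d\bar{\mu}\big)$, which is exactly the definition of $\bar{\lambda}$ adopted in the statement of Theorem \ref{thm:first}. With this choice, the first line of the system \eqref{pb:fix} holds by construction, and the second line of \eqref{pb:fix} is verbatim assertion (3), namely $\textnormal{supp}(\bar{\mu}_x)\subseteq\textnormal{\br}_{\bar{\lambda}}(x)$ for $m$-a.e.\ $x$. For the forward implication, if $\bar{\mu}$ solves \eqref{pb:primal} then assertion (1) holds, hence by Theorem \ref{thm:first} assertion (3) holds; together with the defining identity $\bar{\lambda}=\nabla f\big(\int_Z g\,d\bar{\mu}\big)$ this is precisely \eqref{pb:fix}. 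Conversely, if \eqref{pb:fix} holds, then its first line forces $\bar{\lambda}$ to coincide with the quantity $\nabla f\big(\int_Z g\,d\bar{\mu}\big)$ appearing in Theorem \ref{thm:first}, so its second line is assertion (3); invoking the implication $(3)\Rightarrow(1)$ yields that $\bar{\mu}$ is a solution of \eqref{pb:primal}.

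The only point requiring a little care — and the closest thing to an obstacle — is the status of $\bar{\lambda}$ as a variable inside \eqref{pb:fix}. Because the first line determines $\bar{\lambda}$ uniquely in terms of $\bar{\mu}$, the system is not an existence statement over $\bar{\lambda}$ but rather a constraint on $\bar{\mu}$ alone, once $\bar{\lambda}$ is eliminated via that identity. I would make this elimination explicit so that the stated equivalence matches exactly the $(1)\Leftrightarrow(3)$ part of Theorem \ref{thm:first}, in which $\bar{\lambda}$ is defined by the very same formula. All the substantive content — the disintegration step and the support characterization through Lemma \ref{lm:support} — has already been carried out in the proof of Theorem \ref{thm:first}, so nothing beyond this bookkeeping is required.
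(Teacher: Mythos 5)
Your proposal is correct and matches the paper exactly: the paper's entire proof of this corollary is that it is a consequence of Theorem \ref{thm:first}, which is precisely the $(1)\Leftrightarrow(3)$ reading you give. Your additional remark that the first line of \eqref{pb:fix} eliminates $\bar{\lambda}$ so the system is a constraint on $\bar{\mu}$ alone is sound bookkeeping, implicit in the paper's one-line proof.
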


\begin{proof}
This is a consequence of Theorem \ref{thm:first}.
\end{proof}

The conditions in \eqref{pb:fix} can be interpreted as the conditions for a Nash equilibrium in an non-atomic game, in which the agents through the variable $\bar{\gamma}$. The relation $\bar{\lambda}= \nabla f(\int_Z g d \bar{\mu})$ shows how $\bar{\lambda}$ results from the collective behavior of the agents, while the relation $\text{supp}(\bar{\mu}_x) \subseteq \textnormal{\br}_{\bar{\lambda}}(x)$ shows that the agents behave optimally, for some criterion that depends on $\bar{\lambda}$.
We will discuss some more concrete examples in Section \ref{sec:examples}.


\subsection{Existence of a solution under tightness assumptions}

We denote by $\val\eqref{pb:primal}$ the value of problem \eqref{pb:primal}. We can easily deduce from Assumption \ref{ass1} that $\val\eqref{pb:primal} > -\infty$. The following proposition demonstrates the existence of a solution to problem \eqref{pb:primal} under some additional assumptions.

\begin{prop}[Existence]\label{prop:existence}
Let Assumption \ref{ass1} hold true.
Let $(\mu_n)_{n\geq 1}$ be a minimizing sequence for problem \eqref{pb:primal}. Suppose that $\{\mu_n\}_{n\geq 1}$ is tight in $\mathcal{P}(Z)$, i.e.
for any $\epsilon >0$, there exists a compact subset $K_{\epsilon}$ of $Z$ such that
\begin{equation*}
\mu_n(K_{\epsilon}) \geq 1 - \epsilon, \qquad \forall n\geq 1.
\end{equation*}
Then every accumulation point of $\{\mu_n\}_{n \geq 1}$ for the narrow topology (there exists at least one) is a solution of \eqref{pb:primal}.
\end{prop}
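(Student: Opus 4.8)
The plan is to combine Prokhorov's theorem, the lower semicontinuity of the scalar functions $g_\lambda$ granted by Assumption \ref{ass1}(2), and the convexity of $f$. Throughout, recall that $Z$ is Polish (a closed subset of a product of two Polish spaces), so the narrow topology on $\mathcal{P}(Z)$ is metrizable and accumulation points coincide with subsequential limits.

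First I would settle existence and admissibility of accumulation points. By tightness and Prokhorov's theorem, $\{\mu_n\}_{n\geq 1}$ is relatively compact for the narrow topology, so at least one accumulation point exists. Let $\bar\mu$ be any accumulation point and let $(\mu_{n_k})_k$ be a subsequence converging narrowly to $\bar\mu \in \mathcal{P}(Z)$. Since $\pi_1$ is continuous, $\pi_1\#\mu_{n_k}$ converges narrowly to $\pi_1\#\bar\mu$; as $\pi_1\#\mu_{n_k} = m$ for all $k$ and the narrow topology is Hausdorff, $\pi_1\#\bar\mu = m$, i.e. $\bar\mu \in \mathcal{P}_m(Z)$. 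By Lemma \ref{lm:Bochner}, $b \coloneqq \int_Z g\,d\bar\mu$ is well defined because $g$ is bounded.

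The core step is to prove $f(b) \leq \val\eqref{pb:primal}$. Put $\bar\lambda \coloneqq \nabla f(b) \in \mathcal{H}_f$ and $a_k \coloneqq \int_Z g\,d\mu_{n_k}$. Convexity of $f$ gives, for every $k$,
$$ f(a_k) \geq f(b) + \langle \bar\lambda,\, a_k - b\rangle. $$
By Lemma \ref{lm:Bochner}, $\langle \bar\lambda, a_k\rangle = \int_Z g_{\bar\lambda}\,d\mu_{n_k}$ and $\langle\bar\lambda, b\rangle = \int_Z g_{\bar\lambda}\,d\bar\mu$. Since $\bar\lambda \in \mathcal{H}_f$, Assumption \ref{ass1}(2) makes $g_{\bar\lambda}$ lower semicontinuous, and it is bounded because $g$ is; the standard lower semicontinuity of integration under narrow convergence then yields $\liminf_k \int_Z g_{\bar\lambda}\,d\mu_{n_k} \geq \int_Z g_{\bar\lambda}\,d\bar\mu$, that is $\liminf_k \langle \bar\lambda, a_k\rangle \geq \langle\bar\lambda, b\rangle$. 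Taking $\liminf_k$ in the convexity inequality, and using that $(\mu_{n_k})_k$ is still minimizing so that $f(a_k) \to \val\eqref{pb:primal}$, we obtain $\val\eqref{pb:primal} \geq f(b)$.

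Since $\bar\mu \in \mathcal{P}_m(Z)$ is admissible, the reverse inequality $f(b) \geq \val\eqref{pb:primal}$ holds by definition of the infimum; hence $f(b) = \val\eqref{pb:primal}$ and $\bar\mu$ solves \eqref{pb:primal}. I expect the main obstacle to be that $g$ is merely Borel measurable and bounded rather than continuous, so $a_k = \int_Z g\,d\mu_{n_k}$ need not converge to $b = \int_Z g\,d\bar\mu$. The device that bypasses this is to linearize $f$ at $b$ \emph{before} passing to the limit: this replaces the vector-valued integral by the single scalar integrand $g_{\bar\lambda}$, for which Assumption \ref{ass1}(2) supplies precisely the one-sided $\liminf$ inequality needed, with convexity doing the rest.
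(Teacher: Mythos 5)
Your proposal is correct and follows essentially the same route as the paper's proof: linearize $f$ at $\int_Z g\,d\bar\mu$ with $\bar\lambda = \nabla f(\int_Z g\,d\bar\mu)$, use the lower semicontinuity and boundedness of $g_{\bar\lambda}$ from Assumption \ref{ass1}(2) to get the one-sided $\liminf$ inequality under narrow convergence, and conclude by convexity and the minimizing property. The only (harmless) variation is in the admissibility step, where you verify $\pi_1\#\bar\mu = m$ directly via continuity of the push-forward, whereas the paper cites the narrow closedness of $\mathcal{P}_m(Z)$.
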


\begin{proof}
By \textit{Prokhorov's} theorem \cite[p.\@ 43]{villani2009}, the set $\{\mu_n\}_{n\geq 1}$ is relatively compact with respect to the narrow topology. Without loss of generality, suppose that $\mu_n$ narrowly converges to some $\bar{\mu} \in \mathcal{P}(Z)$.
The set $\mathcal{P}_m(Z)$ is closed with respect to narrow topology by \cite[Prop.\@ 2.4]{santambrogio2021cucker}. This implies that
$\bar{\mu} \in \mathcal{P}_m(Z)$. Let $\bar{\lambda} = \nabla f(\int_{Z} g d\bar{\mu})$. Since $f$ is convex, we have
\begin{equation}\label{eq:quad}
f\left( \int_Z g d\mu_n \right) \geq f\left( \int_Z g d\bar{\mu} \right) + \int_{Z} g_{\bar{\lambda}} d (\mu_n - \bar{\mu}).
\end{equation}
Since $g_{\bar{\lambda}}\colon Z \to \R$ is lower semi-continuous and bounded from below by Assumption \ref{ass1}, we deduce the following inequality from \cite[Lem.\@ 4.3]{villani2009}:
\begin{equation*}
\liminf_{n\to +\infty}\int_{Z} g_{\bar{\lambda}} d (\mu_n - \bar{\mu}) \geq 0.
\end{equation*}
In inequality \eqref{eq:quad}, letting $n$ go to infinity, by the definition of $\mu_n$, we have
\begin{equation*}
\val\eqref{pb:primal} = \liminf_{n\to +\infty} f\left( \int_Z g d\mu_n \right) \geq f\left( \int_Z g d \bar{\mu} \right) \geq \val\eqref{pb:primal}.
\end{equation*}
Therefore, $\bar{\mu}$ is a solution of problem \eqref{pb:primal}.
\end{proof}


\section{Stability analysis and bridging method} \label{sec:stable}

In this section, we study the stability of the primal problem \eqref{pb:primal} with respect to its parameter $m$.
We need the following assumptions (recall the data setting introduced in Sec.\@ \ref{sec:setting}).

\begin{ass}\label{ass2} The following holds:
\begin{enumerate}
\item The space $X$ is a closed subset of a separable Banach space;
\item The function $g\colon Z\to \mathcal{H}$ is continuous;
\item The set $Z_x$ is compact for any $x\in X$ and the set-valued function $F\colon X \rightsquigarrow Y$ is upper semi-continuous;
\item There exists $L_g \geq 0$ such that the set-valued function
\begin{equation} \label{eq:def_Z}
\mathcal{Z}\colon X \rightsquigarrow \mathcal{H},\
x\mapsto \{g(x,y) \mid y\in Z_x\}
\end{equation}
is $L_g$-Lipschitz on $X$.
\end{enumerate}
\end{ass}

Let $m_0$ and $m_1$ lie in $\mathcal{P}(X)$. We consider the following two instances of \eqref{pb:primal} with $m= m_0$ and $m=m_1$ respectively:
\begin{align}
\inf_{\mu \in \mathcal{P}_{m_0}(Z)}  f \left( \int_{Z} g d \mu  \right); \label{pb:primal_0}\tag{P$_{m_0}$}\\
\inf_{\mu \in \mathcal{P}_{m_1}(Z)}  f \left( \int_{Z} g d \mu  \right). \label{pb:primal_1}\tag{P$_{m_1}$}
\end{align}

Suppose that we have an (approximate) solution of problem \eqref{pb:primal_0}, denoted by $\bar{\mu}_{0}$. Our goal is to propose a feasible approach for recovering an approximate solution of problem \eqref{pb:primal_1} from $\bar{\mu}_{0}$ and to study the performance of this approximation. We call it the bridging method. It relies on $\bar{\mu}_0$ and the solution of the optimal transport problem \eqref{pb:OT1} stated later. To introduce \eqref{pb:OT1}, we need to define some projection operators

Recall that $\pi_1\colon Z \to X$, $(x,y)\mapsto x$, and $\pi_2\colon Z \to X$, $(x,y)\mapsto y$. The other projection operators used in this subsection are defined as:
\begin{align*}
&\tilde{\pi}_1: X\times X \to X,\, (x,x')\mapsto x, \quad  \tilde{\pi}_2: X\times X \to X, \, (x,x')\mapsto x', \quad  \pi_3 \colon Z\times X \to X, (x,y,x')\mapsto x',
\\
&\pi_{12} \colon Z\times X \to Z,\,  (x,y,x')\mapsto (x,y),  \qquad \pi_{13} \colon Z\times X \to Z,\,  (x,y,x')\mapsto (x,x').
\end{align*}
It directly follows from the above definitions that
\begin{align*}
\tilde{\pi}_1\circ \pi_{13} = \pi_1\circ \pi_{12} \quad
\text{and} \quad
\tilde{\pi}_2\circ\pi_{13} = \pi_3.
\end{align*}

Now we consider the following optimal transport problem:
\begin{equation}\label{pb:OT1}\tag{OT1}
\inf_{\rho \in \Pi(m_0,m_1)}  \int_{X \times X} d_{X}(x,x') d\rho(x,x'),
\end{equation}
where $\Pi(m_0,m_1) = \left\{ \rho \in \mathcal{P}(X \times X)\, \mid \, \tilde{\pi}_{1} \# \rho = m_0, \, \tilde{\pi}_2 \# \rho = m_1  \right\}$.
It follows from \cite[Rem.\@ 6.5]{villani2009} that if $m_0$ and $m_1$ lie in $\mathcal{P}^1(X)$, then $d_{1}(m_0,m_1) = \val\eqref{pb:OT1}$.

The following particular example will provide an intuitive understanding of our bridging method.

\textbf{A particular case}. Let us assume that the distributions $m_0$, $m_1$, and $\bar{\mu}_0$ are empirical distributions with supports of size $N$, i.e., there exists $(x_i)_{i=1}^N, (\tilde{x}_i)_{i=1}^N \in X^N$ and $(y_i)_{i=1}^N\in \prod_{i=1}^N Z_{x_i}$ such that
\begin{equation}\label{eq:empirical}
m_0 = \frac{1}{N}\sum_{i=1}^N \delta_{x_i}, \qquad m_1 = \frac{1}{N}\sum_{i=1}^N \delta_{\tilde{x}_i}, \qquad
\bar{\mu}_0 = \frac{1}{N}\sum_{i=1}^N \delta_{(x_i,y_i)}.
\end{equation}

\begin{lem}\label{lm:rho}
Let $m_0$ and $m_1$ be defined by \eqref{eq:empirical}. Then problem \eqref{pb:OT1} has a solution
\begin{equation}\label{eq:empirical1}
\rho = \frac{1}{N}\sum_{i=1}^N \delta_{(x_i,x_i')},
\end{equation}
where $\{x_1', \ldots, x_N'\}$ is a permutation of $\{\tilde{x}_1,\ldots, \tilde{x}_N\}$.
\end{lem}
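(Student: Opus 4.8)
The plan is to reduce the optimal transport problem \eqref{pb:OT1} with empirical marginals to a finite-dimensional assignment problem and to exhibit the claimed permutation as an optimal assignment. First I would recall that since $m_0 = \frac{1}{N}\sum_{i=1}^N \delta_{x_i}$ and $m_1 = \frac{1}{N}\sum_{i=1}^N \delta_{\tilde{x}_i}$ are both supported on finite sets, any admissible coupling $\rho \in \Pi(m_0, m_1)$ is necessarily supported on the finite product grid $\{x_i\} \times \{\tilde{x}_j\}$, so $\rho$ is determined by a doubly-stochastic-type matrix $\pi \in \R_+^{N\times N}$ with entries $\pi_{ij} = \rho(\{(x_i, \tilde{x}_j)\})$ subject to the row-sum and column-sum constraints $\sum_j \pi_{ij} = \frac{1}{N}$ and $\sum_i \pi_{ij} = \frac{1}{N}$. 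The cost then reads $\int d_X \, d\rho = \sum_{i,j} \pi_{ij} \, d_X(x_i, \tilde{x}_j)$, a linear function of $\pi$ over the convex polytope cut out by these marginal constraints.

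Next I would invoke the classical Birkhoff--von Neumann theorem: after rescaling by $N$, the feasible set $\{N\pi\}$ is exactly the Birkhoff polytope of doubly stochastic matrices, whose extreme points are the permutation matrices. Since the objective is linear and the feasible set is a compact polytope, the infimum is attained at an extreme point, hence at a permutation matrix $N\pi = P_\sigma$ for some permutation $\sigma$ of $\{1,\ldots,N\}$. Such a minimizer corresponds precisely to a coupling of the form $\rho = \frac{1}{N}\sum_{i=1}^N \delta_{(x_i, \tilde{x}_{\sigma(i)})}$, which matches the announced structure \eqref{eq:empirical1} upon setting $x_i' = \tilde{x}_{\sigma(i)}$. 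This establishes that \eqref{pb:OT1} admits a solution of the stated permutation form.

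I would then verify directly that the proposed $\rho = \frac{1}{N}\sum_{i=1}^N \delta_{(x_i, x_i')}$ is indeed admissible: its first marginal is $\tilde{\pi}_1 \# \rho = \frac{1}{N}\sum_i \delta_{x_i} = m_0$, and its second marginal is $\tilde{\pi}_2 \# \rho = \frac{1}{N}\sum_i \delta_{x_i'} = \frac{1}{N}\sum_i \delta_{\tilde{x}_i} = m_1$, using that $\{x_1',\ldots,x_N'\}$ is a permutation of $\{\tilde x_1,\ldots,\tilde x_N\}$. Thus membership in $\Pi(m_0,m_1)$ is immediate, and the optimality comes from the extreme-point argument above.

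The main obstacle, to the extent there is one, is the justification that every admissible coupling is supported on the finite grid and hence representable by a matrix $\pi$: this uses that $m_0$ and $m_1$ have finite support, so $\rho$ must assign zero mass to $(X\times X) \setminus (\{x_i\}\times\{\tilde{x}_j\})$, which follows because the marginals vanish off the support points. A minor subtlety worth flagging is the possibility of repeated points among the $x_i$ or the $\tilde{x}_i$, in which case the Birkhoff correspondence must be understood at the level of the index set $\{1,\ldots,N\}$ rather than the geometric support; the argument is cleanest when phrased in terms of indices, and the conclusion that some permutation of the indices is optimal remains valid. Everything else is the standard linear-programming fact that a linear functional on the Birkhoff polytope attains its minimum at a permutation matrix.
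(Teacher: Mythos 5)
Your proof is correct and follows essentially the same route as the paper, which simply cites \cite[Prop.\@ 2.1]{peyre2019computational} — a result whose proof is exactly your reduction to the assignment linear program plus the Birkhoff–von Neumann extreme-point argument. Your explicit handling of the finite-support reduction and the repeated-points subtlety (working at the level of indices) fills in the details the citation leaves implicit, but it is the same argument.
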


\begin{proof}
This is a consequence of \cite[Prop.\@ 2.1]{peyre2019computational}.
\end{proof}

Let $\rho$ be given by Lemma \ref{lm:rho}. By Assumption \ref{ass2}(4), for any $i$, there exists $y_i'\in Z_{x_i'}$ such that
\begin{equation}\label{eq:g}
\|g(x_i',y_i') - g(x_i,y_i)\|\leq L_g d_X(x_i,x_i').
\end{equation}
In our bridging method, each $x_i$ is transformed to $x_i'$ while simultaneously moving $y_i$ to the point $y_i' \in Z_{x_i'}$ for $i=1,\ldots,N$. This can be expressed as follows:
\begin{equation} \label{eq:recovery_easy}
\bar{\mu}_0 = \frac{1}{N}\sum_{i=1}^N \delta_{(x_i,y_i)} \longrightarrow \mu_1 = \frac{1}{N}\sum_{i=1}^N \delta_{(x'_i,y'_i)}.
\end{equation}

To provide a clearer formula of the construction of $\mu_1$, we introduce the empirical distribution $\nu_N \in \mathcal{P}(Z\times X)$ and the mapping $s_N \colon \{(x_i,y_i,x_i')\}_{i=1}^N \to \{(x_i,y_i')\}_{i=1}^N$, defined as:
\begin{align*}
&\nu_N = \frac{1}{N}\sum_{i=1}^N \delta_{(x_i,y_i,x_i')}, \\
&s_N(x_i,y_i,x_i') = (x_i',y_i'),\quad \forall i=1,2,\ldots, N.
\end{align*}
It can be observed that $\pi_{12}\#\nu_N = \bar{\mu}_0$ and $\pi_3\#\nu_N = m_1$. Furthermore, we will demonstrate later in Lemma \ref{lm:OT} that $\nu_N$ is a solution of another optimal transport problem \eqref{pb:OT}.
Then the approximate solution $\mu_1$ of problem \eqref{pb:primal_1} can be written as:
\begin{equation*}
\mu_1 = s_N\# \nu_N.
\end{equation*}
The distribution $\mu_1$ belongs to $\mathcal{P}_{m_1}(Z)$, and furthermore,
\begin{equation*}
\begin{split}
\left\| \int_{Z}gd\bar{\mu}_0 - \int_{Z} g d\mu_1 \right\| & ={}  \left\| \frac{1}{N}\sum_{i=1}^N \big(g(x_i,y_i) - g(x_i',y_i') \big)\right\|
\\
& \leq {} \frac{L_g}{N}\sum_{i=1}^N \|g(x_i,y_i) - g(x_i',y_i')\| \\
&\leq {} \frac{L_g}{N}\sum_{i=1}^N d_X(x_i,x_i') = L_g d_1(m_0,m_1),
\end{split}
\end{equation*}
where the second line follows from the triangle inequality and the third line follows from \eqref{eq:g} and Lemma \ref{lm:rho}. The above inequality demonstrates that the distance between the aggregates associated with $\bar{\mu}_0$ and $\mu_1$ is controlled by the $d_1$-distance of $m_0$ and $m_1$.

\textbf{The general case}. To investigate the stability and present the bridging method in the general case, we draw inspiration from the constructions of $\nu_N$ and $s_N$ in the previous particular case and introduce the following:
\begin{itemize}
\item the auxiliary optimal transport problem:
\begin{equation}\label{pb:OT}\tag{OT2}
\inf_{\nu \in \Pi(\bar{\mu}_0, m_1)} \int_{Z\times X} d_{X}(x,x') d\nu(x,y,x'),
\end{equation}
where $\Pi(\bar{\mu}_0, m_1) \coloneqq \left\{ \nu \in \mathcal{P}(Z \times X)\, \mid \, \pi_{12} \# \nu = \bar{\mu}_0, \, \pi_3 \# \nu = m_1  \right\}$;
\item the set-valued function $S \colon Z\times X \rightsquigarrow  Z$,
\begin{equation}\label{eq:S}
S(x,y,x') = \left\{ (x', y') \in Z \, \mid \, \|g(x', y') -  g(x,y)\|
\leq L_g  d_{X}(x, x') \right\}.
\end{equation}
\end{itemize}

Note that problems \eqref{pb:OT} and \eqref{pb:OT1} are similar in so far as the integrand of the cost function is the same, moreover, the second marginal of $\nu$ in \eqref{pb:OT} (resp.\@ $\rho$ in \eqref{pb:OT1}) must be equal to $m_1$.
The following lemma shows the equivalence between problems \eqref{pb:OT1} and \eqref{pb:OT}. We will see that the solution of \eqref{pb:OT} will play the role of $\nu_N$ in the particular case mentioned earlier.
\begin{lem}\label{lm:OT}
If $m_0$ and $m_1$ lie in $\mathcal{P}^1(X)$, then both problems \eqref{pb:OT1} and \eqref{pb:OT} have solutions, moreover,
\begin{equation*}
\textnormal{\val} \eqref{pb:OT1} =  \textnormal{\val} \eqref{pb:OT} = d_1(m_0,m_1).
\end{equation*}
\end{lem}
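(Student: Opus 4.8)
The plan is to prove the chain of equalities
$\val\eqref{pb:OT1} = \val\eqref{pb:OT} = d_1(m_0,m_1)$
by first invoking the previously stated result that $d_1(m_0,m_1) = \val\eqref{pb:OT1}$ (recalled from \cite[Rem.\@ 6.5]{villani2009} for $m_0, m_1 \in \mathcal{P}^1(X)$), so the substantive content is the middle equality, together with the existence of minimizers. First I would establish existence of a solution to \eqref{pb:OT1}: since $m_0, m_1 \in \mathcal{P}^1(X)$, the cost $(x,x') \mapsto d_X(x,x')$ is lower semi-continuous and the feasible set $\Pi(m_0,m_1)$ is tight and narrowly closed, so the direct method of the calculus of variations applies (this is the standard existence theorem for optimal transport, e.g.\@ \cite[Thm.\@ 4.1]{villani2009}). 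The analogous argument gives existence for \eqref{pb:OT} once we know its feasible set is nonempty and the value is finite.

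The heart of the proof is a two-way comparison between the values of \eqref{pb:OT1} and \eqref{pb:OT}, built on a transfer of feasible points between the two problems. In one direction, I would take a solution $\nu$ of \eqref{pb:OT} and push it forward to a plan for \eqref{pb:OT1}. Concretely, set $\rho \coloneqq (\pi_1\circ\pi_{12}, \pi_3)\#\nu = \pi_{13}\#\nu$, using the identities $\tilde\pi_1\circ\pi_{13} = \pi_1\circ\pi_{12}$ and $\tilde\pi_2\circ\pi_{13} = \pi_3$ recorded just before the statement. Then $\tilde\pi_1\#\rho = \pi_1\#(\pi_{12}\#\nu) = \pi_1\#\bar\mu_0 = m_0$ (since $\bar\mu_0 \in \mathcal{P}_{m_0}(Z)$) and $\tilde\pi_2\#\rho = \pi_3\#\nu = m_1$, so $\rho \in \Pi(m_0,m_1)$; moreover the integrand $d_X(x,x')$ depends only on $(x,x')$ and is preserved under $\pi_{13}$, so the costs coincide. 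This yields $\val\eqref{pb:OT1} \le \val\eqref{pb:OT}$. For the reverse inequality I would take a solution $\rho$ of \eqref{pb:OT1} and glue it to the disintegration of $\bar\mu_0$ along $m_0$: writing $\bar\mu_0 = \int_X (\bar\mu_0)_x \, dm_0(x)$ via Theorem \ref{thm:disintegration}, define
\begin{equation*}
\nu \coloneqq \int_X \Big( (\bar\mu_0)_x \otimes \delta_{\,\cdot\,} \Big)\, d\rho(x,x'),
\end{equation*}
that is, the measure on $Z\times X$ obtained by first splitting $x$ into the $(x,y)$-fiber using $(\bar\mu_0)_x$ and attaching the transported point $x'$ from $\rho$. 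One checks $\pi_{12}\#\nu = \bar\mu_0$ and $\pi_3\#\nu = m_1$, so $\nu \in \Pi(\bar\mu_0,m_1)$, and again the cost is unchanged because it only sees $(x,x')$. This gives $\val\eqref{pb:OT} \le \val\eqref{pb:OT1}$.

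Combining the two inequalities yields $\val\eqref{pb:OT1} = \val\eqref{pb:OT}$, and chaining with the recalled identity gives $\val\eqref{pb:OT} = d_1(m_0,m_1)$; existence for \eqref{pb:OT} then follows either from the direct method or simply by exhibiting the glued $\nu$ above built from an optimal $\rho$, which is feasible and attains the value. The main obstacle I anticipate is the careful bookkeeping in the gluing construction: one must verify that the measure $\nu$ is well defined (measurability of the disintegration map $x \mapsto (\bar\mu_0)_x$, guaranteed by Theorem \ref{thm:disintegration}) and that its marginals are exactly $\bar\mu_0$ and $m_1$. A cleaner way to package this, avoiding an explicit integral-of-measures formula, is to apply the gluing lemma for measures: since $\pi_1\#\bar\mu_0 = m_0 = \tilde\pi_1\#\rho$, there is a measure on $Z\times X\times X$ whose $(Z,X_{\text{first}})$-marginal is $\bar\mu_0$ and whose $(X_{\text{first}},X_{\text{second}})$-marginal is $\rho$; projecting it onto $Z\times X$ (keeping the last coordinate) yields the desired $\nu$. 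I would use whichever formulation keeps the marginal verifications most transparent.
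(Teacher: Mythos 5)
Your proposal is correct and follows essentially the same route as the paper's proof: the identity $d_1(m_0,m_1)=\val\eqref{pb:OT1}$ via \cite[Rem.\@ 6.5]{villani2009}, existence via \cite[Thm.\@ 4.1]{villani2009}, the pushforward $\rho=\pi_{13}\#\nu$ with the projection identities for one inequality, and the Gluing lemma (your ``cleaner packaging,'' which is exactly what the paper uses) for the other. The only cosmetic difference is your optional explicit disintegration formula for $\nu$, which the paper reserves for the empirical case in Remark \ref{rem:empirical}.
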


\begin{proof}
Since $m_0,m_1 \in \mathcal{P}^1(X)$, by \cite[Rem.\@ 6.5]{villani2009}, we have $d_{1}(m_0,m_1) = \val\eqref{pb:OT1}$.
The existence of solutions of problems \eqref{pb:OT} and \eqref{pb:OT1} is from \cite[Thm.\@ 4.1]{villani2009}.

Let $\nu$ be a solution to \eqref{pb:OT} and let $\rho = \pi_{13}\#\nu$, which is clearly an element of $\mathcal{P}(X\times X)$. By the basic properties of push-forward measures, we have that
$
\tilde{\pi}_1\# \rho = \tilde{\pi}_1\# (\pi_{13}\# \nu) = (\tilde{\pi}_1\circ \pi_{13})\# \nu.
$
Using the relation $\tilde{\pi}_{1}\circ \pi_{13} = \pi_1 \circ \pi_{12}$, we obtain that $ (\tilde{\pi}_1\circ \pi_{13})\# \rho = (\pi_1\circ \pi_{12})\# \rho = \pi_1\# (\pi_{12}\# \nu) = \pi_1\# \bar{\mu}_0 = m_0 $.
It follows that $\tilde{\pi}_1\# \rho = m_0$.
By similar arguments, we deduce that $\tilde{\pi}_2\# \rho = m_1$ from the relation $\tilde{\pi}_2\circ \pi_{13} = \pi_3$. Therefore, $\mu\in \Pi(m_0,m_1)$, moreover,
\begin{equation*}
\int_{X \times X} d_{X}(x,x') d \pi_{13}\#\nu(x,x') = \int_{Z \times X} d_{X}( \pi_{13}(x,y,x')) d \nu(x,y,x') = \val\eqref{pb:OT}.
\end{equation*}
It follows that $ d_1(m_0,m_1) \leq \val\eqref{pb:OT}$.

On the other hand, let $\rho$ be a solution of \eqref{pb:OT1}. Since $\bar{\mu}_0$ and $\rho$ have the same marginal distribution $m_0$ with respect to their first variable, by the Gluing lemma \cite[p.\@ 11]{villani2009}, there exists a probability measure $\nu\in \mathcal{P}(X\times Y \times X)$ such that
\begin{equation*}
\pi_{12} \# \nu = \bar{\mu}_0, \qquad  \pi_{13}\# \nu = \rho.
\end{equation*}
Since $\bar{\mu}_0 \in \mathcal{P}(Z)$, we have $\nu \in \mathcal{P}(Z\times X)$. From the relation $\pi_3 = \tilde{\pi}_2 \circ \pi_{13} $, we deduce that $ \pi_3 \# \nu = \tilde{\pi}_2\# (\pi_{13}\# \nu) = \tilde{\pi}_2\# \rho = m_1$. Thus, $\nu\in \Pi(\bar{\mu}_0, m_1)$, moreover,
\begin{equation*}
\begin{split}
\int_{Z \times X} d_{X}(x,x') d \nu(x, y , x') =  \int_{Z \times X} d_{X}( \pi_{13}(x,y,x')) d \nu(x,y,x')
= \int_{X \times X} d_{X}( x,x') d \pi_{13}\#\nu(x,x')
\\
=\int_{X \times X} d_{X}( x,x') d \rho(x,x') = d_{1}(m_0,m_1).
\end{split}
\end{equation*}
It follows that $ d_1(m_0,m_1) \geq \val\eqref{pb:OT}$.
\end{proof}

The following two lemmas demonstrate that the set-valued function $S$ has a measurable selection, which will be denoted by $s$.
We will see that the function $s$ will fulfill the role of $s_N$ in the particular case discussed earlier.
\begin{lem}\label{lm:usc}
Under Assumption \ref{ass2}(3), let $(x_n )_{n\geq 1}$ be a sequence in $X$ converging to some $x_0\in X$. Then any sequence $(y_n\in Z_{x_n})_{n\geq 1}$ has a convergent sub-sequence with its limit in $Z_{x_0}$.
\end{lem}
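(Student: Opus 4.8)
The plan is to exploit the upper semi-continuity of $F$ together with the compactness of the image $Z_{x_0} = F(x_0)$: first I would show that the points $y_n$ are eventually forced into arbitrarily small neighborhoods of the compact set $Z_{x_0}$, and then extract a convergent subsequence using that compactness. Throughout, for $y \in Y$ I write $d_Y(y, Z_{x_0}) := \inf_{z \in Z_{x_0}} d_Y(y,z)$ for the distance from $y$ to $Z_{x_0}$.

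First, I would fix an integer $k \geq 1$ and consider the open $1/k$-neighborhood of $Z_{x_0}$, namely $\mathcal{U}_k := \{ y \in Y \mid d_Y(y, Z_{x_0}) < 1/k \}$, which is an open set containing $F(x_0) = Z_{x_0}$. By Assumption \ref{ass2}(3), the set-valued function $F$ is upper semi-continuous at $x_0$, so by Definition \ref{def1}(2) there exists $\eta_k > 0$ such that $F(x') \subseteq \mathcal{U}_k$ whenever $x' \in B_X(x_0, \eta_k)$. Since $x_n \to x_0$, there is an index $N_k$ such that $x_n \in B_X(x_0, \eta_k)$ for all $n \geq N_k$; hence $y_n \in F(x_n) = Z_{x_n} \subseteq \mathcal{U}_k$, that is, $d_Y(y_n, Z_{x_0}) < 1/k$, for all $n \geq N_k$. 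As $k$ is arbitrary, this shows that $d_Y(y_n, Z_{x_0}) \to 0$ as $n \to \infty$.

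Next, using that $Z_{x_0}$ is compact, I would pick for each $n$ a point $z_n \in Z_{x_0}$ attaining the distance, so that $d_Y(y_n, z_n) = d_Y(y_n, Z_{x_0}) \to 0$. Since the sequence $(z_n)_{n \geq 1}$ lies in the compact set $Z_{x_0}$, it admits a subsequence $z_{n_j} \to z_0 \in Z_{x_0}$. The triangle inequality $d_Y(y_{n_j}, z_0) \leq d_Y(y_{n_j}, z_{n_j}) + d_Y(z_{n_j}, z_0)$ then forces $y_{n_j} \to z_0 \in Z_{x_0}$, which is exactly the desired convergent subsequence with limit in $Z_{x_0}$.

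The argument is essentially routine once these two ingredients are isolated. The only step that requires a little care is the passage from the abstract upper semi-continuity of Definition \ref{def1}(2) to the quantitative conclusion $d_Y(y_n, Z_{x_0}) \to 0$, where one must correctly choose the neighborhoods $\mathcal{U}_k$ of the (compact, hence well-behaved) image $Z_{x_0}$ and invoke the convergence $x_n \to x_0$. I do not anticipate any genuine obstacle beyond this bookkeeping; compactness of $Z_{x_0}$ is what makes both the selection of the nearest points $z_n$ and the final subsequence extraction possible.
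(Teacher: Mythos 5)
Your proposal is correct and follows essentially the same route as the paper: both arguments use the upper semi-continuity of $F$ at $x_0$ to force $y_n$ into shrinking neighborhoods of the compact set $Z_{x_0}$, pick companion points in $Z_{x_0}$, extract a convergent subsequence by compactness, and conclude with the triangle inequality. If anything, your bookkeeping is slightly cleaner: by observing that the whole tail of $(x_n)$ lies in $B_X(x_0,\eta_k)$ you get $d_Y(y_n, Z_{x_0}) \to 0$ for the full sequence and need only one subsequence extraction, whereas the paper inductively builds a subsequence $\varphi(k)$ first and then extracts again, leading to the double indexing $y_{\phi(\varphi(k))}$.
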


\begin{proof}
Since $F\colon X \rightsquigarrow Y$ is upper semi-continuous, for any $k\geq 1$, there exists $\eta_k >0$ such that for any $x\in B_X(x_0,\eta_k)$, we have $Z_{x} \subseteq B_Y(Z_{x_0}, 1/k) \coloneqq \cup_{y \in Z_{x_0}} B_Y(y, 1/k)$. For $k=1$, there exists $\varphi(1)\in \mathbb{N}_{+}$ such that $Z_{x_{\varphi(1)}} \subseteq B_Y(Z_{x_0}, 1)$, i.e., there exists $\bar{y}_1 \in Z_{x_0}$ such that
\begin{equation*}
d_{Y} (y_{\varphi(1)}, \bar{y}_1) \leq 1.
\end{equation*}
Assume now we have $\varphi(k)\in \mathbb{N}_{+}$ and $\bar{y}_k \in Z_{x_0}$  for $k=1,\ldots, K$ such that
\begin{equation*}
d_{Y} (y_{\varphi(k)}, \bar{y}_k) \leq \frac{1}{k}, \qquad k=1,\ldots, K.
\end{equation*}
Since $x_n\to x_0$, we can find $ \varphi(K+1)> \varphi(K)$ such that $x_{\varphi(K+1)} \in B_{X}(x_0,\eta_{K+1})$. As a consequence,
there exists $\bar{y}_{K+1} \in Z_{x_0}$ such that
\begin{equation*}
d_{Y} (y_{\varphi(K+1)}, \bar{y}_{K+1}) \leq \frac{1}{K+1}.
\end{equation*}
Since $Z_{x_0}$ is compact, $(\bar{y}_{k})_{k\geq 1}$ has a convergent sub-sequence $(\bar{y}_{\phi(k)})_{k\geq 1}$ with a limit $\bar{y}\in Z_{x_0}$. By the triangle inequality,
\begin{equation*}
d_Y(y_{\phi(\varphi(k))}, \bar{y}) \leq d_Y(y_{\phi(\varphi(k))}, \bar{y}_{\phi(\varphi(k))}) + d_Y(\bar{y}_{\phi(\varphi(k))}, \bar{y}) \leq \frac{1}{\phi(\varphi(k))} + d_Y(\bar{y}_{\phi(\varphi(k))}, \bar{y}).
\end{equation*}
Since $\phi$ and $\varphi$ are strictly increasing functions going to $+\infty$, we have $ \lim_{k\to +\infty}d_Y(y_{\phi(\varphi(k))}, \bar{y}) = 0$.
Therefore, $( y_{\phi(\varphi(k))})_{k\geq 1}$ is a convergent sub-sequence of $(y_n)_{n\geq 1}$ with its limit $\bar{y}\in Z_{x_0}$.
\end{proof}

\begin{lem} \label{lm:S1}
Under Assumption \ref{ass2}, the set-valued function $S$ has a Borel measurable selection function $s \colon Z\times X \to  Z$. Furthermore,  we have $\|g(s(x,y,x')) -  g(x,y)\| \leq L_g  d_{X}(x, x')$.
\end{lem}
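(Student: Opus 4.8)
The plan is to obtain $s$ as a measurable selection of $S$ via Theorem \ref{thm:measurable_selection}, which demands two ingredients: that $S$ has non-empty images and that $S$ is measurable. The non-emptiness is exactly where Assumption \ref{ass2}(4) enters. Fixing $(x,y,x') \in Z\times X$, since $g(x,y)\in \mathcal{Z}(x)$ and $\mathcal{Z}$ is $L_g$-Lipschitz, Definition \ref{def1}(3) gives $\mathcal{Z}(x)\subseteq \mathcal{Z}(x') + B_{\mathcal{H}}(0, L_g d_X(x,x'))$, so there is some $y'\in Z_{x'}$ with $\|g(x',y')-g(x,y)\|\leq L_g d_X(x,x')$; hence $(x',y')\in S(x,y,x')$ and $S$ has non-empty images.

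For measurability I would invoke Lemma \ref{lm:set_measruable_1}, that is, show that $S^{-1}(\mathcal{C})$ is Borel for every closed $\mathcal{C}\subseteq Z$ — in fact I expect to prove it is closed. Take a sequence $(x_n,y_n,x_n')\in S^{-1}(\mathcal{C})$ converging to $(x,y,x')$, and for each $n$ pick $(x_n',y_n')\in S(x_n,y_n,x_n')\cap \mathcal{C}$, so $y_n'\in Z_{x_n'}$ and $\|g(x_n',y_n')-g(x_n,y_n)\|\leq L_g d_X(x_n,x_n')$. Here lies the main obstacle: the points $y_n'$ live in the moving fibers $Z_{x_n'}$, so one cannot extract a limit by a naive compactness argument. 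This is precisely what Lemma \ref{lm:usc} resolves (it rests on the compactness of $Z_{x'}$ and the upper semi-continuity of $F$ in Assumption \ref{ass2}(3)): since $x_n'\to x'$, a subsequence of $(y_n')$ converges to some $y'\in Z_{x'}$. Along this subsequence, continuity of $g$ (Assumption \ref{ass2}(2)) together with continuity of $d_X$ lets me pass to the limit in the defining inequality, yielding $\|g(x',y')-g(x,y)\|\leq L_g d_X(x,x')$, while closedness of $\mathcal{C}$ gives $(x',y')\in \mathcal{C}$. Thus $(x',y')\in S(x,y,x')\cap \mathcal{C}$, so $(x,y,x')\in S^{-1}(\mathcal{C})$ and $S^{-1}(\mathcal{C})$ is closed.

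With non-empty images and measurability in hand, Theorem \ref{thm:measurable_selection} produces a Borel measurable selection $s\colon Z\times X\to Z$ with $s(x,y,x')\in S(x,y,x')$ for every $(x,y,x')$, and the asserted bound $\|g(s(x,y,x'))-g(x,y)\|\leq L_g d_X(x,x')$ is then immediate from the definition \eqref{eq:S} of $S$. I would note in passing that the same limiting argument shows $\mathrm{Graph}(S)$ is closed, so an alternative route to measurability would combine the closed graph with the compactness of the images of $S$; but the passage through Lemma \ref{lm:set_measruable_1} is the most economical given the tools already assembled.
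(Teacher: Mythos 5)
Your proposal is correct and follows essentially the same route as the paper: non-emptiness of the images of $S$ from the $L_g$-Lipschitz continuity of $\mathcal{Z}$, closedness of $S^{-1}(\mathcal{C})$ via Lemma \ref{lm:usc} to handle the moving fibers $Z_{x_n'}$, then Lemma \ref{lm:set_measruable_1} and Theorem \ref{thm:measurable_selection} to extract the selection, with the bound immediate from \eqref{eq:S}. The only cosmetic difference is that the paper also verifies explicitly that $S$ has closed images before invoking the selection theorem, a point your closing remark about $\mathrm{Graph}(S)$ being closed already covers.
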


\begin{proof}
We will apply Theorem \ref{thm:measurable_selection} and Lemma \ref{lm:set_measruable_1} to prove the result. The images of $S$ are non-empty since the set-valued mapping $\mathcal{Z}$ (defined in \eqref{eq:def_Z}) is supposed to be $L_g$-Lipschitz.
Let us first verify that $S$ has non-empty closed images. Fix any $(x,y,x')\in Z\times X$, and assume that $(x',z_n)\in S(x,y,x')$ converges to some $(x',z)\in Z$.
It suffices to prove that $(x',z)\in  S(x,y,x')$, i.e., $ \|g(x', z) -  g(x,y)\|
\leq L_g  d_{X}(x, x')$. This is true since $g$ is continuous and $z_n\to z$.

Then, let us show that $S^{-1}(\mathcal{C})$ is closed for any closed subset $\mathcal{C}$ in $Z$.
By \eqref{eq:S}, we have
\begin{align*}
S^{-1}(\mathcal{C})  = & \left\{ (x,y,x')\in Z\times X \, \mid \, S(x,y,x')\cap \mathcal{C} \neq \emptyset \right\} \\
=  & \left\{ (x,y,x')\in Z\times X \, \mid \, \exists\, y' \in Z_{x'} \text{ such that } \begin{cases}
(x', y')\in \mathcal{C}, \\
\|g(x', y') -  g(x,y)\|
\leq L_g  d_{X}(x, x')
\end{cases}
\right\}.
\end{align*}
If $S^{-1}(\mathcal{C})= \emptyset$, then the conclusion is obvious. Assume that $S^{-1}(\mathcal{C}) \neq \emptyset$ and let $(x_n,y_n,x'_n)_{n\geq 1} \in S^{-1}(\mathcal{C})$ be a convergent sequence with its limit point $(x_0, y_0,x'_0) \in Z\times X$. Then, it suffices to prove that $ (x_0, y_0,x'_0) \in S^{-1}(\mathcal{C})$. Since $(x_n,y_n,x'_n )\in S^{-1}(\mathcal{C})$, there exists $y'_n\in Z_{x'_n}$, for any $n$, such that
\begin{equation*}
(x'_n, y'_n)\in \mathcal{C} , \qquad \|g(x'_n, y'_n) -  g(x_n,y_n)\|
\leq L_g  d_{X}(x_n, x'_n) .
\end{equation*}
By Lemma \ref{lm:usc}, the sequence $(y'_n)_{n\geq 1}$ has a convergent sub-sequence $(y'_{\varphi(n)})_{n\geq 1}$ with its limit $y'_0\in Z_{x'_0}$. Hence, $\lim_{n\to \infty} (x'_{\varphi(n)}, y'_{\varphi(n)}) = (x'_0,y'_0)$. Since $\mathcal{C}$ is closed, we have $ (x'_0, y'_0)\in \mathcal{C}$. By the triangle inequality,
\begin{equation*}
\begin{split}
\|g(x'_0, y'_0) -  g(x_0,y_0)\| \leq \|g(x'_0, y'_0) - g(x'_{\varphi(n)}, y'_{\varphi(n)})\| + \| g(x'_{\varphi(n)}, y'_{\varphi(n)}) - g(x_{\varphi(n)}, y_{\varphi(n)})\| \\ + \|g(x_{\varphi(n)}, y_{\varphi(n)}) -g(x_0,y_0) \|.
\end{split}
\end{equation*}
By the continuity of $g$, we have
\begin{equation*}
\begin{split}
\|g(x'_0, y'_0) -  g(x_0,y_0)\| &\leq \limsup_{n\to \infty}  \| g(x'_{\varphi(n)}, y'_{\varphi(n)}) - g(x_{\varphi(n)}, y_{\varphi(n)})\|
\\
&\leq \limsup_{n\to \infty}  L_g  d_{X}(x_{\varphi(n)}, x'_{\varphi(n)}) =  L_g  d_{X}(x_0, x'_0).
\end{split}
\end{equation*}
Therefore, $y_0'\in Z_{x_0'}$, $(x_0', y_0')\in \mathcal{C}$ and $ \|g(x'_0, y'_0) -  g(x_0,y_0)\| \leq L_g  d_{X}(x_0, x'_0)$. It follows that
$(x_0,y_0,x_0') \in S^{-1}(\mathcal{C})$, which implies that $S^{-1}(\mathcal{C})$ is closed, thus a Borel set.

Lemma \ref{lm:set_measruable_1} shows that the set-valued function $S$ is Borel measurable, and Theorem \ref{thm:measurable_selection} shows the existence of a Borel measurable selection $s$ of $S$. Since $s(x,y,x')\in S(x,y,x')$, the inequality $\|g(s(x,y,x')) -  g(x,y)\| \leq L_g  d_{X}(x, x')$ is a direct consequence of \eqref{eq:S}.
\end{proof}

\begin{lem}\label{lem:stable}
Let Assumptions \ref{ass1}-\ref{ass2} hold true.
Let $\nu\in \Pi(\bar{\mu}_0, m_1)$ and $s$ be the Borel measurable selection of $S$ obtained Lemma \ref{lm:S1}. Let $\mu_1 = s\# \nu$.
Then $ \mu_1 \in \mathcal{P}_{m_1}(Z)$ and
\begin{equation*}
f\left( \int_{Z} g d \mu_1 \right) - f\left( \int_{Z} g d \bar{\mu}_0 \right) \leq  L_g ( C + L M) \int_{Z\times X} d_{X}(x,x')  d\nu(x,y,x').
\end{equation*}
\end{lem}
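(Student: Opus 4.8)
The plan is to prove the two claims in turn: the membership $\mu_1 \in \mathcal{P}_{m_1}(Z)$ is a direct computation with push-forward measures, while the inequality relies on the descent lemma together with the pointwise estimate furnished by Lemma \ref{lm:S1}. First I would establish membership. Since the selection $s$ maps $Z\times X$ into $Z$, the push-forward $\mu_1 = s\#\nu$ is automatically a probability measure on $Z$, so only the first marginal needs to be identified. By construction $s(x,y,x')$ has first coordinate $x'$, so $\pi_1 \circ s = \pi_3$; consequently
\begin{equation*}
\pi_1 \# \mu_1 = \pi_1\#(s\#\nu) = (\pi_1\circ s)\#\nu = \pi_3\#\nu = m_1,
\end{equation*}
where the last equality uses $\nu\in \Pi(\bar{\mu}_0,m_1)$. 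This proves $\mu_1\in\mathcal{P}_{m_1}(Z)$.

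For the inequality, write $a = \int_Z g\,d\bar{\mu}_0$ and $b = \int_Z g\,d\mu_1$, and set $\bar{\lambda}_0 = \nabla f(a)$. Since $\nabla f$ is $L$-Lipschitz, the descent lemma gives
\begin{equation*}
f(b) - f(a) \leq \langle \bar{\lambda}_0, b - a\rangle + \frac{L}{2}\|b-a\|^2.
\end{equation*}
The crucial step is to control $b-a$ through $\nu$. Using $\pi_{12}\#\nu = \bar{\mu}_0$ and the change-of-variables formula for push-forwards (with Lemma \ref{lm:Bochner} justifying the Bochner integrals), I would rewrite
\begin{equation*}
b - a = \int_{Z\times X} \big( g(s(x,y,x')) - g(x,y)\big)\,d\nu(x,y,x').
\end{equation*}
The pointwise bound $\|g(s(x,y,x')) - g(x,y)\| \leq L_g\, d_X(x,x')$ from Lemma \ref{lm:S1}, combined with the triangle inequality for the Bochner integral, then yields $\|b-a\| \leq L_g \, I$, where $I \coloneqq \int_{Z\times X} d_X(x,x')\,d\nu$.

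It remains to bound the two terms of the descent inequality. The linear term is handled by Cauchy--Schwarz and the definition of $C$: since $\|\bar{\lambda}_0\| = \|\nabla f(a)\| \leq C$, we get $\langle \bar{\lambda}_0, b-a\rangle \leq C\|b-a\| \leq C L_g I$. The quadratic term is where the only real subtlety lies: rather than keeping it quadratic in $I$, I would linearize it using the boundedness of $g$. Indeed $\|a\|,\|b\| \leq M$ gives $\|b-a\| \leq 2M$, whence $\frac{L}{2}\|b-a\|^2 \leq LM\|b-a\| \leq L M L_g I$. Adding the two estimates produces exactly $f(b)-f(a) \leq L_g(C + LM) I$, the claimed bound. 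The main obstacle is thus not any single hard estimate but the observation that the second-order term must be downgraded to first order via $\|b-a\|\leq 2M$, in order to obtain a bound that is linear (rather than quadratic) in the transport cost $I$.
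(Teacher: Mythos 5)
Your proof is correct and follows the same overall skeleton as the paper's: the membership argument via $\pi_1\circ s=\pi_3$ is identical, and the main estimate likewise runs through the descent lemma with $\lambda_0=\nabla f\bigl(\int_Z g\,d\bar{\mu}_0\bigr)$, the pointwise bound from Lemma \ref{lm:S1}, and a linearization of the quadratic term using the bound $M$ on $g$. Where you genuinely diverge is in the organization of the two estimates. The paper never writes the vector-valued identity $b-a=\int_{Z\times X}\bigl(g\circ s-g\circ\pi_{12}\bigr)\,d\nu$; it works exclusively with scalar pairings $g_\lambda$ (which is all that Lemma \ref{lm:Bochner} literally provides), proving the first-order bound for $\lambda=\lambda_0$ and then handling the second-order term by expanding $\|b-a\|^2$ into the two double integrals $\gamma_1=\langle b-a,b\rangle$ and $\gamma_2=\langle a-b,a\rangle$ and re-running the first-order argument with $\lambda=g(z_2)$ for each fixed $z_2$, yielding $\gamma_1+\gamma_2\le 2L_gMI$. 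You instead establish the single estimate $\|b-a\|\le L_g I$ once, via the Bochner change of variables and the triangle inequality, and then deduce both the linear bound $\langle\lambda_0,b-a\rangle\le CL_gI$ and the quadratic bound $\tfrac{L}{2}\|b-a\|^2\le \tfrac{L}{2}\cdot 2M\cdot L_gI$ from it; this is more economical and arrives at the identical constant $L_g(C+LM)$. The one step you should justify explicitly is the vector-valued push-forward identity itself: it does not follow verbatim from Lemma \ref{lm:Bochner}, but it is routine to obtain by testing against an arbitrary $\lambda\in\mathcal{H}$, applying the scalar change of variables together with the pairing identity of Lemma \ref{lm:Bochner}, and using that an element of a Hilbert space is determined by its inner products --- which is essentially how the paper's scalarized route encodes the same information. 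So: correct, same underlying estimates, with a streamlined treatment of the second-order term in exchange for a small measure-theoretic justification the paper deliberately sidesteps.
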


\begin{proof}
\textbf{Step 1}. (Properties of $\mu_1$). Since $\nu\in \mathcal{P}(Z\times S)$ and $s\colon Z\times S \to Z$ is a Borel measurable function, we have $\mu = s\# \nu \in \mathcal{P}(Z)$.
Observing that $\pi_1\circ s = \pi_3$, it follows that $
\pi_1 \# \mu = (\pi_1\circ s)\# \nu = \pi_3\# \nu = m_1$. Thus, $\mu_1\in \mathcal{P}_{m_1}(Z)$.
For any $\lambda\in \mathcal{H}$, $g_{\lambda}\colon Z\to \mathbb{R}$ is bounded. Then,
\begin{equation}\label{eq:fund}
\int_{Z} g_{\lambda} d\mu_1 = \int_{Z\times X } g_{\lambda} \circ s \, d\nu .
\end{equation}

\textbf{Step 2}. (Quadratic upper bound). By the Lipschitz continuity of $\nabla f$, we have
\begin{equation}\label{eq:quad1}
f\left( \int_{Z} g d \mu_1 \right) - f\left( \int_{Z} g d \bar{\mu}_0 \right) \leq \int_{Z} g_{\lambda_0} ( d \mu_1 -d \bar{\mu}_0) + \frac{L}{2} \left\| \int_{Z} g d\mu_1 - \int_{Z} g d\bar{\mu}_0 \right\|^2,
\end{equation}
where $\lambda_0 = \nabla f(\int_{Z} g d\bar{\mu}_0)$.

\textbf{Step 3}. (First-order estimate).
Let us study the first-order term in \eqref{eq:quad1}. By \eqref{eq:fund}, we have
\begin{equation*}
\int_{Z} g_{\lambda_0}  d \mu_1 = \int_{Z\times X} g_{\lambda_0} \circ s d\nu = \int_{Z\times X} \langle \lambda_0\, , \, g(s(x,y,x')) \rangle d\nu(x,y,x').
\end{equation*}
From the relation $\pi_{12}\# \nu = \bar{\mu}_0$, we deduce that
\begin{equation*}
\int_{Z} g_{\lambda_0}  d\bar{\mu}_0 = \int_{Z\times X} \langle \lambda_0\, , \, g(x,y) \rangle d\nu(x,y,x').
\end{equation*}
Using the previous two equalities, Lemma \ref{lm:S1}, and the Cauchy–Schwarz inequality, we obtain that
\begin{equation*}
\int_{Z} g_{\lambda_0} ( d \mu_1 -d \bar{\mu}_0) \leq L_g\| \lambda_0\| \int_{Z\times X} d_{X}(x,x')  d\nu(x,y,x').
\end{equation*}

\textbf{Step 4}. (Second-order estimate). Let us study the second-order term in \eqref{eq:quad1}. Developing it and using Lemma \ref{lm:Bochner}, we obtain that
\begin{equation}
\left\| \int_{Z} g d\mu_1 - \int_{Z} g d\bar{\mu}_0 \right\|^2 = \gamma_1 + \gamma_2,
\end{equation}
where
\begin{align*}
\gamma_1 = & \int_{Z} \int_{Z} \langle g(z_1), g(z_2) \rangle \, d (\mu_1-\bar{\mu}_0)(z_1)\, d \mu_1(z_2);\\
\gamma_2 = & \int_{Z} \int_{Z} \langle g(z_1), g(z_2) \rangle \, d (\bar{\mu}_0 - \mu_1)(z_1) \, d \bar{\mu}_0(z_2).
\end{align*}
Fix any $z_2 \in Z$. Following the same argument as in step 3,
we have
\begin{equation*}
\left\| \int_{Z} \langle g(z_1), g(z_2) \rangle \, d (\mu_1-\bar{\mu}_0)(z_1) \right\| \leq L_g\|g(z_2)\| \int_{Z\times X} d_{X}(x,x')  d\nu(x,y,x').
\end{equation*}
It follows that
\begin{equation*}
\gamma_1 + \gamma_2 \leq 2 L_g M \int_{Z\times X} d_{X}(x,x')  d\nu(x,y,x').
\end{equation*}

\textbf{Step 5}. As a consequence of Steps 2-4, we deduce that
\begin{equation*}
f\left( \int_{Z} g d \mu_1 \right) - f\left( \int_{Z} g d \bar{\mu}_0 \right) \leq L_g (\|\lambda_0\| + L M) \int_{Z\times X} d_{X}(x,x')  d\nu(x,y,x').
\end{equation*}
By the definition of the constant $C$, we have $C\geq \|\lambda_0\|$.
The conclusion follows.
\end{proof}

We can now state the bridging algorithm that enables us to obtain an approximate solution of \eqref{pb:primal_1}, given an approximate solution $ \bar{\mu}_0 $ of \eqref{pb:primal_0}.

\medskip

\begin{algorithm}[H]
\caption{Bridging method}
\label{alg:Recover}
\begin{algorithmic}
\STATE{\textbf{Input:} $m_0, m_1 \in \mathcal{P}^1(Z)$, and $\bar{\mu}_0 \in \mathcal{P}_{m_0}(Z)$.}

\STATE{\textbf{Step 1.} Find a solution $\rho$ of the optimal transport problem \eqref{pb:OT1}.}

\STATE{\textbf{Step 2.} Find $\nu \in \mathcal{P}(Z\times X)$ such that $ \pi_{12} \# \nu = \bar{\mu}_0$ and $\pi_{13}\# \nu = \rho$.}

\STATE{\textbf{Step 3.} Set $\mu_1 = s\# \nu\in \mathcal{P}_{m_1}(Z)$, where $s$ is constructed in Lemma \ref{lm:S1}.}

\STATE{\textbf{Output:} $\mu_1$.}
\end{algorithmic}
\end{algorithm}

\smallskip

\begin{rem}\label{rem:empirical}
We have already discussed the case where $m_0$, $m_1$, and $\bar{\mu}_0$ are empirical distributions. We discuss now the slightly more general case where only $m_0$ and $\bar{\mu}_0$ are empirical distributions:
\begin{equation*}
m_0 =  \frac{1}{N}\sum_{i=1}^N \delta_{x_i}, \qquad \bar{\mu}_0 = \frac{1}{N} \sum_{i=1}^N \delta_{(x_i,y_i)}.
\end{equation*}
This situation corresponds to the algorithm presented in Section \ref{sec:algo}.
Since $\rho\in \mathcal{P}_{m_0}(X\times X)$, by Lemma \ref{lm:empirical}, we have $\rho = \frac{1}{N}\sum_{i=1}^N \delta_{x_i}\otimes \rho_{x_i}$, where $\rho_{x_i}$ is defined in Theorem \ref{thm:disintegration}.
Then the probability distribution $\mu$, obtained in general with the Gluing lemma, is given here in an explicit form:
\begin{equation*}
\nu = \frac{1}{N} \sum_{i=1}^N \delta_{(x_i,y_i)}\otimes \rho_{x_i}.
\end{equation*}
\end{rem}

\begin{thm} \label{thm:main}
Let Assumptions \ref{ass1}-\ref{ass2} hold true. Assume that $m_0, m_1 \in \mathcal{P}^1(Z)$ and that $\bar{\mu}_0$ is an $\epsilon_0$-minimizer of problem \eqref{pb:primal_0} for some $\epsilon_0 \geq 0$.
The following holds true.
\begin{enumerate}
\item $| \textnormal{\val}\eqref{pb:primal_0} -\textnormal{\val}\eqref{pb:primal_1} | \leq   L_g ( C + L M) d_1(m_0,m_1)$;
\item  If $\mu_1\in \mathcal{P}_{m_1}(Z)$ is the output of Algorithm \ref{alg:Recover}, then $\mu_1$ is an $\eta$-minimizer of problem \eqref{pb:primal_1}, where
\begin{equation*}
\eta = \epsilon_0 + 2 L_g ( C + L M) d_1(m_0,m_1).
\end{equation*}
\end{enumerate}
\end{thm}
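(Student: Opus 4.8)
The plan is to prove part (1) first and then derive part (2) from it. For part (1) I would establish the one-sided bound $\val\eqref{pb:primal_1} \le \val\eqref{pb:primal_0} + L_g(C+LM)\, d_1(m_0,m_1)$ and obtain the opposite bound by symmetry. To get the one-sided bound, fix $\epsilon>0$ and let $\bar\mu_0$ be an $\epsilon$-minimizer of \eqref{pb:primal_0}. By Lemma \ref{lm:OT}, problem \eqref{pb:OT} has a solution $\nu\in\Pi(\bar\mu_0,m_1)$ with $\int_{Z\times X} d_X(x,x')\,d\nu = \val\eqref{pb:OT} = d_1(m_0,m_1)$. Setting $\mu_1=s\#\nu$ for the selection $s$ of Lemma \ref{lm:S1}, Lemma \ref{lem:stable} gives $\mu_1\in\mathcal{P}_{m_1}(Z)$ and $f(\int_Z g\,d\mu_1)-f(\int_Z g\,d\bar\mu_0)\le L_g(C+LM)\,d_1(m_0,m_1)$. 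Since $\val\eqref{pb:primal_1}\le f(\int_Z g\,d\mu_1)$ and $f(\int_Z g\,d\bar\mu_0)\le \val\eqref{pb:primal_0}+\epsilon$, chaining these inequalities and letting $\epsilon\to0$ yields the one-sided bound.

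The reverse bound follows by exchanging the roles of $m_0$ and $m_1$: the set-valued map $S$, its selection $s$, and the constants $L_g,C,L,M$ do not privilege either marginal, and $d_1$ is symmetric, so the same argument applied to an $\epsilon$-minimizer of \eqref{pb:primal_1} produces a feasible point of \eqref{pb:primal_0} carrying the analogous estimate. Combining the two directions gives (1).

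For part (2) I would apply Lemma \ref{lem:stable} directly to the output $\mu_1$ of Algorithm \ref{alg:Recover}. The measure $\nu$ built in Steps 1--2 is the gluing of an optimal plan $\rho$ of \eqref{pb:OT1}, which by the proof of Lemma \ref{lm:OT} lies in $\Pi(\bar\mu_0,m_1)$ and satisfies $\int_{Z\times X} d_X(x,x')\,d\nu = d_1(m_0,m_1)$. Hence Lemma \ref{lem:stable} gives $f(\int_Z g\,d\mu_1)\le f(\int_Z g\,d\bar\mu_0)+L_g(C+LM)\,d_1(m_0,m_1)$. Using that $\bar\mu_0$ is an $\epsilon_0$-minimizer, i.e.\ $f(\int_Z g\,d\bar\mu_0)\le\val\eqref{pb:primal_0}+\epsilon_0$, together with $\val\eqref{pb:primal_0}\le\val\eqref{pb:primal_1}+L_g(C+LM)\,d_1(m_0,m_1)$ from part (1), I obtain $f(\int_Z g\,d\mu_1)\le\val\eqref{pb:primal_1}+\epsilon_0+2L_g(C+LM)\,d_1(m_0,m_1)=\val\eqref{pb:primal_1}+\eta$, which is exactly the claimed $\eta$-minimality.

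Since Lemmas \ref{lm:OT}, \ref{lm:S1} and \ref{lem:stable} already carry the analytic content, the theorem is essentially an assembly, and I do not expect a genuinely hard step. The points needing care are the bookkeeping behind the factor $2$ in $\eta$ — it arises from two distinct applications of $L_g(C+LM)\,d_1(m_0,m_1)$, one in Lemma \ref{lem:stable} (passing from $\bar\mu_0$ to $\mu_1$) and one in part (1) (comparing the two optimal values) — and the verification that the $\nu$ produced by the algorithm is genuinely optimal for \eqref{pb:OT}, so that $\int d_X\,d\nu$ equals $d_1(m_0,m_1)$ rather than merely bounding it. I would also confirm that nothing in the derivation of Lemmas \ref{lm:S1}--\ref{lem:stable} breaks the symmetry between $m_0$ and $m_1$ exploited in part (1).
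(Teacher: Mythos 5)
Your proposal is correct and follows essentially the same route as the paper's proof: for part (1), an $\epsilon$-minimizer of \eqref{pb:primal_0} combined with Lemma \ref{lm:OT} and Lemma \ref{lem:stable}, then symmetry in $m_0$ and $m_1$; for part (2), the same three estimates the paper organizes as the explicit decomposition $\gamma_1+\gamma_2+\gamma_3$, which you instead chain as successive inequalities. You also correctly flag the one point the paper itself makes explicit, namely that the $\nu$ produced by Steps 1--2 of Algorithm \ref{alg:Recover} is, by the proof of Lemma \ref{lm:OT}, optimal for \eqref{pb:OT}, so its transport cost equals $d_1(m_0,m_1)$.
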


\begin{proof}
We prove (1). Fix any $\epsilon >0$. Let $\mu_0^{\epsilon}$ be an $\epsilon$-minimizer of problem \eqref{pb:primal_0}. By Lemma \ref{lm:OT}, there exists $\nu^{\epsilon} \in \Pi(\mu^{\epsilon}_0,m_1)$ such that
\begin{equation*}
\int_{Z\times X} d_{X}(x,x')  d\nu^{\epsilon}(x,y,x') = d_1(m_0,m_1).
\end{equation*}
We deduce from Lemma \ref{lem:stable} that there exists $\mu_1^{\epsilon}\in \mathcal{P}_{m_1}(Z)$ associated with $\nu^{\epsilon}$ such that
\begin{equation*}
f\left(\int_{z} g\mu_1^{\epsilon} \right) - f\left( \int_{Z} g d \mu_0^{\epsilon} \right) \leq  L_g ( C + L M) d_1(m_0,m_1).
\end{equation*}
Since $\mu_1^{\epsilon} \in  \mathcal{P}_{m_1}(Z)$, $ \textnormal{\val}\eqref{pb:primal_1} \leq  f\left(\int_{z} g\mu_1 \right)$. Combining this with the fact $ \textnormal{\val}\eqref{pb:primal_0} \geq f\left( \int_{Z} g d \mu_0^{\epsilon} \right) - \epsilon$, we obtain that
\begin{equation*}
\textnormal{\val}\eqref{pb:primal_1} -\textnormal{\val}\eqref{pb:primal_0} \leq
f\left(\int_{z} g\mu_1^{\epsilon} \right) - f\left( \int_{Z} g d \mu_0^{\epsilon} \right) + \epsilon.
\end{equation*}
Therefore, $\textnormal{\val}\eqref{pb:primal_1} -\textnormal{\val}\eqref{pb:primal_0} \leq  L_g ( C + L M) d_1(m_0,m_1)$  by the arbitrariness of $\epsilon$. We conclude the first part of the proof by exchanging the positions of $m_0$ and $m_1$.

Let $\mu_1$ be the output of Algorithm \ref{alg:Recover}. To prove (2), we do the following decomposition:
\begin{equation*}
f\left(\int g d \mu_1 \right) - \val\eqref{pb:primal_1} = \gamma_1 + \gamma_2 + \gamma_3,
\end{equation*}
where
\begin{equation*}
\gamma_1  = f\left(\int g d\mu_1 \right) - f\left(\int g d \bar{\mu}_0 \right), \quad
\gamma_2  = f\left(\int g d \bar{\mu}_0 \right) - \val\eqref{pb:primal_0},\quad
\gamma_3  = \val\eqref{pb:primal_0} - \val\eqref{pb:primal_1}.
\end{equation*}
From the proof of Lemma \ref{lm:OT}, we know that $\nu$ (the result of step 2 in Algorithm \ref{alg:Recover}) is a solution of \eqref{pb:OT}. Then, Lemma \ref{lem:stable} shows that $\gamma_1 \leq L_g ( C + L M) d_1(m_0,m_1)$. Since $\bar{\mu}_0$ is an $\epsilon$-minimizer, $\gamma_2\leq \epsilon$. By point (1), $\gamma_3\leq  L_g ( C + L M) d_1(m_0,m_1)$. The conclusion follows.
\end{proof}

\section{Duality analysis} \label{sec:dual}

\subsection{The dual problem}

This section is dedicated to the duality analysis of the primal problem \eqref{pb:primal}. In the sequel of this section, let Assumptions \textbf{A}$^*$ and \ref{ass2} hold true. Consider the equivalent formulation of problem of \eqref{pb:primal},
\begin{equation}\label{pb:primal_eq}\tag{\text{$\tilde{\text{P}}_m$}}
\inf_{\mu \in \mathcal{P}_{m}(Z) ,\,  \beta \in \mathcal{H}} \ f \left( \beta \right), \qquad \text{s.t. } \beta =  \int_{Z} g d \mu.
\end{equation}

The \textit{Lagrangian} $\mathcal{L}\colon \mathcal{H}^2 \times \mathcal{P}_{m}(Z) \to \R$ associated with \eqref{pb:primal_eq} writes,
\begin{equation*}
\mathcal{L} (\lambda,  \beta , \mu ) = f(\beta) + \left\langle \lambda ,  \int_{Z} g d \mu  -  \beta \right\rangle.
\end{equation*}
Then, the dual problem of \eqref{pb:primal_eq} is,
\begin{equation}\label{eq:dual_1}
\sup_{\lambda\in \mathcal{H}}\ \, \underset{\beta \in\mathcal{H}\, ,\, \mu\in\mathcal{P}_{m}(Z)}{ \text{\phantom{p}inf\phantom{p}}} \ \mathcal{L} (\lambda,  \beta , \mu ) = \sup_{\lambda \in \mathcal{H}}  \Big( - f^{*}(\lambda) + \inf_{\mu \in \mathcal{P}_{m}(Z)} \int_{Z} \left\langle \lambda \, , \,
g(z) \right\rangle d \mu(z) \Big),
\end{equation}
where $f^{*}$ is the Fenchel conjugate of $f$. For any $\lambda\in \mathcal{H}$, since $g$ is bounded over $Z$, the second term $ \inf_{\mu \in \mathcal{P}_{m}(Z)} \int_{Z} \left\langle \lambda \, , \,
g(z) \right\rangle d \mu(z)$ is finite. Therefore, it suffices to study \eqref{eq:dual_1} for $\lambda\in \text{dom}(f^{*})$, i.e.,
\begin{equation*}
\sup_{\lambda \in \text{dom}(f^{*})} \Big(  - f^{*}(\lambda) + \inf_{\mu \in \mathcal{P}_{m}(Z)} \int_{Z} g_{\lambda} d \mu \Big).
\end{equation*}
The result of Lemma \ref{lm:inf_int} holds true for all $\lambda\in \text{dom}(f^{*})$ under Assumption \textbf{A}$^*$. Applying it to the previous problem, we obtain the following equivalent dual problem:
\begin{equation}\label{pb:dual}\tag{D$_m$}
- \inf_{\lambda \in \text{dom}(f^{*})} \mathcal{D}_{m}(\lambda) \coloneqq
f^{*}(\lambda) - \int_{X}  u_{\lambda}  d  m.
\end{equation}
\begin{lem}\label{lm:existence}
The function $\mathcal{D}_{m}$ is strongly convex with modulus $1/L$. As a consequence, problem \eqref{pb:dual} has a unique solution, denoted by $\lambda^{*}(m)$. Moreover, there exists a constant $C^{*}$ independent of $m$ such that
\begin{equation*}
\|\lambda^{*}(m)\| \leq C^{*}.
\end{equation*}
\end{lem}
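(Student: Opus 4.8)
The plan is to prove the three claims in order, since the strong convexity of $\mathcal{D}_m$ immediately yields both the existence and uniqueness of the minimizer and, via a coercivity-type argument, the uniform bound. The key observation is that $\mathcal{D}_m(\lambda) = f^*(\lambda) - \int_X u_\lambda \dd m$, where the first term is strongly convex (being the conjugate of a function with $L$-Lipschitz gradient) and the second term, $\lambda \mapsto -\int_X u_\lambda \dd m$, is convex. Adding a convex function to a strongly convex function preserves strong convexity with the same modulus.

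First I would establish the strong convexity of $f^*$ with modulus $1/L$. This is a standard duality fact: since $\nabla f$ is $L$-Lipschitz and $f$ is convex, the conjugate $f^*$ is strongly convex with parameter $1/L$ (see e.g.\ the Baillon--Haddad theorem / standard convex analysis). Second I would show that $\lambda \mapsto -\int_X u_\lambda \dd m$ is convex. For this, recall $u_\lambda(x) = \inf_{y \in Z_x} \langle \lambda, g(x,y)\rangle$; for each fixed $x$, the map $\lambda \mapsto u_\lambda(x)$ is an infimum of linear functions of $\lambda$, hence concave, so $-u_\lambda(x)$ is convex in $\lambda$. Integrating a family of convex functions against the probability measure $m$ preserves convexity, so $-\int_X u_\lambda \dd m$ is convex. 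Summing, $\mathcal{D}_m$ is strongly convex with modulus $1/L$.

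From strong convexity on the convex set $\text{dom}(f^*)$, existence and uniqueness of the minimizer $\lambda^*(m)$ follow by the standard argument: strong convexity gives coercivity of $\mathcal{D}_m$ (its sublevel sets are bounded), and a minimizing sequence is then bounded, hence admits a weakly convergent subsequence whose limit, by weak lower semicontinuity of the convex lower-semicontinuous function $\mathcal{D}_m$, is a minimizer; uniqueness is immediate since a strongly convex function has at most one minimizer.

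For the uniform bound, the main idea is to exploit strong convexity around the minimizer: for any fixed reference point $\lambda_0$ (say, one may use that $0 \in \text{dom}(f^*)$ or any convenient point), strong convexity gives
\begin{equation*}
\mathcal{D}_m(\lambda_0) \geq \mathcal{D}_m(\lambda^*(m)) + \frac{1}{2L}\|\lambda_0 - \lambda^*(m)\|^2,
\end{equation*}
using that the subgradient of $\mathcal{D}_m$ vanishes at the minimizer. Rearranging yields $\|\lambda_0 - \lambda^*(m)\|^2 \leq 2L\big(\mathcal{D}_m(\lambda_0) - \mathcal{D}_m(\lambda^*(m))\big)$. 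The hard part will be bounding the right-hand side \emph{uniformly in} $m$: I would bound $\mathcal{D}_m(\lambda_0)$ above and $\mathcal{D}_m(\lambda^*(m))$ below by constants independent of $m$. Since $g$ is bounded by $M$, the term $\int_X u_\lambda \dd m$ is controlled by $M\|\lambda\|$ uniformly over all $m \in \mathcal{P}(X)$ (as $|u_\lambda(x)| \leq M\|\lambda\|$), so $\mathcal{D}_m(\lambda_0)$ is bounded above independently of $m$; and $\mathcal{D}_m(\lambda^*(m)) = -\val\eqref{pb:dual} = -\val\eqref{pb:primal}$ by strong duality, which is bounded below by $\inf_\beta f(\beta)$-type estimates using the boundedness of the aggregate $\int_Z g \dd\mu$ over $\mathcal{P}(Z)$. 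Combining these uniform bounds produces a constant $C^*$ depending only on $L$, $M$, $f$, and $\lambda_0$, but not on $m$.
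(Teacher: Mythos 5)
Your first two steps coincide with the paper's proof: the same decomposition $\mathcal{D}_m = f^* - \int_X u_\lambda \, dm$, with strong convexity of $f^*$ from the $L$-Lipschitz continuity of $\nabla f$ and concavity of $\lambda \mapsto u_\lambda(x)$ as an infimum of linear maps, followed by the standard existence/uniqueness argument for strongly convex functions (your minimizing-sequence argument is a spelled-out version of what the paper leaves implicit; to make $\mathcal{D}_m$ weakly lsc you should note that $\lambda \mapsto \int_X u_\lambda \, dm$ is $M$-Lipschitz, which follows from $|u_{\lambda_1}(x) - u_{\lambda_2}(x)| \leq M\|\lambda_1-\lambda_2\|$, the bound proved later in Lemma \ref{lm:dual_lip}).

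The genuine gap is in your uniform bound: you invoke \emph{strong} duality, $\mathcal{D}_m(\lambda^*(m)) = -\val\eqref{pb:primal}$, but in this paper strong duality is only established afterwards (Proposition \ref{prop:strong_duality}) and under the \emph{additional} hypothesis that $G_m$ is closed, which Lemma \ref{lm:existence} does not assume; moreover the proof of Proposition \ref{prop:strong_duality} uses $\lambda^*(m)$ from this very lemma, so your route risks circularity. The gap is fixable: \emph{weak} duality suffices and holds unconditionally. By Fenchel--Young, for every $\mu \in \mathcal{P}_m(Z)$ and every $\lambda$, one has $f\left(\int_Z g\, d\mu\right) + f^*(\lambda) \geq \int_Z g_\lambda \, d\mu \geq \int_X u_\lambda \, dm$, hence $\mathcal{D}_m(\lambda^*(m)) \geq -\val\eqref{pb:primal} \geq -\sup_{\|\beta\|\leq M} f(\beta)$, which is finite and independent of $m$ since $\left\|\int_Z g \, d\mu\right\| \leq M$ and $f$ is bounded on bounded sets. (Note also a direction slip in your sketch: you need $\val\eqref{pb:primal}$ bounded \emph{above} uniformly in $m$, i.e.\ a $\sup$ of $f$ over the ball of radius $M$, not an ``$\inf_\beta f(\beta)$-type'' lower bound.) With that repair your argument is correct, and it is genuinely different from the paper's: the paper never touches the primal value, instead combining the minimality inequality $\mathcal{D}_m(\lambda^*(m)) \leq \mathcal{D}_m(\lambda_0)$ with the strong convexity of $f^*$ alone to get the self-bounding quadratic inequality $\frac{1}{2L}\|\lambda^*(m)-\lambda_0\|^2 + \langle p_0, \lambda^*(m)-\lambda_0\rangle \leq M(\|\lambda^*(m)\|+\|\lambda_0\|)$, yielding the explicit constant $C^* = 3\|\lambda_0\| + 2L(M+\|p_0\|)$. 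The paper's route is more self-contained and gives an explicit $C^*$; yours, once weak duality replaces strong duality, is conceptually cleaner in that it only uses the quadratic growth of $\mathcal{D}_m$ at its minimizer plus a uniform bound on the duality value.
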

\begin{proof}
Since $\nabla f$ is $L$-Lipschitz continuous, we know that $f^*$ is strongly convex with modulus $1/(2L)$ (i.e.\@ $f^*- 1/L \| \cdot \|^2$ is convex) (see \cite[Thm.\@ 18.15]{bauschke2011convex}). Let us consider $u_{\lambda}(x)$ as a function of $\lambda$ while fixing any $x\in X$. By definition, $\lambda \mapsto u_{\lambda}(x)$ is the infimum of a family of affine functions (with respect to $\lambda)$, thus it is concave with respect to $\lambda$. Consequently, $-\int_{X}u_{\lambda}dm$ is convex with respect to $\lambda$. Therefore, $\mathcal{D}_m$ is $1/L$-strongly convex. Additionally, $\text{dom}(f^*)$ is both convex and closed. These properties guarantee the existence and uniqueness of the minimizer $\lambda^*(m)$.

Since $M$ is an upper bound of $\|g(z)\|$, it follows that for all $\lambda\in\mathcal{H}$:
\begin{equation*}
-M\|\lambda\|\leq \inf_{y\in Z_{x}}-\|\lambda\|\|g(x,y)\|\leq u_{\lambda}(x)\leq \sup_{y\in Z_{x}}\|\lambda\|\|g(x,y)\|\leq M\|\lambda\|.
\end{equation*}
Let $\lambda_0\in\text{dom}(f^*)$. As $\mathcal{D}_m(\lambda^*(m))\leq \mathcal{D}_m(\lambda_0)$, we can derive the following inequalities:
\begin{equation*}
f^*(\lambda_0)+M\|\lambda_0\|\geq \mathcal{D}_m(\lambda_0)\geq \mathcal{D}_m(\lambda^*(m))\geq f^*(\lambda^*(m))-M\|\lambda^*(m)\|.
\end{equation*}
The strong convexity of $f^*$ yields that
\begin{equation*}
\frac{1}{2L}\|\lambda^*(m)-\lambda_0\|^2+\langle p_0,\lambda^*(m)-\lambda_0\rangle\leq f^*(\lambda^*(m))-f^*(\lambda_0),
\end{equation*}
where $p_0\in\partial f^*(\lambda_0)$.
Combining the two above inequalities, we obtain:
\begin{equation*}
\frac{1}{2L}\|\lambda^*(m)-\lambda_0\|^2+\langle p_0,\lambda^*(m)-\lambda_0\rangle\leq M(\|\lambda^*(m)\|+\|\lambda_0\|).
\end{equation*}
where $p_0\in\partial f^*(\lambda_0)$. The announced result follows, with $C^*=3\|\lambda_0\|+2L(M+\|p_0\|)$.
\end{proof}

\subsection{Strong duality}
Let us now prove the strong duality principle between \eqref{pb:primal} and \eqref{pb:dual}, i.e., $\textnormal{\val}\eqref{pb:primal} = \textnormal{\val}\eqref{pb:dual}$ . We will apply the Fenchel-Rockafellar theorem \cite{rockafellar1997convex} to prove this relation.
\begin{prop}\label{prop:strong_duality}
Assume that the set $G_m \coloneqq \{ \int_{\mu}g d\mu \mid  \mu\in \mathcal{P}_m(Z)\} \subseteq \mathcal{H}$ is closed. Then,
\begin{enumerate}
\item $\textnormal{\val}\eqref{pb:primal} = \textnormal{\val}\eqref{pb:dual}$;
\item the primal problem \eqref{pb:primal} has a solution;
\item let $\lambda^{*}(m)$ be the solution of \eqref{pb:dual} and let $\mu$ be a solution of \eqref{pb:primal}, then
\begin{equation*}
\lambda^{*}(m) = \nabla f\left(\int_{Z} g d\mu\right).
\end{equation*}
\end{enumerate}
\end{prop}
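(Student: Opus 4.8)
The plan is to reformulate \eqref{pb:primal} as a minimization of $f$ over the aggregate set $G_m\subseteq\mathcal{H}$ and then invoke the Fenchel--Rockafellar theorem. First I would record the structural properties of $G_m$: it is nonempty (any measurable selection of $F$ produces an element of $\mathcal{P}_m(Z)$, hence a point of $G_m$), convex (being the image of the convex set $\mathcal{P}_m(Z)$ under the affine map $\mu\mapsto\int_Z g\,d\mu$), and bounded by $M$ (since $\|g\|\le M$). Together with the standing hypothesis that $G_m$ is closed, this makes $G_m$ a closed, bounded, convex subset of the Hilbert space $\mathcal{H}$, hence weakly compact. Writing $\val\eqref{pb:primal}=\inf_{\beta\in G_m}f(\beta)$ and using that $f$, being convex and continuous, is weakly lower semicontinuous, I obtain a minimizer $\beta^\star\in G_m$; choosing any $\mu^\star\in\mathcal{P}_m(Z)$ with $\int_Z g\,d\mu^\star=\beta^\star$ proves assertion (2).

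For assertion (1), I would apply the Fenchel--Rockafellar theorem \cite{rockafellar1997convex} to $\phi=f$ and $\psi=\iota_{G_m}$ (the indicator of $G_m$), so that $\val\eqref{pb:primal}=\inf_{\beta\in\mathcal{H}}\bigl(\phi(\beta)+\psi(\beta)\bigr)$. The qualification condition holds because $f$ is finite and continuous on all of $\mathcal{H}$, in particular at any point of $G_m$; this also yields attainment on the dual side. The theorem gives $\inf(\phi+\psi)=\sup_{\lambda}\bigl(-f^*(\lambda)-\iota_{G_m}^*(-\lambda)\bigr)$, and since $\iota_{G_m}^*$ is the support function of $G_m$, one has $-\iota_{G_m}^*(-\lambda)=\inf_{\beta\in G_m}\langle\lambda,\beta\rangle=\inf_{\mu\in\mathcal{P}_m(Z)}\int_Z g_\lambda\,d\mu$. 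Here I would use Lemma \ref{lm:inf_int}, valid for every $\lambda\in\operatorname{dom}(f^*)$ under Assumption \textbf{A}$^*$, to rewrite this infimum as $\int_X u_\lambda\,dm$; the resulting expression is exactly $-\mathcal{D}_m(\lambda)$, so the dual coincides with \eqref{pb:dual} and $\val\eqref{pb:primal}=\val\eqref{pb:dual}$.

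Finally, for assertion (3), I would read off the optimality (complementary-slackness) conditions attached to the Fenchel--Rockafellar duality: if $\beta^\star$ is primal optimal and $\lambda^\star(m)$ is dual optimal (its existence and uniqueness being guaranteed by Lemma \ref{lm:existence}), then the Fenchel--Young equality forces $\lambda^\star(m)\in\partial f(\beta^\star)$. Since $f$ is differentiable by Assumption \ref{ass1}(1), the subdifferential is the singleton $\{\nabla f(\beta^\star)\}$, whence $\lambda^\star(m)=\nabla f(\beta^\star)=\nabla f\bigl(\int_Z g\,d\mu\bigr)$ for any primal solution $\mu$.

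The main obstacle I anticipate is the careful verification of the hypotheses of the infinite-dimensional Fenchel--Rockafellar theorem together with the precise identification of the dual functional: one must confirm that the continuity of $f$ supplies the required constraint qualification, that the conjugate of $\iota_{G_m}$ produces exactly the support-function term, and---crucially---that Lemma \ref{lm:inf_int} may be applied over the whole of $\operatorname{dom}(f^*)$ rather than merely on $\mathcal{H}_f$, which is precisely where the strengthened Assumption \textbf{A}$^*$ enters. The remaining ingredients (weak compactness of $G_m$, weak lower semicontinuity of $f$, and the Fenchel--Young characterization of joint optimality) are standard once this identification is in place.
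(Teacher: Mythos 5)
Your proposal is correct and follows essentially the same route as the paper's proof: both recast \eqref{pb:primal} as $\inf_{\beta\in\mathcal{H}} f(\beta)+\chi_{G_m}(\beta)$, apply the Fenchel--Rockafellar theorem with the qualification supplied by the continuity of $f$, identify the dual through the support function of $G_m$ together with Lemma \ref{lm:inf_int} (valid on $\operatorname{dom}(f^{*})$ under Assumption \textbf{A}$^*$), and deduce assertion (3) from the Fenchel--Young equality at the optimal primal--dual pair. The only cosmetic difference is that the paper obtains existence by citing \cite[Cor.\@ 3.23]{brezis2011functional}, whose content is precisely your weak-compactness and weak-lower-semicontinuity argument for $G_m$ and $f$.
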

\begin{proof}
Let us consider the following optimization problem with variable in $\mathcal{H}$:
\begin{equation}\label{pb:pri_opt}
\inf_{z \in \mathcal{H} } f(z) + \chi_{G_m}(z).
\end{equation}
It is obvious that $\val\eqref{pb:primal} = \val\eqref{pb:primal_eq} = \val\eqref{pb:pri_opt}$. The dual problem of \eqref{pb:pri_opt} writes
\begin{equation}\label{pb:dual_opt}
\sup_{\lambda \in \mathcal{H} } -f^{*}(\lambda) - \chi_{G_m}^{*}(-\lambda).
\end{equation}
By the definition of the Fenchel conjugate and the definition of $G_m$, we have
\begin{equation*}
- \chi_{G_m}^{*}(-\lambda) = \inf_{z\in G_m} \langle \lambda, z\rangle = \inf_{\mu\in \mathcal{P}_m(Z)} \left\langle \lambda, \int_{Z} g d\mu \right\rangle.
\end{equation*}
Therefore, $\val\eqref{pb:dual} = \val\eqref{eq:dual_1} = \val\eqref{pb:dual_opt}$. Let us apply the Fenchel-Rockafellar theorem to \eqref{pb:pri_opt}. The function $f$ is convex and continuous. The function $\chi_{G_m}$ is convex and lower semi-continuous from the fact that $G_m$ is convex and closed. It is obvious that $G_m$ is non-empty. Therefore, $0\in \text{int} (\mathcal{H}- G_m) =\text{int} (\text{dom} f- \text{dom} \chi_{G_m}) $. By the Fenchel-Rockafellar theorem \cite{rockafellar1997convex}, $\val\eqref{pb:dual_opt} = \val\eqref{pb:pri_opt}$, thus, $\textnormal{\val}\eqref{pb:primal} = \textnormal{\val}\eqref{pb:dual}$.

Since $G_m$ is non-empty, bounded, convex, and closed, and since $f$ is continuous and convex, we deduce from \cite[Cor.\@ 3.23]{brezis2011functional} that problem \eqref{pb:pri_opt} has a solution. Therefore, \eqref{pb:primal} has a solution, denoted by $\mu$. Since $\lambda^{*}(m)$ is the solution of \eqref{pb:dual}, by the strong duality,
\begin{equation*}
-f^{*}(\lambda^{*}(m)) + \inf_{z\in G_m} \langle \lambda^{*}(m), z\rangle  =  f\left(\int_{Z} g d\mu\right).
\end{equation*}
On the other hand, by the definition of Fenchel's conjugate,
\begin{equation*}
f\left(\int_{Z} g d\mu\right) + f^{*}(\lambda^{*}(m)) \leq \left\langle \lambda^{*}(m),  \int_{Z} g d\mu\right\rangle .
\end{equation*}
Combining the previous two inequalities and the fact that $ \int_{Z} g d\mu \in G_m $, we deduce that
\begin{equation*}
f\left(\int_{Z} g d\mu\right) + f^{*}(\lambda^{*}(m)) = \left\langle \lambda^{*}(m),  \int_{Z} g d\mu\right\rangle.
\end{equation*}
We obtain that $\lambda^{*} (m)= \nabla f(\int_Z g d\mu) $ from Fenchel's relation.
\end{proof}

\subsection{Stability of the dual solution}

\begin{lem}\label{lm:dual_lip}
For any $\lambda_1,\lambda_2 \in \mathcal{H}$ and $x_1,x_2\in X$, it holds that
\begin{equation*}
|u_{\lambda_1}(x_1) - u_{\lambda_2}(x_2)|\leq L_g  \|\lambda_1\| d_X(x_1,x_2) + M \| \lambda_1 - \lambda_2\| .
\end{equation*}
\end{lem}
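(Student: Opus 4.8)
The plan is to bound the difference $|u_{\lambda_1}(x_1) - u_{\lambda_2}(x_2)|$ by inserting the intermediate quantity $u_{\lambda_1}(x_2)$ and controlling the two resulting pieces separately, namely
\begin{equation*}
|u_{\lambda_1}(x_1) - u_{\lambda_2}(x_2)| \leq |u_{\lambda_1}(x_1) - u_{\lambda_1}(x_2)| + |u_{\lambda_1}(x_2) - u_{\lambda_2}(x_2)|.
\end{equation*}
The first term measures the sensitivity of $u_\lambda$ with respect to the base point $x$ for a fixed multiplier, and I expect it to be governed by the $L_g$-Lipschitz continuity of the set-valued map $\mathcal{Z}$ from Assumption \ref{ass2}(4); the second term measures the sensitivity with respect to $\lambda$ for a fixed point, and should be governed by the uniform bound $M$ on $\|g\|$.

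For the $\lambda$-term, I would use the standard fact that an infimum of a family of functions is nonexpansive in the sup-norm sense. Writing $u_{\lambda_i}(x_2) = \inf_{y \in Z_{x_2}} \langle \lambda_i, g(x_2,y)\rangle$, for any fixed $y$ we have by Cauchy--Schwarz $|\langle \lambda_1 - \lambda_2, g(x_2,y)\rangle| \leq \|\lambda_1 - \lambda_2\| \, \|g(x_2,y)\| \leq M\|\lambda_1 - \lambda_2\|$, and taking infima over $y$ on each side yields $|u_{\lambda_1}(x_2) - u_{\lambda_2}(x_2)| \leq M\|\lambda_1 - \lambda_2\|$, which matches the second term in the claimed bound exactly.

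For the $x$-term, fix $\lambda_1$ and recall that $u_{\lambda_1}(x) = \inf_{w \in \mathcal{Z}(x)} \langle \lambda_1, w\rangle$, since $\mathcal{Z}(x) = \{g(x,y) : y \in Z_x\}$. The $L_g$-Lipschitz continuity of $\mathcal{Z}$ gives $\mathcal{Z}(x_1) \subseteq \mathcal{Z}(x_2) + B_{\mathcal{H}}(0, L_g d_X(x_1,x_2))$ and symmetrically with the roles swapped. Thus for any $w_1 \in \mathcal{Z}(x_1)$ there is $w_2 \in \mathcal{Z}(x_2)$ with $\|w_1 - w_2\| \leq L_g d_X(x_1,x_2)$, whence $\langle \lambda_1, w_2\rangle \leq \langle \lambda_1, w_1\rangle + \|\lambda_1\| L_g d_X(x_1,x_2)$; taking the infimum gives $u_{\lambda_1}(x_2) \leq u_{\lambda_1}(x_1) + \|\lambda_1\| L_g d_X(x_1,x_2)$, and the reverse inclusion yields the matching lower bound, so $|u_{\lambda_1}(x_1) - u_{\lambda_1}(x_2)| \leq L_g \|\lambda_1\| d_X(x_1,x_2)$. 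Combining the two pieces through the triangle inequality gives the stated estimate.

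I do not anticipate a serious obstacle here, as the argument is a routine assembly of the nonexpansiveness of infima with the Lipschitz continuity of $\mathcal{Z}$; the only point requiring mild care is making sure the constant $\|\lambda_1\|$ (rather than $\|\lambda_2\|$ or a symmetrized quantity) appears on the $x$-term, which is why I deliberately insert $u_{\lambda_1}(x_2)$ rather than $u_{\lambda_2}(x_1)$ as the intermediate term so that the first difference is taken entirely at the multiplier $\lambda_1$. One should also confirm that all infima are attained or at least finite, which follows since $Z_x$ is compact by Assumption \ref{ass2}(3) and $g$ is continuous by Assumption \ref{ass2}(2), so $\mathcal{Z}(x)$ is a nonempty compact subset of $\mathcal{H}$.
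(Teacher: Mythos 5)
Your proof is correct and follows essentially the same route as the paper: the same triangle-inequality split through $u_{\lambda_1}(x_2)$, the Cauchy--Schwarz bound with the constant $M$ for the $\lambda$-term, and the $L_g$-Lipschitz continuity of $\mathcal{Z}$ for the $x$-term. The only cosmetic difference is that you exploit the set inclusion $\mathcal{Z}(x_1) \subseteq \mathcal{Z}(x_2) + B_{\mathcal{H}}(0, L_g d_X(x_1,x_2))$ directly (legitimate, since the ball in Definition \ref{def1}(3) is closed), whereas the paper passes through an $\epsilon$-minimizer and lets $\epsilon \to 0$ --- the two devices are interchangeable here.
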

\begin{proof}
By the triangle ineqaulity,
\begin{equation*}
|u_{\lambda_1}(x_1) - u_{\lambda_2}(x_2)| \leq |u_{\lambda_1}(x_1) - u_{\lambda_1}(x_2)| + |u_{\lambda_1}(x_2) - u_{\lambda_2}(x_2)|.
\end{equation*}
By the definition of $u_{\lambda}$, we have
\begin{equation*}
u_{\lambda_1} (x_1) - u_{\lambda_1}(x_2) = \inf_{y_1\in Z_{x_1}} \langle \lambda_1 \, , \, g(x_1, y_1)\rangle - \inf_{y_2\in Z_{x_2}} \langle \lambda_1 \, , \, g(x_2, y_2)\rangle.
\end{equation*}
Let $\tilde{y}^{\epsilon}_2$ be an $\epsilon$-minimizer of $\inf_{y_2\in Z_{x_2}} \langle \lambda_1 \, , \, g(x_2, y_2)\rangle$, with $\epsilon >0$. By the Lipschitz continuity of $\mathcal{Z}$, there exists $\tilde{y}_1^{\epsilon} \in Z_{x_1}$ such that
\begin{equation*}
\|g(x_1, \tilde{y}_1^{\epsilon}) - g(x_2, \tilde{y}_2^{\epsilon})\| \leq L_gd_X(x_1,x_2).
\end{equation*}
By the Cauchy-Schwarz inequality, we have
\begin{equation*}
u_{\lambda_1} (x_1) - u_{\lambda_1}(x_2) \leq  \langle \lambda_1 \, , \, g(x_1, \tilde{y}_1^{\epsilon}) - g(x_2, \tilde{y}_2^{\epsilon})\rangle + \epsilon \leq \|\lambda_1\| L_gd_X(x_1,x_2) +\epsilon.
\end{equation*}
By the arbitrariness of $\epsilon$, we have $ |u_{\lambda_1} (x_1) - u_{\lambda_1}(x_2)| \leq \|\lambda_1\| L_gd_X(x_1,x_2)$.

On the other hand,
\begin{equation*}
u_{\lambda_1} (x_2) - u_{\lambda_2}(x_2) = \inf_{y \in Z_{x_2}} \langle \lambda_1 \, , \, g(x_2, y)\rangle - \inf_{y\in Z_{x_2}} \langle \lambda_2 \, , \, g(x_2, y )\rangle \leq \sup_{y\in Z_{x_2}} \langle \lambda_1 - \lambda_2 \, , \, g(x_2, y )\rangle.
\end{equation*}
By the Cauchy–Schwarz inequality and the definition of $M$, we have that
\begin{equation*}
u_{\lambda_1} (x_2) - u_{\lambda_2}(x_2) \leq M \|\lambda_1 - \lambda_2\|.
\end{equation*}
The conclusion follows.
\end{proof}

\begin{lem}[Stability of the dual problem]\label{lm:dual_sta}
For any $m_0$, $m_1 \in \mathcal{P}(\Omega)$, we have
\begin{align}
\left| \mathcal{D}_{m_0}(\lambda^{*}(m_0)) - \mathcal{D}_{m_1}(\lambda^{*}(m_1)) \right| & \leq C^{*}L_g \, d_1(m_0,m_1), \label{eq:stable_1}
\\
\|\lambda^{*}(m_0) - \lambda^{*}(m_1)\|^2 & \leq 2 C^{*}L_g L \, d_1(m_0,m_1), \label{eq:stable_2}
\end{align}
where $C^{*}$ is the a priori bound of $\|\lambda^{*}(\cdot)\|$ obtained in Lemma \ref{lm:existence}
\end{lem}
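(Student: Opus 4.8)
The plan is to reduce both estimates to a single uniform bound comparing $\mathcal{D}_{m_0}$ and $\mathcal{D}_{m_1}$ evaluated at a common argument, and then to combine it with optimality (for the value estimate \eqref{eq:stable_1}) and with strong convexity (for the argument estimate \eqref{eq:stable_2}).

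First I would establish that for every $\lambda \in \mathcal{H}$,
\begin{equation*}
|\mathcal{D}_{m_0}(\lambda) - \mathcal{D}_{m_1}(\lambda)| \leq L_g \|\lambda\| \, d_1(m_0, m_1).
\end{equation*}
Indeed, by the definition of $\mathcal{D}_m$ the terms $f^{*}(\lambda)$ cancel, so that $\mathcal{D}_{m_0}(\lambda) - \mathcal{D}_{m_1}(\lambda) = \int_{X} u_{\lambda} \, d(m_1 - m_0)$. Taking $\lambda_1 = \lambda_2 = \lambda$ in Lemma \ref{lm:dual_lip} shows that $x \mapsto u_{\lambda}(x)$ is $L_g\|\lambda\|$-Lipschitz; after dividing by $L_g\|\lambda\|$ (when nonzero) it becomes a competitor in the Kantorovich--Rubinstein dual, and applying the definition of $d_1$ to $\pm u_{\lambda}$ yields the claimed inequality (the case $\lambda = 0$ being trivial since $u_0 = 0$).

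Next, for \eqref{eq:stable_1} I would use the optimality of $\lambda^{*}(m_0)$ for $\mathcal{D}_{m_0}$ together with the previous estimate: $\mathcal{D}_{m_0}(\lambda^{*}(m_0)) \leq \mathcal{D}_{m_0}(\lambda^{*}(m_1)) \leq \mathcal{D}_{m_1}(\lambda^{*}(m_1)) + L_g\|\lambda^{*}(m_1)\| \, d_1(m_0,m_1)$, and then bound $\|\lambda^{*}(m_1)\| \leq C^{*}$ via Lemma \ref{lm:existence}. Exchanging the roles of $m_0$ and $m_1$ gives the reverse inequality, and together they yield \eqref{eq:stable_1}.

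Finally, for \eqref{eq:stable_2} I would invoke the $1/L$-strong convexity of each $\mathcal{D}_{m_i}$ from Lemma \ref{lm:existence}: since $\lambda^{*}(m_i)$ minimizes $\mathcal{D}_{m_i}$ over the convex set $\textnormal{dom}(f^{*})$ and the other minimizer also lies in this set, the strong-convexity inequality at the minimizer gives $\mathcal{D}_{m_1}(\lambda^{*}(m_0)) \geq \mathcal{D}_{m_1}(\lambda^{*}(m_1)) + \tfrac{1}{2L}\|\lambda^{*}(m_0)-\lambda^{*}(m_1)\|^2$ and the symmetric inequality with indices swapped. Adding them, the cross differences $\mathcal{D}_{m_1}(\lambda^{*}(m_0)) - \mathcal{D}_{m_0}(\lambda^{*}(m_0))$ and $\mathcal{D}_{m_0}(\lambda^{*}(m_1)) - \mathcal{D}_{m_1}(\lambda^{*}(m_1))$ are each bounded by $L_g C^{*} d_1(m_0,m_1)$ using the uniform estimate, which produces $\tfrac{1}{L}\|\lambda^{*}(m_0)-\lambda^{*}(m_1)\|^2 \leq 2 L_g C^{*} d_1(m_0,m_1)$ and hence \eqref{eq:stable_2}. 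The main points requiring care are extracting the Lipschitz-in-$x$ property of $u_{\lambda}$ cleanly from Lemma \ref{lm:dual_lip} and matching it to the dual definition of $d_1$, and handling the strong-convexity modulus consistently so that the factor $\tfrac{1}{2L}$ at the minimizer inverts to give exactly the constant $2C^{*}L_g L$.
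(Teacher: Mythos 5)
Your proposal is correct and follows essentially the same route as the paper: the Lipschitz continuity of $u_{\lambda}$ in $x$ from Lemma \ref{lm:dual_lip} combined with the Kantorovich--Rubinstein dual definition of $d_1$, the a priori bound $C^{*}$ from Lemma \ref{lm:existence}, optimality for \eqref{eq:stable_1}, and the $1/L$-strong convexity of $\mathcal{D}_m$ at each minimizer, symmetrized and summed, for \eqref{eq:stable_2}. The only difference is organizational: the paper folds the strong-convexity term into a single combined inequality and symmetrizes it to get both estimates at once, whereas you separate the two derivations, which changes nothing in substance or in the constants.
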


\begin{proof}
According to Lemma \ref{lm:existence}, we know that $\|\lambda^{*}(m_0)\|$ and $\|\lambda^{*}(m_1)\|$ are smaller than $C^{*}$. Then, by Lemma \ref{lm:dual_lip}, $u_{\lambda^{*}(m_0)}(x)$ and $u_{\lambda^{*}(m_1)}(x)$ are $(C^{*}L_g)$-Lipschitz continuous with respect to $x$. Hence,
\begin{equation}\label{eq:stability_1}
\begin{split}
\mathcal{D}_{m_0}(\lambda^{*}(m_0)) = {} & f^{*}(\lambda^{*}(m_0)) - \int_{X} u_{\lambda^{*}(m_0)} (x) dm_0 (x) \\
= {} &  f^{*}(\lambda^{*}(m_0)) - \int_{X} u_{\lambda^{*}(m_0)} (x) dm_1(x) +  \int_{X} u_{\lambda^{*}(m_0)} (x) d(m_1-m_0)(x) \\
\geq {} & \mathcal{D}_{m_1}(\lambda^*(m_0)) -  C^{*}L_g d_1(m_0,m_1),
\end{split}
\end{equation}
where the third line is by the definition of the Kantorovich–Rubinstein distance.
Since $\lambda^*(m_1)$ minimizes $\mathcal{D}_{m_1}$ and since $\mathcal{D}_{m_1}$ is $1/L$-strongly convex, we have
\begin{equation} \label{eq:stability_1bis}
\mathcal{D}_{m_1}(\lambda^*(m_0))
\geq
\mathcal{D}_{m_1}(\lambda^{*}(m_1)) + \frac{1}{2L} \|\lambda^{*}(m_0) - \lambda^{*}(m_1)\|^2.
\end{equation}
Combining \eqref{eq:stability_1} and \eqref{eq:stability_1bis}, we obtain that
\begin{equation*}
\mathcal{D}_{m_0}(\lambda^{*}(m_0)) \geq \mathcal{D}_{m_1}(\lambda^{*}(m_1)) + \frac{1}{2L} \|\lambda^{*}(m_0) - \lambda^{*}(m_1)\|^2 -  C^{*}L_g d_1(m_0,m_1).
\end{equation*}
In particular, we have $ \mathcal{D}_{m_1}(\lambda^{*}(m_1)) - \mathcal{D}_{m_0}(\lambda^{*}(m_0)) \leq  C^{*}L_g d_1(m_0,m_1)$.  Exchanging the positions of $m_0$ and $m_1$ in \eqref{eq:stability_1}, we obtain
\begin{equation}\label{eq:stability_2}
\mathcal{D}_{m_1}(\lambda^{*}(m_1)) \geq \mathcal{D}_{m_0}(\lambda^{*}(m_0))+ \frac{1}{2L} \|\lambda^{*}(m_0) - \lambda^{*}(m_1)\|^2  - C^{*}L_g d_1(m_0,m_1).
\end{equation}
Inequality \eqref{eq:stable_1} follows immediately and \eqref{eq:stable_2} is deduced by summing \eqref{eq:stability_1}-\eqref{eq:stability_2}.
\end{proof}

\subsection{Directional derivative of the value function}

The value function of problem \eqref{pb:primal} is defined by
\begin{equation*}
V\colon \mathcal{P}^1(X) \to \R, \ m\mapsto \val\eqref{pb:primal}.
\end{equation*}
Our goal is to characterize the directional derivative of $V$. Define the following function:
\begin{align*}
v \colon \mathcal{P}^1(X)\times X \to \R, \ (m,x) \mapsto u_{\lambda^{*}(m)} (x).
\end{align*}

\begin{prop}
Assume that $G_m$ is closed for any $m \in \mathcal{P}^1(X)$. Then for any $m_0, m_1\in \mathcal{P}^1(X)$, we have
\begin{equation*}
\lim_{t\to 0^{+}} \frac{V(m_0 + t (m_1-m_0)) - V(m_0)}{t} = \int_{X} v(m_0,x) d (m_1-m_0)(x).
\end{equation*}
As a consequence, $v$ is the directional derivative of $V$, i.e.,
\begin{equation*}
V(m_1) - V(m_0) = \int_{t=0}^1 \int_{X} v(m_0+t(m_1-m_0), x ) d (m_1-m_0)(x) dt.
\end{equation*}
\end{prop}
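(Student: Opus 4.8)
The plan is to exploit strong duality to represent $V$ as a supremum of affine functions of $m$ and then run a Danskin-type argument, controlling the dependence of the maximizer on $m$ through the stability estimates already established. For $\lambda \in \text{dom}(f^{*})$, set $\Phi(\lambda,m) \coloneqq -\mathcal{D}_m(\lambda) = -f^{*}(\lambda) + \int_X u_\lambda \, dm$, which is affine --- indeed linear --- in $m$ for fixed $\lambda$. By the strong duality of Proposition \ref{prop:strong_duality} (valid for every $m \in \mathcal{P}^1(X)$ under the standing closedness assumption on $G_m$), we have $V(m) = \sup_{\lambda \in \text{dom}(f^{*})} \Phi(\lambda,m) = \Phi(\lambda^{*}(m),m)$, the supremum being attained at the unique dual solution $\lambda^{*}(m)$ from Lemma \ref{lm:existence}. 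Writing $m_t \coloneqq m_0 + t(m_1 - m_0)$ for $t \in [0,1]$, the goal is to bound the difference quotient $(V(m_t) - V(m_0))/t$ from below and above by quantities converging to $\int_X v(m_0,x)\, d(m_1-m_0)(x)$.

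For the lower bound I would use weak duality and affinity: since $V(m_t) \geq \Phi(\lambda^{*}(m_0), m_t)$ and $V(m_0) = \Phi(\lambda^{*}(m_0),m_0)$, we get
\begin{equation*}
V(m_t) - V(m_0) \geq \Phi(\lambda^{*}(m_0),m_t) - \Phi(\lambda^{*}(m_0),m_0) = t \int_X u_{\lambda^{*}(m_0)}\, d(m_1-m_0),
\end{equation*}
so the $\liminf$ of the difference quotient is at least $\int_X v(m_0,x)\,d(m_1-m_0)(x)$. For the upper bound I would evaluate at the maximizer $\lambda^{*}(m_t)$: using $V(m_t) = \Phi(\lambda^{*}(m_t),m_t)$ and $V(m_0) \geq \Phi(\lambda^{*}(m_t),m_0)$, the same affinity gives
\begin{equation*}
\frac{V(m_t) - V(m_0)}{t} \leq \int_X u_{\lambda^{*}(m_t)}\, d(m_1-m_0).
\end{equation*}
The crux is passing to the limit $t \to 0^{+}$ on the right. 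Here I would invoke the dual stability estimate \eqref{eq:stable_2} together with $d_1(m_t,m_0) = t\, d_1(m_1,m_0)$ to obtain $\|\lambda^{*}(m_t) - \lambda^{*}(m_0)\|^2 \leq 2C^{*}L_gL\, t\, d_1(m_1,m_0) \to 0$, and then the $\lambda$-Lipschitz bound of Lemma \ref{lm:dual_lip} taken with $x_1 = x_2$, giving $|u_{\lambda^{*}(m_t)}(x) - u_{\lambda^{*}(m_0)}(x)| \leq M\|\lambda^{*}(m_t)-\lambda^{*}(m_0)\|$ uniformly in $x$; integrating against the signed measure $m_1 - m_0$ (of total variation at most $2$) shows the right-hand side converges to $\int_X v(m_0,x)\,d(m_1-m_0)(x)$. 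Combining the two bounds yields the first assertion. This convergence of the maximizer is the main obstacle, and it is precisely what the H\"older stability of the dual solution is designed to supply.

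For the integral representation I would set $\psi(t) \coloneqq V(m_t)$ on $[0,1]$. By the value Lipschitz estimate of Theorem \ref{thm:main}(1) and $d_1(m_t,m_s) = |t-s|\, d_1(m_1,m_0)$, the map $\psi$ is Lipschitz, hence absolutely continuous. To identify its derivative, I would apply the first assertion at an arbitrary base point $m_s$, $s \in [0,1)$: since $m_1 - m_s = (1-s)(m_1 - m_0)$, the identity $m_{s+h} = m_s + \tfrac{h}{1-s}(m_1 - m_s)$ lets me rescale the difference quotient by $1/(1-s)$ and obtain the right derivative
\begin{equation*}
\psi'_+(s) = \frac{1}{1-s}\int_X v(m_s,x)\, d(m_1 - m_s)(x) = \int_X v(m_s,x)\, d(m_1-m_0)(x).
\end{equation*}
As $\psi$ is absolutely continuous it is differentiable a.e.\ with derivative equal to $\psi'_+$ wherever the latter exists, and $s \mapsto \int_X v(m_s,x)\,d(m_1-m_0)(x)$ is continuous (once more by Lemmas \ref{lm:dual_sta} and \ref{lm:dual_lip}); the fundamental theorem of calculus then gives $V(m_1) - V(m_0) = \psi(1) - \psi(0) = \int_0^1 \psi'_+(s)\, ds$, which is exactly the claimed formula.
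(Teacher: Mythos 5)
Your proof is correct; your lower bound coincides with the paper's (inequality \eqref{eq:dual1} is exactly your weak-duality-plus-affinity step), but your upper bound and your final integration step take a genuinely different route. For the upper bound, the paper works on the primal side: it takes solutions $\mu_0$, $\mu_1$ of the two problems (available by Proposition \ref{prop:strong_duality}), uses the competitor $\mu_t = \mu_0 + t(\mu_1 - \mu_0) \in \mathcal{P}_{m_t}(Z)$, the identity $\lambda^{*}(m_0) = \nabla f\big(\int_Z g \, d\mu_0\big)$, and the $L$-Lipschitz continuity of $\nabla f$, arriving at \eqref{eq:dual_2} with a quadratic remainder $LDt^2/2$; this uniform $O(t^2)$ remainder is what lets the paper compute left and right derivatives of $\bar{V}$ at \emph{every} $t$, conclude everywhere-differentiability with continuous derivative $\bar{v}$, and invoke the classical fundamental theorem of calculus. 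You instead stay entirely on the dual side with a Danskin-type argument, evaluating at the maximizer $\lambda^{*}(m_t)$ and passing to the limit via the H\"older stability \eqref{eq:stable_2} (giving $\|\lambda^{*}(m_t)-\lambda^{*}(m_0)\| = O(\sqrt{t})$) combined with the uniform estimate $|u_{\lambda}(x)-u_{\lambda'}(x)| \leq M\|\lambda-\lambda'\|$ of Lemma \ref{lm:dual_lip}; your remainder is only $O(\sqrt{t})$, which still suffices for the limit, and you compensate in the last step by trading everywhere-differentiability for the Lipschitz continuity of $\psi$ (from Theorem \ref{thm:main}(1)), a.e.\ differentiability, and the fundamental theorem of calculus for absolutely continuous functions --- a perfectly valid substitution, which moreover makes the continuity of $s \mapsto \int_X v(m_s,x)\, d(m_1-m_0)(x)$ a convenience rather than a necessity. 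It is worth pointing out that your dual-side upper bound is arguably more robust than the paper's: in the paper's primal estimate the first-order term is $t\int_Z g_{\lambda^{*}(m_0)}\, d(\mu_1-\mu_0)$, and since $\mu_1$ is supported on $\br_{\lambda^{*}(m_1)}$ rather than on $\br_{\lambda^{*}(m_0)}$, one only has $\int_Z g_{\lambda^{*}(m_0)}\, d\mu_1 \geq \int_X u_{\lambda^{*}(m_0)}\, dm_1$ --- an inequality pointing the wrong way --- so replacing that term by $t\int_X v(m_0,x)\, d(m_1-m_0)(x)$ in \eqref{eq:dual_2} requires a justification the paper does not spell out, whereas your argument closes the sandwich without needing it. In short: the paper's approach buys an explicit quadratic rate that drives its everywhere-differentiability argument; yours buys independence from the primal solutions (beyond strong duality itself) and a gap-free passage to the limit.
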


\begin{proof}
For any $t\in[0,1]$, let $m_t = m_0 + t (m_1-m_0)$.
By the strong duality, we have
\begin{equation*}
V(m_t) - V(m_0) = \mathcal{D}_{m_0}(\lambda^{*}(m_0)) - \mathcal{D}_{m_t}(\lambda^{*}(m_t)).
\end{equation*}
From \eqref{eq:stability_1}, we deduce that
\begin{equation}\label{eq:dual1}
\mathcal{D}_{m_0}(\lambda^{*}(m_0)) - \mathcal{D}_{m_t}(\lambda^{*}(m_t)) \geq \int_{X} v(m_0,x) d (m_t-m_0)(x) = t  \int_{X} v(m_0,x) d (m_1-m_0)(x).
\end{equation}
On the other hand, let $\mu_0$ and $\mu_1$ be solutions of \eqref{pb:primal} with $m=m_0$ and $m_1$ respectively. Let $\mu_t = \mu_0 + t(\mu_1-\mu_0)$.
It is obvious that $\pi_1\# \mu_t = m_t$. Therefore,
\begin{equation*}
V(m_t)-V(m_0) \leq f\left(\int_{Z} g d \mu_t\right) -f\left( \int_{Z} g d\mu_0\right).
\end{equation*}
By Proposition \ref{prop:strong_duality}, $\lambda^{*}(m_0) = \nabla f (\int_{Z} g d\mu_0)$.
Since $\nabla f$ is $L$-Lipschitz, it follows that
\begin{equation*}
f\left(\int_{Z} g d \mu_t\right) -f\left( \int_{Z} g d\mu_0\right) \leq  t  \int_{X} v(m_0,x) d (m_1-m_0)(x) + \frac{Lt^2}{2}\left\| \int_{Z} g d(\mu_1 - \mu_0) \right\|^2.
\end{equation*}
Recall the definition of $D$. Combining the two inequalities above, we have
\begin{equation}\label{eq:dual_2}
V(m_t)-V(m_0) \leq  t  \int_{X} v(m_0,x) d (m_1-m_0)(x) + \frac{LDt^2}{2}.
\end{equation}
We have $V(m_0)= -\mathcal{D}_{m_0}(\lambda^*(m_0))$ and $V(m_t)= -\mathcal{D}_{m_t}(\lambda^*(m_t))$.
Using \eqref{eq:dual1}-\eqref{eq:dual_2} and letting $t$ go to $0^{+}$, we obtain that
\begin{equation*}
\lim_{t\to 0^{+}} \frac{V(m_t) - V(m_0)}{t} = \int_{X} v(m_0,x) d (m_1-m_0)(x).
\end{equation*}

From Lemmas \ref{lm:dual_lip}-\ref{lm:dual_sta}, we deduce that the function $v(m,x)$ is continuous in $\mathcal{P}^1(X)\times X$ with respect to the distance $(d_1, d_X)$. Let us define two functions from $[0,1]$ to $\R$,
\begin{align*}
& \bar{V}\colon [0,1] \to \R, \, t \mapsto V(m_t);\\
& \bar{v} \colon [0,1] \to \R, \, t \mapsto \int_{X} v(m_t,x) d (m_1-m_0)(x).
\end{align*}
For any $0\leq t \leq T < 1$,
observe that $m_T = m_t + \frac{T-t}{1-t} (m_1-m_t)$ and $m_1 - m_t = (1-t) (m_1-m_0)$.
By using the same arguments as in \eqref{eq:dual1}-\eqref{eq:dual_2}, we have
\begin{equation*}
(T-t) \bar{v}(t) \leq \bar{V}(T) -\bar{V}(t) \leq (T-t) \bar{v}(t) + \frac{LD(T-t)^2}{2(1-t)^2}.
\end{equation*}
We deduce that $\bar{v}(t)$ is the right derivative of $\bar{V}$ at $t$ for any $t\in [0,1)$. By exchanging positions of $m_0$ and $m_1$, we can prove that $\bar{v}(t)$ is the left derivative of $\bar{V}$ at $t$ for any $t\in (0,1]$. Therefore, $\bar{V}$ is differentiable at each point on $[0,1]$ and $\bar{v}$ is its derivative. Since $\bar{v}$ is continuous, by the fundamental theorem of calculus \cite[Thm.\@ 7.21]{rudin}, we have that $\bar{V}(1) - \bar{V}(0) = \int_{t=0}^{1} \bar{v}(t) dt$.
\end{proof}

\section{Numerical approach} \label{sec:algo}

We present in this section our numerical method for solving \eqref{pb:primal}. The first step of resolution consists in discretizing $m$. We replace it by an empirical distribution $m_N$ and focus next on the resolution of \eqref{pb:primal_dis}.
By Theorem \ref{thm:main}(1), we have
\begin{equation} \label{eq:estimate_disc}
| \textbf{val}\eqref{pb:primal} - \textbf{val}\eqref{pb:primal_dis} | \leq L_g(C+LM) d_1(m,m_N).
\end{equation}
We give theoretical bounds for the minimal value of $d_1(m,m_N)$ in Subsection \ref{subsec:discretization}. Then we discuss the resolution of \eqref{pb:primal_dis} with the Frank-Wolfe algorithm in Subsection \ref{subsec:fw_algo}. Finally in Subsection \ref{subsec:sfw_algo} we propose to use a variant of the Frank-Wolfe algorithm, called Stochastic Frank-Wolfe (SFW) algorithm, introduced in \cite{bonnans2022large}. This method generates a solution to \eqref{pb:primal_dis} which is an empirical distribution.

\subsection{Discretization} \label{subsec:discretization}

In view of \eqref{eq:estimate_disc}, one should look for an empirical distribution
$ m_N = \frac{1}{N} \sum_{i=1}^N \delta_{x_i}$ that is as close as possible to $m$ for the $d_1$-distance.
This problem is commonly known as the optimal quantization problem, and for detailed information on this topic, we refer to \cite{gersho2012vector}.
Here, we present a slightly modified version of an optimal quantization result obtained in \cite[Prop.\@ 12]{merigot2016minimal}. For any subset $A$ of $X$, we denote by $r_N(A)$ the minimum radius $r$ required to cover $A$ with $N$ closed balls of radius $r$. It is defined by
\begin{equation*}
r_N (A)\coloneqq \inf_{x\in A^N} \min \left\{r\geq 0 \, \Big| \, A \subseteq \bigcup_{i=1}^N B_X(x_i,r) \right\}.
\end{equation*}
The upper box-counting dimension (or the upper Minkowski dimension) of $A$ \cite[p.\@ 41]{falconer2004fractal} is defined as follows:
\begin{equation*}
\bar{D}(A) \coloneqq \inf\left\{ \bar{D}>0 \; \big|\; \exists \bar{C}>0 \text{ such that } r_N(A)\leq \bar{C}N^{-1/\bar{D}} ,\; \forall N\in \mathbb{N}_{+} \right\}.
\end{equation*}

\begin{lem}\label{lm:quantization}
Let $m\in \mathcal{P}^1(X)$, and let $A\subseteq X$ be the support of $m$.
There exists
a sequence $(m_N)_{N\geq 1}$ of empirical distributions on $X$ such that the following holds:
\begin{enumerate}
\item If $\bar{D}(A)>1$, then there exists a constant $\tilde{C}_1$ such that for any $N\geq 1$,
\begin{equation*}
d_1(m,m_N) \leq \tilde{C}_1 N^{-\frac{1}{\bar{D}(A)}}.
\end{equation*}
\item If $\bar{D}(A)=1$, then there exists a constant $\tilde{C}_2$ such that for any $N\geq 1$,
\begin{equation*}
d_1(m,m_N) \leq \tilde{C}_2 N^{-1}\log N.
\end{equation*}
\item If $\bar{D}(A)<1$, then there exists a constant $\tilde{C}_3$ such that for any $N\geq 1$,
\begin{equation*}
d_1(m,m_N) \leq \tilde{C}_3 N^{-1}.
\end{equation*}
\end{enumerate}
\end{lem}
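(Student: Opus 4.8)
The plan is to reduce the equal-weight approximation to a purely geometric covering estimate and then to insert the box-counting bound, splitting into three regimes according to the sign of $\bar{D}(A)-1$. First I would fix dyadic scales $\delta_j = R\,2^{-j}$, where $R$ bounds the diameter of (a bounded portion of) $A$, and, using that the covering radius $r_M(A)$ controls covering numbers, build a nested sequence of finite Borel partitions $\mathcal{P}_0 \prec \mathcal{P}_1 \prec \cdots \prec \mathcal{P}_J$ of $A$ in which every cell of $\mathcal{P}_j$ has diameter $\le 2\delta_j$ and $|\mathcal{P}_j| \le M_j$, with $M_j$ the number of balls of radius $\delta_j$ needed to cover $A$. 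Choosing a center $c_Q$ in each cell $Q$ yields the coarsened (unequal-weight) measure $m_J = \sum_{Q\in\mathcal{P}_J} m(Q)\,\delta_{c_Q}$, for which transporting each cell's mass to its center gives $d_1(m,m_J)\le 2\delta_J$.

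The real difficulty is that $m_N$ must be an \emph{empirical} (equal-weight) measure, so the weights $m(Q)$ cannot be kept, and a naive rounding of these weights at a single fine scale forces a correction of order $M_J/N$ that ruins the rate. To avoid this, I would place the $N$ points by a cumulative/quantile rule along a depth-first ordering of the leaves of the partition tree, arranging that the number $n_Q$ of points falling in the subtree of \emph{every} cell $Q$ satisfies $|n_Q - N\,m(Q)| < 1$ simultaneously (this uses that subtrees are contiguous in the DFS order). Writing $\nu_J=\tfrac1N\sum_i\delta_{z_i}$ for the resulting equal-weight measure, I would bound $d_1(m_J,\nu_J)$ by the tree-flow inequality, in which the flow across the edge above a cell $Q\in\mathcal{P}_j$ equals $|m(Q)-n_Q/N|<1/N$ while the corresponding edge length is $\le 2\delta_{j-1}$. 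This telescopes, the point being that each $1/N$ rounding error travels only within a parent cell, and yields the covering-based estimate (a variant of \cite[Prop.~12]{merigot2016minimal})
\[
d_1(m,\nu_J) \;\le\; 2\delta_J + C\sum_{j=1}^{J}\delta_j\,\frac{M_j}{N}.
\]

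Finally I would fix an admissible exponent $\bar{D}\ge \bar{D}(A)$ from the definition of the upper box-counting dimension, so that $M_j \le c\,2^{j\bar{D}}$, reducing the estimate to $d_1 \lesssim 2^{-J} + N^{-1}\sum_{j=1}^{J} 2^{j(\bar{D}-1)}$. The geometric sum $\sum_j 2^{j(\bar{D}-1)}$ is exactly what produces the trichotomy: for $\bar{D}>1$ it is $\asymp 2^{J(\bar{D}-1)}$, and optimizing $2^{J}\asymp N^{1/\bar{D}}$ gives $N^{-1/\bar{D}}$; for $\bar{D}<1$ it converges, so taking $J$ maximal with $M_J\lesssim N$ makes the spatial term $2^{-J}\lesssim N^{-1/\bar{D}}\le N^{-1}$ negligible and leaves $N^{-1}$; the borderline $\bar{D}=1$ gives $\sum_j 1 = J \asymp \log N$, hence $N^{-1}\log N$. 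I expect the main obstacle to be precisely this equal-weight constraint: it forces the quantile placement together with the telescoped control of the rounding errors, and it is responsible for the logarithmic loss at $\bar{D}(A)=1$, which is the most delicate regime to pin down.
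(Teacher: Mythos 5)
Your proposal is correct and takes essentially the same route as the paper: the paper's proof consists of citing \cite[Prop.\@ 12]{merigot2016minimal} and replacing the Minkowski inequality for the Wasserstein-2 distance by the triangle inequality for $d_1$, and your multiscale nested partitions, quantile placement ensuring $|n_Q - N m(Q)|<1$ on every subtree, and telescoped tree-flow bound leading to the geometric sum $\sum_j 2^{j(\bar{D}-1)}$ reconstruct exactly that argument in the $d_1$ setting, trichotomy included. The only fine points are constant- or exponent-level: obtaining \emph{nested} partitions with $|\mathcal{P}_j|\lesssim M_j$ requires nested nets with a top-down parent assignment (naively intersecting coverings would let the cardinalities grow multiplicatively), and the covering bound $M_j \le c\,2^{j\bar{D}}$ is only guaranteed for admissible exponents $\bar{D}>\bar{D}(A)$ when the infimum in the definition of $\bar{D}(A)$ is not attained — a subtlety already present in the lemma's statement and in the cited result.
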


\begin{proof}
This follows from the proof presented in \cite[Prop.\@ 12]{merigot2016minimal}, with the only difference being that in the final inequality, we employ the triangle inequality for the $d_1$-distance instead of the Minkowski inequality for the Wasserstein-2 distance.
\end{proof}

\begin{rem}
If $A$ is a subset of a smooth $d$-dimensional submanifold of a Euclidean space, then $\bar{D}(A) \leq  d$. This estimate is deduced from \cite[p.\@ 48 (i)-(ii)]{falconer2004fractal}.
\end{rem}

\subsection{Frank-Wolfe algorithm}
\label{subsec:fw_algo}

For general convex optimization problems, the Frank-Wolfe algorithm relies on the resolution of a sequence of linearized problems, obtained by replacing the cost function of the problem by a first-order Taylor approximation of it. In the context of problem \eqref{pb:primal_dis}, the linearized problem is of the general form:
\begin{equation}\label{pb:linear}
\inf_{\mu\in \mathcal{P}_{m_N}(Z)} \left\langle  \lambda , \int_{Z} g d\mu\right\rangle,
\end{equation}
for some $\lambda \in  \nabla f(\mathcal{H})$.

A key observation from Lemma \ref{lm:inf_int} is that a solution of the linearized problem, denoted by $\mu_{\lambda}$, can be obtained as in the proof of Lemma \ref{lm:inf_int}, in the simple case where $m$ is a finitely-supported probability measure: for all $i=1,\ldots,N$, find $y_i \in \br_{\lambda}(x_i)$ and set ${\mu}_\lambda= \frac{1}{N} \sum_{i=1}^N \delta_{(x_i,y_i)}$.
Therefore, one can consider applying the Frank-Wolfe algorithm to solve \eqref{pb:primal_dis}, in which the main task is to solve \eqref{pb:linear}.

\smallskip

\begin{algorithm}[H]
\caption{Frank-Wolfe Algorithm }
\label{alg:FW}
\begin{algorithmic}
\STATE{Initialization: $\mu^0 \in \mathcal{P}_{m_N}(Z).$ Set $K\geq 1$.}
\FOR{$k= 0,1,2,\ldots ,K-1$}

\STATE{Compute $\lambda^k = \nabla f \left( \int_{Z} g d\mu^k \right)$.}

\STATE{Solve \eqref{pb:linear} for $\lambda=\lambda^k$, the solution is denoted by $\mu_{\lambda^k}$.}

\STATE{
{Choose $\omega_k \in [0,1]$.} }

\STATE{Set $\mu^{k+1} = (1-\omega_k)\mu^k + \omega_k \mu_{\lambda^{k}}$.}

\ENDFOR
\end{algorithmic}
\end{algorithm}

\begin{rem}
If we take $\omega_k = 1/(k+1)$ for all $k$, then it is easy to see that $\mu^{K} = \frac{1}{K}\sum_{k=0}^{K-1} \mu_{\lambda^k}$. We recover the fictitious play of \cite{cardaliaguet2017learning}, applied to the Lagrangian discretization of first-order MFGs.
\end{rem}

\begin{lem}
Let Assumption \ref{ass1} hold true. In Algorithm \ref{alg:FW}, we set $\omega_k = 2/(k+2)$ for all $k$. Then for any $K\geq 1$,
\begin{equation*}
f\left(\int_{Z} g d\mu^K \right) - \textnormal{\textbf{val}}\eqref{pb:primal_dis} \leq \frac{2LD}{K}.
\end{equation*}
\end{lem}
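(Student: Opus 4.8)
The plan is to run the classical $\mathcal{O}(1/K)$ analysis of the conditional gradient method, carried out in the ambient variable given by the aggregate $\int_Z g\,d\mu\in\mathcal{H}$. Write $J(\mu)=f\big(\int_Z g\,d\mu\big)$, which is convex in $\mu$ since $f$ is convex and $\mu\mapsto\int_Z g\,d\mu$ is affine, and set $h_k\coloneqq J(\mu^k)-\val\eqref{pb:primal_dis}\ge 0$. I would first establish a single one-step recursion of the form $h_{k+1}\le(1-\omega_k)h_k+\tfrac{L\omega_k^2 D}{2}$, and then close the argument by induction on $k$ using the step size $\omega_k=2/(k+2)$.

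For the descent step, I would use that $\nabla f$ is $L$-Lipschitz, so $f$ satisfies the quadratic upper bound $f(\beta_2)\le f(\beta_1)+\langle\nabla f(\beta_1),\beta_2-\beta_1\rangle+\tfrac{L}{2}\|\beta_2-\beta_1\|^2$, exactly as already exploited in the proofs of Theorem \ref{thm:first} and Lemma \ref{lem:stable}. Applying it with $\beta_1=\int_Z g\,d\mu^k$ and $\beta_2=\int_Z g\,d\mu^{k+1}$, and using the update rule to write $\int_Z g\,d\mu^{k+1}-\int_Z g\,d\mu^k=\omega_k\big(\int_Z g\,d\mu_{\lambda^k}-\int_Z g\,d\mu^k\big)$, I obtain
\[
J(\mu^{k+1}) \le J(\mu^k) + \omega_k\left\langle \lambda^k,\, \int_Z g\,d\mu_{\lambda^k}-\int_Z g\,d\mu^k\right\rangle + \frac{L\omega_k^2}{2}\left\| \int_Z g\,d\mu_{\lambda^k}-\int_Z g\,d\mu^k\right\|^2.
\]

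The only non-routine ingredient, and the step I expect to be the main (if modest) obstacle, is bounding the squared norm in the second-order term by $D$. I would argue that for any $\mu_1,\mu_2\in\mathcal{P}(Z)$ one has $\int_Z g\,d\mu_1-\int_Z g\,d\mu_2=\int_{Z\times Z}(g(z_1)-g(z_2))\,d(\mu_1\otimes\mu_2)$, since each marginal integrates $g$ against a probability measure; passing the norm inside the Bochner integral (justified by Lemma \ref{lm:Bochner}) and using $\|g(z_1)-g(z_2)\|^2\le D$ yields $\big\|\int_Z g\,d\mu_1-\int_Z g\,d\mu_2\big\|^2\le D$. For the first-order term, I would invoke that $\mu_{\lambda^k}$ solves the linearized problem \eqref{pb:linear}, so $\langle\lambda^k,\int_Z g\,d\mu_{\lambda^k}\rangle\le\langle\lambda^k,\int_Z g\,d\mu\rangle$ for every $\mu\in\mathcal{P}_{m_N}(Z)$; combining this with the gradient inequality $\langle\lambda^k,\int_Z g\,d\mu-\int_Z g\,d\mu^k\rangle\le J(\mu)-J(\mu^k)$ coming from convexity of $f$, and then taking the infimum over $\mu\in\mathcal{P}_{m_N}(Z)$, gives $\langle\lambda^k,\int_Z g\,d\mu_{\lambda^k}-\int_Z g\,d\mu^k\rangle\le -h_k$. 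Substituting these two bounds into the display produces the announced recursion.

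Finally, I would solve the recursion. Because $\omega_0=1$, the term $(1-\omega_0)h_0$ vanishes and $h_1\le\tfrac{LD}{2}\le\tfrac{2LD}{3}$, which forgets the initial condition and serves as the base case for the claim $h_k\le\tfrac{2LD}{k+2}$. For the inductive step, inserting $\omega_k=2/(k+2)$ into the recursion gives $h_{k+1}\le\tfrac{2LD(k+1)}{(k+2)^2}$, and the elementary inequality $(k+1)(k+3)\le(k+2)^2$ shows this is at most $\tfrac{2LD}{k+3}$, completing the induction. In particular $h_K\le\tfrac{2LD}{K+2}\le\tfrac{2LD}{K}$, which is exactly the claimed bound.
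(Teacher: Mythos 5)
Your proof is correct, but it does not follow the paper's route: the paper establishes this lemma by a one-line citation to Proposition 3.4 of \cite{bonnans2022large}, whereas you reconstruct, self-contained within the present framework, the classical conditional-gradient analysis that the cited proposition encapsulates. All the key points of your argument check out: the iterates remain feasible ($\mu^{k+1}\in\mathcal{P}_{m_N}(Z)$ because the marginal map $\mu\mapsto\pi_1\#\mu$ is linear, so $h_k\geq 0$); your product-measure identity $\int_Z g\,d\mu_1-\int_Z g\,d\mu_2=\int_{Z\times Z}\big(g(z_1)-g(z_2)\big)\,d(\mu_1\otimes\mu_2)$ combined with the triangle inequality for the Bochner integral gives $\big\|\int_Z g\,d\mu_1-\int_Z g\,d\mu_2\big\|\leq\sqrt{D}$, hence the squared norm is bounded by $D$ (consistent with the paper's definition of $D$ as a supremum of \emph{squared} norms — a point worth stating explicitly, since your phrasing compresses it); the first-order bound $\langle\lambda^k,\int_Z g\,d\mu_{\lambda^k}-\int_Z g\,d\mu^k\rangle\leq -h_k$ follows as you say from optimality of $\mu_{\lambda^k}$ for \eqref{pb:linear} (whose solvability in the finitely-supported case is exactly what the proof of Lemma \ref{lm:inf_int} for $m\in\mathcal{P}_\delta(X)$ provides) together with the gradient inequality; and the induction is sound, with $\omega_0=1$ erasing the initial error and $(k+1)(k+3)\leq(k+2)^2$ closing the step, yielding $h_K\leq 2LD/(K+2)$, which is in fact marginally sharper than the stated $2LD/K$. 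What the paper's citation buys is brevity and alignment with the companion work, where the same machinery is extended to the stochastic variant (Lemma \ref{lm:convergence}); what your derivation buys is a proof readable without external references, using only Assumption A, Lemma \ref{lm:Bochner}, and the structure of the linearized problem.
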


\begin{proof}
This is a consequence of \cite[Prop.\@ 3.4]{bonnans2022large}.
\end{proof}

\subsection{Stochastic Frank-Wolfe algorithm}
\label{subsec:sfw_algo}

In Algorithm \ref{alg:FW}, at each iteration, we generate the output by taking a convex combination of the previous iteration's result and the solution of \eqref{pb:linear}. This process requires us to add $N$ points from $\br_{\lambda^k}(x_i)$, for $i=1,\ldots,N$, to stock the support of solution at each iteration. As a consequence, this approach can lead to a  memory overflow issue,  as $K$ going to infinity. The large support of $\mu^K$ will also raise the difficulty of Step 2 in Algorithm \ref{alg:Recover}, in which we will take $\bar{\mu}_0 = \mu^K$.
To address this issue, we will use the stochastic Frank-Wolfe algorithm \cite{bonnans2022large} to \eqref{pb:primal_dis}. This approach will enable us to obtain an approximate empirical solution \eqref{pb:primal_dis}, and can effectively handle the large support of $\mu^K$.

\begin{lem}\label{lm:empirical}
Let $\mu \in \mathcal{P}(Z)$. Then
$\mu$ lies in $\mathcal{P}_{m_N}(Z)$ if and only if there exists $\mu_i \in \mathcal{P}(Z_{x_i})$ for any $i=1,\ldots, N$ such that $\mu = \frac{1}{N}\sum_{i=1}^N \delta_{x_i}\otimes \mu_i$.
\end{lem}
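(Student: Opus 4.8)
The plan is to prove the two implications separately, treating the ``if'' direction as a direct verification and the ``only if'' direction via the disintegration theorem (Theorem \ref{thm:disintegration}) specialized to a finitely supported marginal.

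For the ``if'' direction, I would start from a measure of the form $\mu = \frac{1}{N}\sum_{i=1}^N \delta_{x_i}\otimes \mu_i$ with each $\mu_i \in \mathcal{P}(Z_{x_i})$, and check two things. First, each $\delta_{x_i}\otimes \mu_i$ is a probability measure concentrated on $\{x_i\}\times Z_{x_i}$, which is a subset of $Z = \mathrm{Graph}(F)$ since $Z_{x_i} = F(x_i)$; hence $\mu \in \mathcal{P}(Z)$. Second, the first marginal satisfies $\pi_1\#(\delta_{x_i}\otimes \mu_i) = \delta_{x_i}$ because $\mu_i$ has total mass one, so $\pi_1\#\mu = \frac{1}{N}\sum_{i=1}^N \delta_{x_i} = m_N$, giving $\mu \in \mathcal{P}_{m_N}(Z)$.

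For the ``only if'' direction, I would take $\mu \in \mathcal{P}_{m_N}(Z)$ and apply Theorem \ref{thm:disintegration} (extended to functions bounded from below via Remark \ref{rem1}) to obtain a family $\{\mu_x\}_{x\in X}$ with $\mu_x \in \mathcal{P}(Y)$ supported on $Z_x$ and such that $\int_Z h\, d\mu = \int_X \int_{Z_x} h(x,y)\, d\mu_x(y)\, dm_N(x)$ for every nonnegative Borel $h$. Setting $\mu_i \coloneqq \mu_{x_i} \in \mathcal{P}(Z_{x_i})$ and using that $m_N = \frac{1}{N}\sum_{i=1}^N \delta_{x_i}$ collapses the outer integral into a finite sum, I obtain $\int_Z h\, d\mu = \frac{1}{N}\sum_{i=1}^N \int_{Z_{x_i}} h(x_i,y)\, d\mu_i(y)$. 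Since the right-hand side is precisely $\int_Z h\, d\big(\frac{1}{N}\sum_{i=1}^N \delta_{x_i}\otimes \mu_i\big)$, and the equality holds for all nonnegative Borel test functions $h$, the two measures coincide, yielding the desired representation.

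The main obstacle is technical rather than conceptual: I must make sure the disintegration is invoked for a class of test functions broad enough to identify the measures (nonnegative or bounded-below Borel functions, via Remark \ref{rem1}), and that the kernels $\mu_{x_i}$ are genuinely probability measures carried by $Z_{x_i}$, which is exactly what Theorem \ref{thm:disintegration} provides. A minor point to keep in mind is the possibility that the listed points $x_1,\dots,x_N$ are not all distinct; since the disintegration kernel is uniquely determined $m_N$-a.e., one may group repeated points without affecting the argument, so I would simply note that the identity of measures obtained by testing against all nonnegative Borel functions is insensitive to this grouping.
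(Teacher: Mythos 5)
Your proposal is correct and follows essentially the same route as the paper: the ``if'' direction by directly computing $\pi_1\#(\delta_{x_i}\otimes\mu_i)=\delta_{x_i}$ and using linearity of the push-forward, and the ``only if'' direction by disintegrating $\mu$ via Theorem \ref{thm:disintegration} (with Remark \ref{rem1}), collapsing the outer integral into a finite sum over the atoms of $m_N$, and identifying the two measures by testing---the only cosmetic difference being that you test against nonnegative Borel functions while the paper tests against bounded continuous ones and invokes Fubini, both of which suffice to identify Borel probability measures. Your additional remark on possibly repeated points $x_i$ is a sound (and welcome) clarification that the paper leaves implicit.
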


\begin{proof}
If $\mu_i\in \mathcal{P}( Z_{x_i})$, then $\pi_1\#( \delta_{x_i}\otimes \mu_i )= \delta_{x_i}$. Since the push-forward operator $\#$ is linear, we have that $\pi_1\#(\frac{1}{N}\sum_{i=1}^N \delta_{x_i}\otimes  \mu_i) = \frac{1}{N}\sum_{i=1}^N \delta_{x_i} = m_N$.

Conversely, let us assume that $\mu \in \mathcal{P}_{m_N}(Z)$. By Theorem \ref{thm:disintegration} and its remark, we can conclude that there exists $\mu_{x_i}\in \mathcal{P}(Z_{x_i})$ for $i=1,\ldots, N$ such that for any bounded and continuous function $h\colon Z\to \R$, we have
\begin{equation*}
\int_{Z}h d\mu = \frac{1}{N} \sum_{i=1}^N \int_{Z_{x_i}} h(x_i, y_i) d\mu_{x_i}(y_i).
\end{equation*}
Applying Fubini's theorem to the equality above, we have
\begin{equation*}
\int_{Z}h \, d\mu = \frac{1}{N}
\sum_{i=1}^N \int_{Z} h \, d \left( \delta_{x_i}\otimes \mu_{x_i}\right) = \int_{Z} h \, d \left( \frac{1}{N} \sum_{i=1}^N \delta_{x_i}\otimes \mu_{x_i}\right).
\end{equation*}
This implies that $\mu = \frac{1}{N} \sum_{i=1}^N \delta_{x_i}\otimes  \mu_{x_i}$.
\end{proof}

According to Lemma \ref{lm:empirical} and Fubini's theorem, the discretized problem \eqref{pb:primal_dis} is equivalent to
\begin{equation}\label{pb:dirac_rex}
\inf_{\mu_i \in \mathcal{P}(Z_{x_i})}   f \left( \frac{1}{N} \sum_{i=1}^N \int_{Z_{x_i}} g(x_i,y_i) d\mu_i(y_i) \right).
\end{equation}
Problem \eqref{pb:dirac_rex} is the randomized relaxation of an $N$-agent optimization problem as investigated in \cite{bonnans2022large},
\begin{equation}\label{pb:dirac}
\inf_{y\in \prod_{i=1}^N Z_{x_i}}   f \left( \frac{1}{N} \sum_{i=1}^N g(x_i,y_i) \right).
\end{equation}
Problem \eqref{pb:dirac} is equivalent to a version of problem \eqref{pb:dirac_rex} in which the probability measures $\mu_i$ are restricted to be Dirac measures. In particular, we can associate with each feasible element $y=(y_i)_{i=1,\ldots,N} \in \prod_{i=1}^N Z_{x_i}$ (for problem \eqref{pb:dirac}) the tuple $(\delta_{x_i})_{i=1,\ldots,N}$, which is feasible for \eqref{pb:dirac_rex}, and the probability distribution $\frac{1}{N} \sum_{i=1}^N \delta_{(x_i,y_i)}$, which is feasible for \eqref{pb:primal_dis}.

We apply the following Stochastic Frank-Wolfe algorithm, investigated in \cite{bonnans2022large}, to solve problems \eqref{pb:dirac_rex} and \eqref{pb:dirac}.  Let Bern$(\omega)$ be the Bernoulli distribution with a parameter $\omega\in [0,1]$.

\medskip

\begin{algorithm}[H]
\caption{Stochastic Frank-Wolfe Algorithm }
\label{alg:SFW}
\begin{algorithmic}
\STATE{Initialization: $ y^0 \in \prod_{i=1}^N Z_{x_i}.$ Set $K\geq 1$.}
\FOR{$k= 0,1,2,\ldots ,K-1$}

\STATE{Compute $\lambda^k = \nabla  f(\frac{1}{N} \sum_{i=1}^N   {g} (x_i, y^k_i))$.}
\FOR{$i=1,2,\ldots,N$}

\STATE{
Find $\bar{y}_i^k \in \br_{\lambda^k}(x_i)$.
}

\ENDFOR

\STATE{Choose $n_k \in \mathbb{N}^*$.
{Set $\omega_k = 2/(k+2)$.} }
\FOR{$j=1,2,\ldots,n_k$}

\FOR{$i=1,2,\ldots,N$}
\STATE{Simulate $P^{k,j}_i \sim \text{Bern}(\omega_k)$, independently of all previously defined random variables.}
\STATE{Set $\hat{y}_i^{k,j} = (1-P^{k,j}_i) y_i^k + P^{k,j}_i \bar{y}_i^k$.}
\ENDFOR
\STATE{Define $\hat{y}^{k,j}= (\hat{y}^{k,j}_i)_{i=1,\ldots,N}$.}

\ENDFOR
\STATE{Find $y^{k+1} \in \argmin \big\{ f(\frac{1}{N} \sum_{i=1}^N   {g} (x_i, y_i)) \, \big| \, y \in \{ \hat{y}^{k,j},\, j=1,2,\ldots,n_k \} \big\}$.}

\ENDFOR

\end{algorithmic}
\end{algorithm}

\medskip

The interest of Algorithm \ref{alg:SFW} is that it provides an approximate solution to \eqref{pb:dirac}, and the associated empirical distribution serves as a reliable approximate solution of the problem \eqref{pb:dirac_rex}, as demonstrated in the following lemma. Additionally, this empirical distribution has a fixed support size $N$, which does not increase with the iteration number, making the algorithm memory-efficient.

\begin{lem}\label{lm:convergence}
in Algorithm \ref{alg:SFW}, whatever the numbers $(n_k)_{k \in \mathbb
N}$, we have for any $K=1,2,\ldots, 2N$ that
\begin{equation*}
\mathbb{E}\left[ f\left(\frac{1}{N} \sum_{i=1}^N   {g} (x_i, y_i^K)\right)  \right] - \textnormal{\textbf{val}}\eqref{pb:dirac_rex} \leq \frac{4LD}{K}.
\end{equation*}
\end{lem}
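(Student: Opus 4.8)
The plan is to prove this convergence result by recognizing Algorithm \ref{alg:SFW} as a specific instance of the stochastic Frank-Wolfe method analyzed in \cite{bonnans2022large}, and then invoking the corresponding convergence theorem from that reference. First I would verify that the objective function of problem \eqref{pb:dirac_rex}, namely $f\bigl(\frac{1}{N}\sum_{i=1}^N \int_{Z_{x_i}} g(x_i,y_i) d\mu_i(y_i)\bigr)$, fits the structural assumptions required in \cite{bonnans2022large}: the outer function $f$ is convex with $L$-Lipschitz gradient by Assumption \ref{ass1}(1), and the aggregate takes the separable form of an average over $N$ agents, which is exactly the setting for which the stochastic Frank-Wolfe algorithm was designed. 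The key point is that the linearized subproblem at each iteration decouples across agents into the individual best-response problems $\bar{y}_i^k \in \br_{\lambda^k}(x_i)$, matching the inner loop of Algorithm \ref{alg:SFW}.

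Next I would identify the relevant constants. The curvature-type constant in the convergence bound of \cite{bonnans2022large} is controlled by the Lipschitz modulus $L$ of $\nabla f$ together with the diameter of the aggregate set, which here is governed by $D = \sup_{z_1,z_2 \in Z}\|g(z_1)-g(z_2)\|^2$. The factor of $4LD$ (rather than the $2LD$ appearing in the deterministic Algorithm \ref{alg:FW} bound) reflects the additional variance introduced by the Bernoulli sampling $P_i^{k,j}\sim\text{Bern}(\omega_k)$ used to construct the randomized candidates $\hat{y}^{k,j}$. I would match the step-size choice $\omega_k = 2/(k+2)$ to the schedule assumed in the reference and check that the expectation is taken over all the sampling randomness through iteration $K$.

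The proof would then conclude by a direct citation: the estimate is a consequence of the main convergence theorem in \cite{bonnans2022large} (analogous to how the companion deterministic bound was obtained from \cite[Prop.\@ 3.4]{bonnans2022large}), applied with the present data. The restriction $K = 1,2,\ldots,2N$ is a feature of the analysis in \cite{bonnans2022large}: because each agent's coordinate can only be updated finitely often before the empirical distribution saturates its $N$ support points, the linear-rate guarantee is valid only up to this horizon, and I would state the proof as holding for that range of $K$ precisely to stay within the regime covered by the cited result.

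The main obstacle I anticipate is not a deep mathematical difficulty but rather the bookkeeping of translating between the three equivalent problem formulations. Specifically, one must carefully confirm that an approximate solution of the finite-agent problem \eqref{pb:dirac} produced by the algorithm corresponds, via the identification $y \mapsto \frac{1}{N}\sum_{i=1}^N \delta_{(x_i,y_i)}$, to a genuine feasible point of the relaxed problem \eqref{pb:dirac_rex} with the same objective value, and that the value $\textbf{val}\eqref{pb:dirac_rex}$ appearing on the left-hand side is indeed the quantity against which \cite{bonnans2022large} measures suboptimality. Since Lemma \ref{lm:empirical} and the surrounding discussion already establish the equivalence between \eqref{pb:primal_dis}, \eqref{pb:dirac_rex}, and the Dirac-restricted problem \eqref{pb:dirac}, this reduces to verifying that the hypotheses of the cited convergence result are met, so the bulk of the work is assumption-checking rather than new estimation.
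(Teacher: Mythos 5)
Your proposal takes the same route as the paper, whose entire proof is the single citation ``This is from \cite[Thm.\@ 3.7]{bonnans2022large}''; the assumption-checking and problem-identification you outline is exactly the (implicit) content of that citation, so the argument is correct. One minor caveat that does not affect validity: the restriction $K \le 2N$ in the cited theorem comes from absorbing the $\mathcal{O}(1/N)$ gap between \eqref{pb:dirac} and its randomized relaxation \eqref{pb:dirac_rex} into the $\mathcal{O}(1/K)$ Frank--Wolfe rate (whence the factor $4LD$ rather than $2LD$), not from a ``saturation'' of the $N$ support points as you conjecture.
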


\begin{proof}
This is from \cite[Thm.\@ 3.7]{bonnans2022large}.
\end{proof}

\begin{rem}
Lemma \ref{lm:convergence} provides a convergence result for Algorithm \ref{alg:SFW} in terms of expectation.
An estimate of the following quantity can be found in \cite[Thm.\@ 3.7]{bonnans2022large}:
\begin{equation*}
\mathbb{P}\left[ f\left(\frac{1}{N} \sum_{i=1}^N   {g} (x_i, y_i^K)\right)  \geq \textnormal{\textbf{val}}\eqref{pb:dirac_rex} + \epsilon + \frac{4LD}{K}\right],
\end{equation*}
for a given $\epsilon >0$. In particular, this probability can be made arbitrarily small, provided that the numbers $n_k$ are large enough.
\end{rem}

In order to obtain an approximate solution of \eqref{pb:primal}, we combine Algorithm \ref{alg:SFW} with Algorithm \ref{alg:Recover}.
Let us consider the outcome $y^K$ of Algorithm \ref{alg:SFW} after $K$ iterations, for $1 \leq K \leq 2N$ and for arbitrary numbers $n_k \geq 1$ of simulations.
Let $\mu^K_N = \frac{1}{N}\sum_{i=1}^N \delta_{(x_i,y_i^K)}$.
Moving on to Algorithm \ref{alg:Recover}, we utilize the following inputs: $m_0 = m_N$, $m_1 = m$, and $\bar{\mu}_0 = \mu_N^K$. The output of Algorithm \ref{alg:Recover} is denoted as $\tilde{\mu}^K$, which is an element of the set $\mathcal{P}_m(Z)$.
We have the following convergence result for the combination of Algorithm \ref{alg:Recover} and \ref{alg:SFW}.

\begin{thm}\label{thm:main2}
Let Assumptions \ref{ass1}-\ref{ass2} hold true, and let $m\in \mathcal{P}^1(Z)$. Then,
\begin{equation*}
\mathbb{E}\left[f\left(\int_{Z}g d \tilde{\mu}^K\right)\right] - \textnormal{\textbf{val}}\eqref{pb:primal} \leq \frac{4LD}{K} + 2 L_g ( C + L M) d_1(m_N,m).
\end{equation*}
\end{thm}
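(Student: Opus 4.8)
The plan is to chain together the convergence guarantee for the Stochastic Frank-Wolfe algorithm (Lemma \ref{lm:convergence}) with the deterministic error estimate of the bridging method (Lemma \ref{lem:stable} and Theorem \ref{thm:main}(1)), taking care to isolate precisely where the randomness enters. First I would observe that since $\mu_N^K = \frac{1}{N}\sum_{i=1}^N \delta_{(x_i,y_i^K)}$, we have the identity $f\big(\frac{1}{N}\sum_{i=1}^N g(x_i,y_i^K)\big) = f\big(\int_Z g d\mu_N^K\big)$, so that Lemma \ref{lm:convergence} can be rewritten as
\[
\mathbb{E}\left[f\left(\int_Z g d\mu_N^K\right)\right] - \val\eqref{pb:dirac_rex} \leq \frac{4LD}{K}.
\]
Using the equivalence between \eqref{pb:primal_dis} and \eqref{pb:dirac_rex} established via Lemma \ref{lm:empirical} and Fubini's theorem, I would then replace $\val\eqref{pb:dirac_rex}$ by $\val\eqref{pb:primal_dis}$.

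The second ingredient is the bridging estimate, and here lies the point requiring the most care. The bridging step (Algorithm \ref{alg:Recover}) is \emph{deterministic} once the input $\bar{\mu}_0 = \mu_N^K$ is fixed. Thus, for each realization of the SFW randomness, the algorithm produces $\tilde{\mu}^K = s\#\nu$, where by the construction in Steps 1--2 and the proof of Lemma \ref{lm:OT} the measure $\nu$ lies in $\Pi(\mu_N^K, m)$, solves \eqref{pb:OT}, and hence satisfies $\int_{Z\times X} d_X(x,x') d\nu = d_1(m_N,m)$. Applying Lemma \ref{lem:stable} pathwise (with $m_0 = m_N$, $m_1 = m$) yields, for every realization, the pointwise bound
\[
f\left(\int_Z g d\tilde{\mu}^K\right) \leq f\left(\int_Z g d\mu_N^K\right) + L_g(C+LM)\, d_1(m_N,m).
\]
Since this inequality holds surely, taking expectations preserves it, and combining with the SFW bound above gives
\[
\mathbb{E}\left[f\left(\int_Z g d\tilde{\mu}^K\right)\right] \leq \val\eqref{pb:primal_dis} + \frac{4LD}{K} + L_g(C+LM)\, d_1(m_N,m).
\]

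Finally, I would invoke the value-stability estimate of Theorem \ref{thm:main}(1) with $m_0 = m_N$ and $m_1 = m$, namely $\val\eqref{pb:primal_dis} \leq \val\eqref{pb:primal} + L_g(C+LM)\, d_1(m_N,m)$, and substitute it into the last display to recover the claimed bound, the two copies of $L_g(C+LM)\,d_1(m_N,m)$ combining into the stated factor $2$. I expect the main obstacle to be conceptual rather than computational: each estimate is already available, but one must resist plugging a fixed $\epsilon_0$ into Theorem \ref{thm:main}(2), since the suboptimality of $\mu_N^K$ is itself a random quantity. The clean route is to exploit the deterministic, pathwise character of the bridging bound from Lemma \ref{lem:stable}, which allows the single expectation coming from Lemma \ref{lm:convergence} to propagate unchanged through the remainder of the argument.
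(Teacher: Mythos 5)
Your proposal is correct and follows essentially the same route as the paper's proof: the almost-sure (pathwise) application of Lemma \ref{lem:stable} to the deterministic bridging step with $\bar{\mu}_0 = \mu_N^K$, followed by taking expectations, invoking Lemma \ref{lm:convergence} together with the identity $\val\eqref{pb:dirac_rex} = \val\eqref{pb:primal_dis}$, and concluding via Theorem \ref{thm:main}(1) rather than Theorem \ref{thm:main}(2) (precisely for the reason you identify, namely that the suboptimality of $\mu_N^K$ is random). Your intermediate constant $\frac{4LD}{K}$ is in fact the one consistent with Lemma \ref{lm:convergence} and the final statement, whereas the paper's displayed intermediate bound reads $\frac{2LD}{K}$, an apparent typo.
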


\begin{proof}
Since
$ f\left( \int_{Z} g d\mu_N^K \right) = f\left( \frac{1}{N}\sum_{i=1}^N g(x_i,y^K_i)\right)$,
by Lemma \ref{lem:stable}, we have
\begin{equation*}
f\left( \int_{Z} g \tilde{\mu}^K \right) - f\left( \frac{1}{N}\sum_{i=1}^N g(x_i,y^K_i)\right) \leq L_g ( C + L M) d_1(m_N,m), \qquad \text{almost surely}.
\end{equation*}
Taking expectation on both sides of the previous inequality, and applying Lemma \ref{lm:convergence} and the relation $\textbf{val}\eqref{pb:dirac_rex}= \textbf{val}\eqref{pb:primal_dis}$, we have
\begin{equation*}
\mathbb{E}\left[f\left(\int_{Z}g d \tilde{\mu}^K\right)\right] - \textbf{val}\eqref{pb:primal_dis}\leq  \frac{2LD}{K} +  L_g ( C + L M) d_1(m_N,m).
\end{equation*}
Combining with Theorem \ref{thm:main}(1), the proof is complete.
\end{proof}

\begin{rem}
The realization of Algorithm \ref{alg:Recover} can be simplified in Theorem \ref{thm:main2} thanks to the empirical structure of $m_0$ and $\bar{\mu}_0$, as noted in Remark \ref{rem:empirical}.
\end{rem}

\section{Examples and numerical results}
\label{sec:examples}

\subsection{The traffic assignment problem}

The traffic assignment problem is a non-atomic game whose potential formulation takes the form of problem \eqref{pb:primal}. We describe it briefly in this subsection. Consider a finite set of nodes $\mathcal{N}$ and a finite set of edges $\mathcal{E} \subseteq \mathcal{N} \times \mathcal{N}$. The model is a static model that describes how the agents move on the network, taking into account their origins and destinations as well as the congestion on each arc.

We fix a subset $X$ of $\mathcal{N} \times \mathcal{N}$. Each parameter $x=(x_1,x_2) \in X$ represents an origin-destination pair. Next, we denote by $Y$ the set of subsets of $\mathcal{E}$.
For each $x$, we consider a set of possible paths connecting $x_1$ and $x_2$, denoted $Z_x$. Mathematically, we simply describe a path as a subset of $\mathcal{E}$, so $Z_x \subseteq Y$.

For the definition of the potential problem, we define $\mathcal{H}= \R^{\mathcal{E}}$. The function $g$ is defined by $g \colon (x,y) \in Z \mapsto (g(x,y)_e)_{e \in \mathcal{E}} \in \mathcal{H}$, where
\begin{equation*}
g(x,y)_e = \begin{cases}
\begin{array}{cl}
1, & \text{ if $e \in y$}, \\
0, & \text{ otherwise.}
\end{array}
\end{cases}
\end{equation*}
In words, $g(x,y)_e = 1$ is the edge $e$ belongs to the path $y$, 0 otherwise. Next, we fix a family of functions $\phi_e \colon [0,\infty) \rightarrow [0,\infty)$, parametrized by $e \in \mathcal{E}$. We assume that these functions are non-decreasing and we fix a primitive $\Phi_e$ for each of them. The functions $\Phi_e$ are convex. Finally, we define $f \colon \mathcal{H} \rightarrow \R$ by
\begin{equation*}
f(q) = \sum_{e \in \mathcal{E}} \Phi_e(q_e).
\end{equation*}

With these definitions at hand, it remains to interpret the optimality conditions for the associated MFO problem. Let us consider $\bar{\mu} \in \mathcal{P}_m(Z)$ and let $\bar{\lambda}= \nabla f( \int_Z g d \bar{\mu}) \in \mathcal{H}$. A direct calculation shows that
\begin{equation*}
\bar{\lambda}_e
= \phi_e ( {q}_e ), \quad
\text{where: } q_e= \sum_{\begin{subarray}{c}(x,y) \in Z \\ e \in y \end{subarray}}\bar{\mu}(x,y).
\end{equation*}
We can interpret $q_e$ as the proportion of agents using the edge $e$. We interpret $\phi_e$ as a function that gives the travelling time on the edge $e$ in function of the congestion $q_e$. So here the dual variable $\bar{\lambda}$ has a natural interpretation as a vector containing all the travelling times of the network. Finally, for any $x \in X$, we have
\begin{equation*}
\inf_{y \in Z_x} \ \langle \bar{\lambda}, g(x,y) \rangle
= \inf_{y \in Z_x} \ \sum_{e \in y} \bar{\lambda}_e.
\end{equation*}
Here $\sum_{e \in y} \bar{\lambda}_e$ describes the total duration of the path $y$. As a consequence, solving the MFO problem is equivalent to find $\bar{\mu}$ such that for any $x$, $\bar{\mu}_x$ is supported by the optimal paths (among those connecting $x_1$ to $x_2$), the travel time of a path $y$ begin defined by the above relations. This notion of equilibrium is known as Wardrop equilibrium in the literature. Using the MFO setting, we recover the well-known equivalence between Wardrop equilbria and their potential formulation, see \cite[Chapter 3]{sheffi1985urban}. We mention here that the modelling is different (but equivalent) to the standard one in which one rather describes the distribution of the agents with respect to the edges, instead of using the distribution with respect to the paths. We also note that the Frank-Wolfe algorithm is a very standard algorithm for solving those problems, see \cite[Section 5.2]{sheffi1985urban}.

\subsection{Lagrangian MFGs}

We propose here a class of potential Lagrangian MFGs. As before, we first formulate the potential problem, in the form of an MFO problem and interpret next the optimality conditions as a non-atomic game.
There is a large amount of literature on Lagrangian MFGs, we refer the reader to
\cite{benamou2017variational,bonnans2023lagrangian,cannarsa2018existence,santambrogio2021cucker,sarrazin2022lagrangian} and the references therein. Our intention here is only to formulate a model that fits with the framework of MFO problems, we do not check the corresponding assumptions, which must be done on a case-by-case basis.

Let us fix a domain $\Omega\subseteq\R^d$ and a final time $T > 0$. Let $\textnormal{AC}([0,T],\R^d)$ be the set of all absolutely continuous functions from $[0,1]$ to $\R^d$. For any $x \in \Omega$, we denote,
\begin{equation*}
Y \coloneqq \{ y \in \textnormal{AC}([0,T],\R^d) \,\mid \, y(t)\in \Omega, \, \forall t\in[0,T] \}, \qquad Z_x \coloneqq \{ y \in Y \, \mid \, y(0)=x \}.
\end{equation*}
Let $Z = \left\{ (x,y) \, \mid \, x\in \Omega, \, y \in Z_x \right\}$.
Let $m \in \mathcal{P}(\Omega)$ be the distribution of the initial states of the players. 
We fix three functions $L \colon \R^d \rightarrow \R$, $h \colon \R^d \rightarrow \R^k$, and $\Phi \colon \R^k \rightarrow \R$. We define $g \colon (x,y) \in Z \rightarrow (g_1(x,y),g_2(x,y)) \in \R \times L^2(0,T;\R^k)$ by
\begin{equation*}
g_1(x,y)= \int_0^T L(\dot{y}(t)) dt
\quad \text{and} \quad
g_2(x,y) = h \circ y
\end{equation*}
and we define $f \colon \R \times L^2(0,T;\R^k) \rightarrow \R$ as
\begin{equation*}
f(q_1,q_2)
= q_1 + \int_0^T \Phi(q_2(t)) dt.
\end{equation*}
With these definitions of $X$, $Y$, $Z_x$, $g$, and $f$, we have a full description of an MFO problem.
Let us write the corresponding optimality conditions. Let $\bar{\mu} \in \mathcal{P}_m(Z)$ and let $\bar{\lambda}= \nabla f(\int_Z g d \bar{\mu})$. Then, $\bar{\lambda}
= (\bar{\lambda}_1,\bar{\lambda}_2)$, with $\bar{\lambda}_1= 1$ and
\begin{equation*}
\bar{\lambda}_2(t)
= \nabla \Phi \Big( \int_Z h(y(t)) d \bar{\mu}(x,y) \Big).
\end{equation*}
For any $t \in [0,T]$, denote by $e_t \colon Z \rightarrow \R^d$ the mapping defined by $e_t(x,y)= y(t)$. Then $\bar{\lambda}_2$ is equivalently defined by
\begin{equation*}
\bar{\lambda}_2(t)
= \nabla \Phi \Big( \int_{\Omega} h(y') d \bar{m}_t(y') \Big), \quad
\text{where: } m_t= e_t \sharp \bar{\mu}.
\end{equation*}
In this context, the minimization problem in \eqref{pb:fix} is equivalent to the following optimal control problem:
\begin{equation*}
\inf_{y \in Z_x} \
\int_0^T
\Big[ L(\dot{y}(t))
+ \Big\langle \nabla \Phi \big( {\textstyle \int_{\Omega}} h d \bar{m}_t \big), h(y(t)) \Big\rangle \Big] dt.
\end{equation*}
Let us note that the above problem is not convex in general. It could be solved by dynamic programming in the situation where the dimension of $y$ is moderate.

\subsection{Numerical results for a competition problem with a non-renewable resource}

\subsubsection*{Model}
We consider a Lagrangian MFG in which the agents exploit their own stock of an exhaustible resource. The model is taken from \cite{graewe2022maximum}. We fix a time horizon $[0,T]$ where $T\in [0,+\infty)$ (the case $T=\infty$ investigated in \cite{graewe2022maximum} is not considered here).
The state variable of a representative agent is the level of the stock of resource at any time, denoted $(X_t^q)_{t \in [0,T]}$ and the control is the speed of extraction at any time, denoted $q$.
The dynamic of a given producer with an initial position $x_0\geq 0$ is described as follows:
\begin{equation*}
X_{t}^q \coloneqq x_0 - \int_{0}^t q_{\tau} d\tau, \qquad t\in [0,T],
\end{equation*}
where $q_t \geq 0$, for any $t \in [0,T]$.
We impose that $X_T^q \geq 0$, which implies that $X_t^q \geq 0$ at any time.

We define the set of aggregate production, denoted as $\mathcal{G}$, by
\begin{equation*}
\mathcal{G}\coloneqq \Big\{ Q \in \mathbb{L}^2([0,T],\R) \,\mid\, 0\leq Q(t) \leq \frac{1}{2}, \, \forall t\in [0,T]\Big\}.
\end{equation*}
The price of the resource for this representative producer depends on its extracting speed and an aggregate production $Q\in \mathcal{G}$,
\begin{equation*}
p_t \coloneqq 1 - q_t - \epsilon Q_t, \qquad t\in [0,T],
\end{equation*}
where $\epsilon \in (0,1)$ is a constant. The gain of this representative producer writes,
\begin{equation*}
\int_{0}^T e^{-rt} q_t(1-q_t-\epsilon Q_t) dt.
\end{equation*}
where $r\geq 0$ is a discount rate.
Therefore, given an aggregate production $Q\in \mathcal{G}$ and an initial position $x_0\geq 0$, we can formulate an optimal control problem associated with this representative producer,
\begin{equation}\label{pb:OC}
\begin{cases}
\begin{array}{rl}
{\ \displaystyle \inf_{q\in \mathcal{G}}} & J^{Q}(q) \coloneqq {\displaystyle \int_{0}^T} e^{-rt} q_t(q_t-1+\epsilon Q_t) dt; \\[1.0em]
\ \text{s.t.} &{\displaystyle \int_{0}^T} q_t dt \leq x_0.
\end{array}
\end{cases}
\end{equation}

\begin{lem}\label{lm:existence_uniqueness}
Problem \eqref{pb:OC} has a unique solution $q^{Q}(x_0)$. Moreover, $0 \leq q^Q(x_0)(t) \leq \frac{1}{2}$, for a.e.\@ $t \in (0,T)$.
\end{lem}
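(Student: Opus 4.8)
The plan is to get existence from the direct method of the calculus of variations and uniqueness from strict convexity, working with the reformulation
\begin{equation*}
J^{Q}(q) = \int_0^T e^{-rt} q_t^2 \dd t - \int_0^T e^{-rt}(1-\epsilon Q_t) q_t \dd t
\end{equation*}
over the feasible set $\mathcal{F} \coloneqq \big\{ q \in \mathcal{G} \mid \int_0^T q_t \dd t \leq x_0\big\}$. The first term is a continuous, positive-definite quadratic form on $\mathbb{L}^2([0,T],\R)$ and the second is a continuous linear form, so $J^{Q}$ is continuous and convex. First I would record the elementary properties of $\mathcal{F}$: it is non-empty, since $q\equiv 0$ lies in it (using $x_0\geq 0$); it is convex, as the intersection of $\mathcal{G}$ with the half-space $\{\int_0^T q_t \dd t \leq x_0\}$; it is bounded in $\mathbb{L}^2$, since every $q\in\mathcal{G}$ satisfies $\|q\|_{\mathbb{L}^2}^2 = \int_0^T q_t^2 \dd t \leq T/4$; and it is strongly closed, since both $\mathcal{G}$ and the half-space are cut out by continuous $\mathbb{L}^2$-constraints. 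Because $\mathbb{L}^2([0,T],\R)$ is a Hilbert space, hence reflexive, the bounded, closed, convex set $\mathcal{F}$ is weakly compact.

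For existence, I would take a minimizing sequence $(q^n)\subseteq\mathcal{F}$ and, by weak compactness, extract a weakly convergent subsequence with limit $q^{*}\in\mathcal{F}$. Since $J^{Q}$ is convex and strongly continuous, it is weakly lower semicontinuous, so $J^{Q}(q^{*}) \leq \liminf_{n} J^{Q}(q^n) = \inf_{\mathcal{F}} J^{Q}$, and $q^{*}$ is a minimizer.

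For uniqueness, I would invoke strict convexity of $J^{Q}$. Because $e^{-rt} \geq e^{-rT} > 0$ on $[0,T]$, the map $q \mapsto \int_0^T e^{-rt} q_t^2 \dd t$ is strictly convex on $\mathbb{L}^2$ (the pointwise strict convexity of $s\mapsto s^2$ integrated against a strictly positive weight), and the linear term does not affect this. Thus two distinct minimizers would produce a strictly smaller value at their midpoint, which still belongs to the convex set $\mathcal{F}$, a contradiction. This yields a unique minimizer, denoted $q^{Q}(x_0)$.

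Finally, the bound $0\leq q^{Q}(x_0)(t)\leq 1/2$ for a.e.\@ $t\in(0,T)$ holds because $q^{Q}(x_0)\in\mathcal{G}$. To make the role of the upper bound transparent, I would complement this with the first-order conditions: writing $\eta\geq 0$ for the multiplier of the integral constraint, pointwise minimization gives $q^{Q}(x_0)(t) = \Proj_{[0,1/2]}\big(\tfrac12(1-\epsilon Q_t) - \tfrac12\,\eta\, e^{rt}\big)$ a.e., and since $0\leq Q_t\leq 1/2$ and $\eta\geq 0$ force the argument to be $\leq 1/2$, the upper constraint is in fact never active. The only genuinely delicate point is the strict-convexity step, which crucially uses the finite horizon $T<\infty$ to keep $e^{-rt}$ bounded away from zero (consistent with the authors excluding $T=\infty$); without it, both uniqueness and the compactness underlying the direct method would be in jeopardy.
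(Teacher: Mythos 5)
Your proof is correct, and for existence and uniqueness it follows essentially the same skeleton as the paper: both arguments establish that the admissible set $\{q\in\mathcal{G} \mid \int_0^T q_t\,dt\le x_0\}$ is non-empty, closed, and convex in $\mathbb{L}^2([0,T],\R)$, and then conclude by convex-analytic arguments. The paper is slightly more economical at this stage: it observes that $J^Q$ is \emph{strongly} convex (the weight satisfies $e^{-rt}\ge e^{-rT}>0$, so the quadratic part is coercive) and invokes \cite[Cor.\@ 3.23]{brezis2011functional}, so it never needs the boundedness of the feasible set on which your weak-compactness step relies; your hands-on direct method (bounded closed convex set, weak lower semicontinuity of a continuous convex functional, strict convexity for uniqueness) proves the same thing with a bit more bookkeeping. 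The genuine divergence is in the ``moreover'' bound $0\le q^Q(x_0)\le \frac{1}{2}$. You dispatch it as immediate from $q^Q(x_0)\in\mathcal{G}$ --- legitimate given the problem as literally stated, since the cap $q\le\frac{1}{2}$ is built into $\mathcal{G}$ --- and then sketch a KKT/projection argument to show the cap is inactive. The paper instead proves the bound by a truncation argument: setting $q'=\min\{q,\frac{1}{2}\}$, it checks that $q'$ is feasible, uses the fact that $q_0\mapsto q_0(q_0-1+\epsilon Q_t)$ is increasing on $[\frac{1}{2},\infty)$ to get $J^Q(q')\le J^Q(q)$, and concludes $q=q'$ by uniqueness. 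This establishes the stronger, model-relevant fact that the bound $\frac{1}{2}$ is self-enforcing --- the solution would satisfy it even if only $q\ge 0$ were imposed --- which is what makes the ``moreover'' clause non-vacuous and what your feasibility one-liner does not capture. Your KKT sketch aims at the same point, but as written it is only a sketch: the existence of the multiplier $\eta$ needs justification (and can degenerate, e.g.\@ when $x_0=0$ the only feasible control is $q\equiv 0$ and the constraint qualification fails), whereas the truncation argument needs nothing beyond the uniqueness you have already proved. One small overstatement on your side: strict convexity of $q\mapsto\int_0^T e^{-rt}q_t^2\,dt$ only requires $e^{-rt}>0$ almost everywhere, not a bound away from zero; what the finite horizon $T<\infty$ actually buys is strong convexity, hence coercivity, which is the property the paper exploits.
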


\begin{proof}
It is easy to see that $\mathcal{G}$ is a non-empty and convex subset of $\mathbb{L}^2([0,T],\R)$. Following \cite[Thm.\@ 3.12]{rudin}, if $(f_n\in \mathcal{G})_{n\geq 1}$ converges to $f$ in $\mathbb{L}^2$ sense, then there exists a subsequence of $(f_n)_{n\geq 1}$ converges to $f$ a.e. As a consequence, $f$ lies in $\mathcal{G}$. Therefore, $\mathcal{G}$ is closed. Furthermore, by H\"older's inequality, we obtain that $ \{q\in \mathbb{L}^2([0,T],\R) \, \mid \, \int_{0}^T q_t dt \leq x_0\}$ is non-empty, convex and closed in $\mathbb{L}^2([0,T],\R)$. It follows that the admissible set of problem \eqref{pb:OC} is non-empty, closed and convex in Hilbert space $\mathbb{L}^2([0,T],\R)$. On the other hand, the cost function $J^{Q}(\cdot)$ is strongly convex. Then the existence of the solution of \eqref{pb:OC} comes from \cite[Cor.\@ 3.23]{brezis2011functional} and the uniqueness is by the strong convexity of $J^{Q}$.

Let $q$ be the solution to \eqref{pb:OC}. Define $q'(t)= \min \{ q(t), \frac{1}{2} \}$, for a.e.\@ $t \in (0,T)$. Since $q' \leq q$, $q'$ is also feasible for problem \eqref{pb:OC}. Moreover, the running cost $q_0 \mapsto q_0(q_0-1 + \varepsilon Q_t)$ is increasing for $q_0 \geq \frac{1}{2}$. As a consequence, $J^Q(q') \leq J^Q(q)$. Therefore, $q'$ is optimal, and since the solution is unique, we have $q= q'$, which proves that $q \leq \frac{1}{2}$.
\end{proof}

Let $m \in \mathcal{P}([0,+\infty))$ denote the distribution of the initial conditions of the producers.
The aggregate production rate corresponding to $ q^{Q}$ is given by
\begin{equation*}
Q^Q_t \coloneqq \int_{0}^{\infty} q_t^{Q} (x_0)d m(x_0), \qquad \forall t\in [0,T].
\end{equation*}
Following \cite{graewe2022maximum}, we call Nash equilibrium a solution $Q^*$ to the fix-point problem:
\begin{equation}\label{pb:fixed_point}
Q^{*} = Q^{Q^{*}}, \qquad Q^{*}\in \mathcal{G}.
\end{equation}

\subsubsection*{Potential problem}

In this paragraph, we find an optimization problem associated with the fixed point problem \eqref{pb:fixed_point}, which is a particular case of problem \eqref{pb:primal}. Let us specific  metric spaces and admissible sets in \eqref{pb:primal} associated with \eqref{pb:fixed_point}:
\begin{equation*}
X = [0,\infty), \qquad Y = \mathcal{G}, \qquad F(x) = \left\{q\in\mathcal{G}\,\mid\, \int_{0}^T q_t dt \leq x \right\},\qquad Z= \text{Graph}(F), \qquad Z_x = F(x).
\end{equation*}
Let us define the separable Hilbert space $\mathbb{L}^2_{e^{-rt}}([0,T])$ \cite[Example.\@ 4.5(b)]{rudin}:
\begin{equation*}
\mathbb{L}^2_{e^{-rt}}([0,T])\coloneqq \left\{ \zeta\colon [0,T]\to \R \text{ is Lebesgue measurable}  \,\Big|\,  \int_{0}^T e^{-rt}|\zeta(t)|^2dt <+\infty \right\},
\end{equation*}
with a scalar product,
\begin{equation*}
\langle f_1, f_2\rangle_{\mathbb{L}^2_{e^{-rt}}([0,T])} = \int_{0}^T  e^{-rt} f_1(t)f_2(t) dt.
\end{equation*}
It is easy to check that $Y=\mathcal{G}\subseteq  \mathbb{L}^2_{e^{-rt}}([0,T])$. Then, in \eqref{pb:primal}, we set $\mathcal{H} =  \R\times \mathbb{L}^2_{e^{-rt}}([0,T])$,
\begin{align*}
&g \colon Z\to \mathcal{H}, (x,q) \mapsto \left( \int_{0}^T e^{-rt} (q_t^2-q_t) dt, \, q \right), \\
&f\colon \mathcal{H}\to \R,  (y_1,y_2) \mapsto  y_1 + \frac{\epsilon}{2}\|y_2\|_{\mathbb{L}^2_{e^{-rt}}([0,T]) }^2.
\end{align*}
Therefore, problem \eqref{pb:primal} associated with \eqref{pb:fixed_point} writes:
\begin{equation}\label{pb:Potential}
\inf_{\mu\in \mathcal{P}_{m}(Z)} \int_{Z}\int_{0}^T e^{-rt}(q_t^2-q_t)dt d\mu(x,q) + \frac{\epsilon}{2} \int_{0}^T e^{-rt} \left(\int_{Z} q_t d\mu(x,q)\right)^2 dt.
\end{equation}


\begin{prop} \label{prop:potential}
If $\bar{\mu}$ is a solution of problem \eqref{pb:Potential}, then $Q^{*} = \int_{Z} q d\bar{\mu}(x,q)$ is a Nash equilibrium of the optimal exploitation of exhaustible resources problem, i.e., $Q^{*}$ is a solution of \eqref{pb:fixed_point}.
\end{prop}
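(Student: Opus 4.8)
The plan is to read off the fixed-point relation \eqref{pb:fixed_point} from the first-order optimality conditions of \eqref{pb:Potential}, exploiting the observation that the inner minimization in the equilibrium equation \eqref{pb:fix} is exactly the individual optimal control problem \eqref{pb:OC}. I would begin by naming $Q^{*} = \int_{Z} q\, d\bar{\mu}(x,q)$, understood as a Bochner integral in $\mathbb{L}^2_{e^{-rt}}([0,T])$, and computing the associated dual variable. Since $f(y_1,y_2) = y_1 + \tfrac{\epsilon}{2}\|y_2\|^2$, its gradient is $\nabla f(y_1,y_2) = (1,\epsilon y_2)$, so that $\bar{\lambda} := \nabla f\big(\int_Z g\, d\bar{\mu}\big) = (1, \epsilon Q^{*})$, the second component being $\epsilon Q^{*}$ precisely because $g_2(x,q) = q$.

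The conceptual heart of the argument is to identify $g_{\bar{\lambda}}(x,\cdot)$ with the cost functional $J^{Q^*}$ of \eqref{pb:OC}. Expanding the pairing in $\mathcal{H} = \R \times \mathbb{L}^2_{e^{-rt}}([0,T])$ and regrouping the terms, one finds
\begin{equation*}
g_{\bar{\lambda}}(x,q) = \int_0^T e^{-rt} q_t\big(q_t - 1 + \epsilon Q^*_t\big)\, dt = J^{Q^*}(q).
\end{equation*}
Since $Z_x = F(x)$ is exactly the admissible set of \eqref{pb:OC} with $x_0 = x$, the best-response set $\br_{\bar{\lambda}}(x) = \argmin_{q \in Z_x} g_{\bar{\lambda}}(x,q)$ is the set of minimizers of \eqref{pb:OC}, which is the singleton $\{q^{Q^*}(x)\}$ by the uniqueness part of Lemma \ref{lm:existence_uniqueness}.

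Next I would invoke the optimality conditions. Assuming Assumption \ref{ass1} has been verified for this model, Corollary \ref{cor:fixed_point} applies to the solution $\bar{\mu}$: as $\bar{\lambda} = \nabla f(\int_Z g\, d\bar{\mu})$, the support condition in \eqref{pb:fix} reads $\textnormal{supp}(\bar{\mu}_x) \subseteq \br_{\bar{\lambda}}(x) = \{q^{Q^*}(x)\}$ for $m$-a.e.\ $x$, hence $\bar{\mu}_x = \delta_{q^{Q^*}(x)}$ for $m$-a.e.\ $x$, where $\bar{\mu}_x$ is the disintegration of $\bar{\mu}$ from Theorem \ref{thm:disintegration}. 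To conclude that $Q^{*} = Q^{Q^*}$, I would test against an arbitrary $\zeta \in \mathbb{L}^2_{e^{-rt}}([0,T])$ and write
\begin{equation*}
\langle \zeta, Q^*\rangle = \int_Z \langle \zeta, q\rangle\, d\bar{\mu} = \int_X \langle \zeta, q^{Q^*}(x)\rangle\, dm(x) = \langle \zeta, Q^{Q^*}\rangle,
\end{equation*}
where the first equality is Lemma \ref{lm:Bochner}, the second is the disintegration formula together with $\bar{\mu}_x = \delta_{q^{Q^*}(x)}$, and the third is Fubini's theorem (exchanging $\int_X$ and $\int_0^T$). As $\zeta$ is arbitrary, $Q^* = Q^{Q^*}$ in $\mathbb{L}^2_{e^{-rt}}([0,T])$; and since $0 \leq q^{Q^*}(x)(t) \leq \tfrac12$ a.e.\ by Lemma \ref{lm:existence_uniqueness}, averaging over $m$ preserves these bounds, so $Q^* \in \mathcal{G}$ and $Q^*$ solves \eqref{pb:fixed_point}.

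The expansion of $g_{\bar{\lambda}}$ and the bound check $Q^* \in \mathcal{G}$ are routine; the step demanding genuine care is the final identification, where $Q^*$ is an abstract Bochner integral while $Q^{Q^*}$ is defined pointwise in $t$. The Bochner-versus-pointwise matching is handled by testing against $\zeta$ and applying Fubini, but this requires the joint measurability of $(x,t) \mapsto q^{Q^*}_t(x)$, which itself rests on having a measurable selection of $\br_{\bar{\lambda}}$ — exactly the kind of selection issue addressed by the measurable-selection machinery (Theorem \ref{thm:measurable_selection}) used elsewhere in the paper. I expect this measurability, together with checking that Assumption \ref{ass1} holds so that Corollary \ref{cor:fixed_point} is applicable, to be the main obstacle.
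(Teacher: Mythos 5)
Your main line of argument coincides with the paper's: compute $\bar{\lambda} = (1,\epsilon Q^{*})$, identify $g_{\bar{\lambda}}(x,\cdot)$ with $J^{Q^{*}}$, use the uniqueness in Lemma \ref{lm:existence_uniqueness} to get $\br_{\bar{\lambda}}(x) = \{q^{Q^{*}}(x)\}$, and conclude via Corollary \ref{cor:fixed_point} and disintegration. However, there is a genuine gap where you write ``Assuming Assumption \ref{ass1} has been verified for this model'': that verification is not a side condition you may defer --- it is roughly half of the paper's proof, and it is the only non-routine part. The delicate point is the lower semi-continuity of the set-valued map $G_{\lambda}$, which does \emph{not} follow from continuity of $g$ alone, because the admissible set $Z_x = \{q \in \mathcal{G} \mid \int_0^T q_t\, dt \leq x\}$ varies with $x$; without it, Theorem \ref{thm:first} gives only the equivalence (1)$\Leftrightarrow$(2), and the support condition (3) in Corollary \ref{cor:fixed_point} --- the assertion your whole argument rests on --- is unavailable. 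The paper closes this by an explicit truncation construction: for $x_1 < x_2$ one has $Z_{x_1} \subseteq Z_{x_2}$ trivially, and conversely any $q \in Z_{x_2}$ is modified into $q' \in Z_{x_1}$ by setting $q'_t = q_t$ as long as $\int_0^t q_{\tau}\, d\tau \leq x_1$ and $q'_t = 0$ afterwards, which gives $\|q'-q\|_{\mathbb{L}^1([0,T])} \leq x_2 - x_1$ and, by H\"older's inequality, $\|q'-q\|^2_{\mathbb{L}^2_{e^{-rt}}([0,T])} \leq x_2(x_2 - x_1)$. Hence $Z_{x_2} \subseteq Z_{x_1} + B_Y\bigl(0, \sqrt{x_2(x_2-x_1)}\bigr)$, the map $\mathcal{Z}\colon x \rightsquigarrow \{g(x,q) \mid q \in Z_x\}$ is (locally) Lipschitz in the relevant sense, and the lower semi-continuity of $G_{\lambda}$ follows. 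You identify this as ``the main obstacle'' but do not resolve it, so as written the appeal to Corollary \ref{cor:fixed_point} is unjustified.

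By contrast, the measurability worry you raise at the end is not actually where the difficulty lies, and your treatment of the final identification is, if anything, more careful than the paper's. Once $\bar{\mu}_x = \delta_{q^{Q^{*}}(x)}$ $m$-a.e., the map $x \mapsto q^{Q^{*}}(x)$ inherits the measurability already packaged in the disintegration of the measure $\bar{\mu}$ itself, so no fresh appeal to Theorem \ref{thm:measurable_selection} is needed; the paper simply combines the support condition with Theorem \ref{thm:disintegration} to write $\int_Z q\, d\bar{\mu} = \int_X q^{Q^{*}}(x)\, dm(x)$, and your testing-against-$\zeta$ plus Fubini argument makes the same step explicit. Your observation that $Q^{*} \in \mathcal{G}$ because the pointwise bounds $0 \leq q^{Q^{*}}(x)(t) \leq \tfrac12$ survive averaging is correct and a harmless addition (the paper leaves it implicit). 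So: fix the verification of Assumption \ref{ass1} via the truncation estimate above, and your proof matches the paper's.
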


\begin{proof}
Let us first check that Assumption \ref{ass1} holds true for problem \eqref{pb:Potential}.
It is easy to see that Assumption \ref{ass1}(1) and the first and the third points in Assumption \ref{ass1}(2) are true by the continuity of $g$ and Lemma \ref{lm:existence_uniqueness}. Let us prove that $G_{\lambda}$ is lower semi-continuous for any $\lambda\in \mathcal{H}_f$. This is a consequence of the claim that the set-valued function $\mathcal{Z}\colon X \rightsquigarrow \mathcal{H}$,
$x\mapsto \{g(x,y) \mid y\in Z_x\}$ is locally Lipschitz, i.e. Lipschitz in any compact set of $X$.
To see the local Lipschitz continuity, we fix any $x_1<x_2$ in $X$. If $q\in Z_{x_1}$, then we have immediately that $q\in Z_{x_2}$. This implies that $Z_{x_1}\subseteq Z_{x_2}$. On the other hand, let $q\in Z_{x_2}$. We construct $q'\in Z_{x_1}$ by the following method:
\begin{equation*}
q'_t = \begin{cases}
q_{t},\quad &\text{if } \int_{0}^t q_{\tau} d\tau \leq x_1;\\
0 , & \text{otherwise}.
\end{cases}
\end{equation*}
As a consequence, we have that  $\|q'-q\|_{\mathbb{L}^{1}([0,T])} \leq x_2-x_1$. Therefore, by H\"older's inequality,
\begin{equation*}
\|q'-q\|_{\mathbb{L}^2_{e^{-rt}}([0,T]) }^2 \leq \|e^{-rt}(q'-q)\|_{\mathbb{L}^{\infty}([0,T])} \|q'-q\|_{\mathbb{L}^{1}([0,T])}  \leq x_2(x_2-x_1).
\end{equation*}
This implies that $Z_{x_2} \subseteq Z_{x_1} + \mathcal{B}_{Y}(0, \sqrt{x_2(x_2-x_1)})$. Therefore, Assumption \ref{ass1} follows.

Let $\bar{\mu}$ be a solution of problem \eqref{pb:Potential}, $\bar{\lambda} = \nabla f(\int_{Z}g d\bar{\mu})$ and $Q^{*}=\int_{Z}q d\bar{\mu}(x,q)$. By the definitions of $f$ and $g$, we obtain that $\bar{\lambda} = (1,\epsilon Q^{*})$, moreover,
\begin{equation*}
g_{\bar{\lambda}}(x,q) = \int_{0}^T e^{-rt} q_t(q_t-1 + \epsilon Q^{*}_t) dt.
\end{equation*}
By Lemma \ref{lm:existence_uniqueness}, $\br_{\bar{\lambda}}(x_0)  = \{q^{Q^{*}}(x_0)\}$ for any $x_0\in X$. By Corollary \ref{cor:fixed_point}, we have that $(\bar{\lambda},\bar{\mu})$ satisfies the following equilibrium equation:
\begin{equation*}
\begin{cases}
\ \bar{\lambda} = \left(1\, ,\, \epsilon \int_{Z} q d \bar{\mu}\right)\\
\ \bar{\mu}_{x} = \delta_{q^{Q^{*}}(x)}, \quad m\text{-a.e.}
\end{cases}
\end{equation*}
Combining with Theorem \ref{thm:disintegration}, we obtain that $\int_{Z} q d \bar{\mu} = \int_{X}q^{Q^{*}}(x)dm(x) $. Recall that $Q^{*} = \int_{Z}q d\bar{\mu}$, then \eqref{pb:fixed_point} follows.
\end{proof}

\subsubsection*{Numerical simulations}

Let the initial measure $m$ be an exponential distribution with parameter $a\geq 0$, i.e., $dm(x) = a e^{-ax} dx$ for all $x\geq 0$.
Let us independently sample the distribution $m$ for $N$ times, denoting the samples by $x_1, x_2, ..., x_N$, and $m_N = \frac{1}{N}\sum_{i=1}^N \delta_{x_i}$.
The time space $[0,T]$ is discretized with a step size $\Delta t = T/M$ for some $M\geq 1$.
Then, a totally discretized problem associated with \eqref{pb:Potential} writes:
\begin{equation}\label{pb:Potential_dis}
\begin{cases}
\ \inf_{q\in \R^{N\otimes M} }& J_N(q)\coloneqq
\frac{\Delta t}{N}  \sum_{i=1}^N \sum_{t=0}^{M-1} e^{-rt\Delta t}(q_{i,t}^2-q_{i,t}) + \frac{\epsilon \Delta t}{2} \sum_{t=0}^{M-1} e^{-rt} \left( \frac{1}{N}\sum_{i=1}^{N}{q_{i,t}} \right)^2, \\
\ \text{such that } & q_i\in  S^{M}(x_i) \coloneqq \{q\in [0,1/2]^M \mid \Delta t \sum_{t=0}^{M-1} q_t \leq x_i  \}, \qquad i =1,2,\ldots, N.
\end{cases}
\end{equation}

We apply Algorithm \ref{alg:SFW} to solve \eqref{pb:Potential_dis}. At each iteration, the evaluation of a best-response, for each producer $i$ amounts to solve a problem of the following form:
\begin{equation}\label{pb:sub_i}
\begin{cases}
\ \inf_{q_i\in \R^M} & \Delta t  \sum_{t=0}^{M-1} e^{-rt\Delta t}q_{i,t}(q_{i,t}-1 + \epsilon Q_t ), \\
\ \text{such that } & q_i\in  S^{M}(x_i),
\end{cases}
\end{equation}
for a given $Q\in [0,1/2]^M$. This problem is a convex quadratic programming problem in $\R^{M}$ that can be dealt with by some solvers, such as GUROBI \cite{gurobi2018gurobi}.

For the resolution of the problem, we chose the following parameters: $T=10$, $\epsilon = r=a=1$, $N=100$, $M = 100$, $K= 100$, $n_k = 10$, for all $k$.
Figure \ref{fig:SS} shows the extracting speeds and the stocks of three producers with initial stocks: 0.9, 1.2, and 3.1.
From Figure \ref{fig:SS}, we see that the producers with the higher initial stock have the same extracting speed as those with a lower initial stock, at the beginning. However, as the smaller agents exhaust their resource, the larger ones progressively raise their extraction speed. Once the extraction speed reaches its maximum value, it rapidly decreases to zero.
These observations are consistent with the findings of \cite[Sec.\@ 3.3]{graewe2022maximum}.

\begin{figure}[ht]
\centering
\includegraphics[width=1\linewidth]{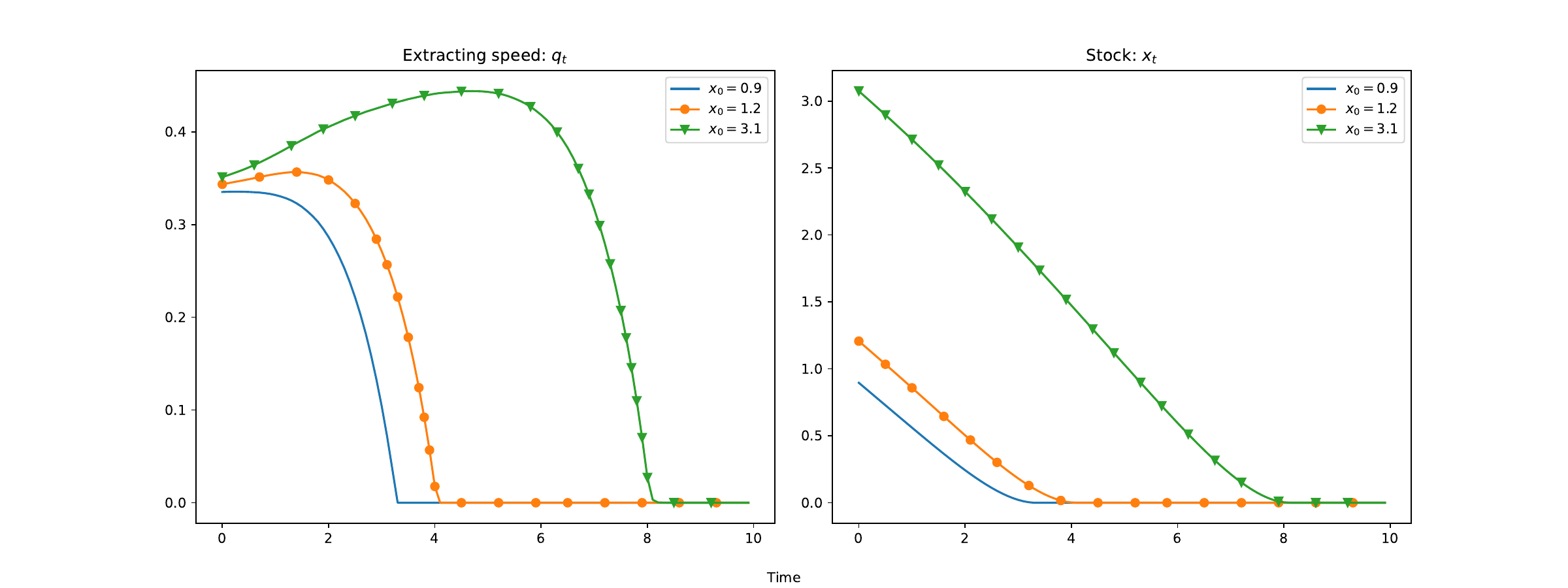}
\caption{Extracting speeds and stocks of three producers with initial stocks: 0.9, 1.2, and 3.1.}
\label{fig:SS}
\end{figure}

To study the error caused by sampling, we independently sample the exponential distribution $m$ for $100*N$ times, and group them into batches of $N$. The empirical distribution corresponding to each batch is set as the initial distribution.
Then we apply Algorithm \ref{alg:SFW} to compute $Q^{*}$ corresponding to each initial distribution. In Figure \ref{fig:MS}, we show the mean and standard deviation of the results of the 100 simulations.

\begin{figure}[ht]
\centering
\includegraphics[width=0.5\linewidth]{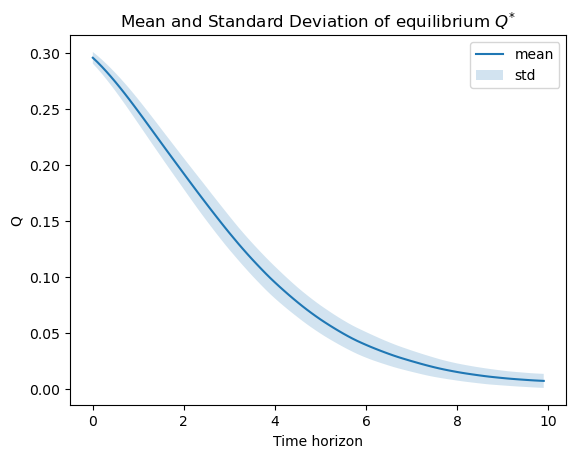}
\caption{Mean and standard deviation of the equilibria of $100$ batches}
\label{fig:MS}
\end{figure}

\subsection{Numerical results for a congestion game}

\subsubsection*{Model}

Consider a second numerical example within the context of the minimal-time deterministic MFG. We set the following parameters: the state space is fixed as $[0, 1]$, a maximum duration is denoted by $T>0$, and an upper bound for the speed is given by $\bar{V}>0$. In this particular example, the dynamics governing each player are characterized by the set $Z$, defined as:
\begin{equation*}
Z = \left\{ (x,\gamma)\in [0,1]\times \text{AC}([0,T]) \, \mid \, \dot{\gamma}_t = v_t, \,\gamma_0 = x, \,0\leq v_t\leq \bar{V} \right\}.
\end{equation*}
The objective for the players in this example is to reach the target point $1$ as soon as possible, while simultaneously ensuring that the density at each point does not become excessively high. To quantify this, we introduce the congestion function $\tilde{\mathcal{F}}\colon \mathcal{P}([0,1])\to \R$, which is defined as follows:
\begin{equation*}
\tilde{\mathcal{F}}(m) = \begin{cases}
\int_{0}^1 m(x)^2 dx,\quad &\text{if } m\ll dx, \\
+\infty, \text{otherwise}.
\end{cases}
\end{equation*}
Given an initial distribution $m_0\in \mathcal{P}([0,1])$, the resulting deterministic MFG problem can be expressed as follows:
\begin{equation}\label{eq:1stmfg}
\inf_{\mu\in \mathcal{P}_{m_0}(Z)} J(\mu) \coloneqq
\int_{Z}\int_{0}^T \mathbb{I}_{[0,1]}(\gamma_t) dt d\mu(x,\gamma) + \alpha \int_{0}^T \tilde{\mathcal{F}} (e_t\#\pi_2\#\mu) dt,
\end{equation}
where $\alpha>0$ is a penalty parameter.

\subsubsection*{Regularization}

Note that the congestion function $\tilde{\mathcal{F}}$ does not fit to the framework studied in this article. To address this, we begin by approximating $\tilde{\mathcal{F}}$ with a function that aligns with our framework. We achieve this by partitioning the interval $[0,1]$ into $J\in \mathbb{N}_{+}$ small, uniform subintervals: $I_1,\ldots, I_J$, where $I_j = [(j-1)\Delta x, j\Delta x]$, and $\Delta x = 1/J$. Subsequently, we approximate $\tilde{\mathcal{F}}(m)$ as follows:
\begin{equation*}
\tilde{\mathcal{F}}(m) \approx  \Delta x\sum_{j=1}^J \left(\frac{1}{\Delta x}\int_{0}^1 \mathbb{I}_{I_j}(x)dm(x)\right)^2.
\end{equation*}
To facilitate the execution of the numerical experiments, we replace the indicator function $\mathbb{I}_{I_j}$ by some smooth functions. Let $k\geq J$ be a positive integer. We introduce two smooth functions, denoted as $\varphi_k \in \mathcal{C}^{\infty}(\mathbb{R})$ and $\phi_{k,\Delta x} \in \mathcal{C}^{\infty}(\mathbb{R})$. These functions are parametrized by the variables $k$ and $\Delta x$, and are defined as follows:
\begin{equation*}
\begin{split}
\varphi_{k} (x) &=
\begin{cases}
\ 0 , & \text{if }  x<=0,\\
\ \frac{1}{1+ e^{1/kx-1/(1-kx)}}, \qquad &\text{if } 0<x < \frac{1}{k},\\
\ 1 & \text{otherwise},
\end{cases} \\[0.8em]
\phi_{k,\Delta x} (x) & =
\begin{cases}
\ 0 , & \text{if }x\leq -\frac{1}{k},\\
\ \varphi_{k} (x+1/k) , \qquad &\text{if } -\frac{1}{k}<x<0, \\
\ 1 &\text{if } 0\leq x \leq \Delta x-\frac{1}{k}, \\
\ 1- \varphi_{k} (x-\Delta x+\frac{1}{k}), &\text{otherwise}.
\end{cases}
\end{split}
\end{equation*}
Then, we approximate $\mathbb{I}_{I_j}$ by $h_j\colon \mathbb{R}_{+}\to [0,1], x\mapsto \phi_{k,\Delta x}(x-(j-1)\Delta x)$ for $j=1,\ldots, J$, and $\mathbb{I}_{[0,1]}$ by $h_0\colon \mathbb{R}_{+}\to [0,1]$,
\begin{equation*}
h_0 (x) = \begin{cases}
1, \qquad &\text{if } 0\leq x < 1-\frac{1}{k},\\
\ 1-\varphi_k\left(x-1+\frac{1}{k}\right), \quad   & \text{otherwise}.
\end{cases}
\end{equation*}
An important property of $\{h_j\}_{j=1,\ldots,J}$ is that $\sum_{j=1}^J h_j(x) =h_0(x)$, which is $1$ for any $x\in [0,1-1/k]$, see Figure \ref{fig:h}.
\begin{figure}[ht]
\centering
\includegraphics[width=1\linewidth]{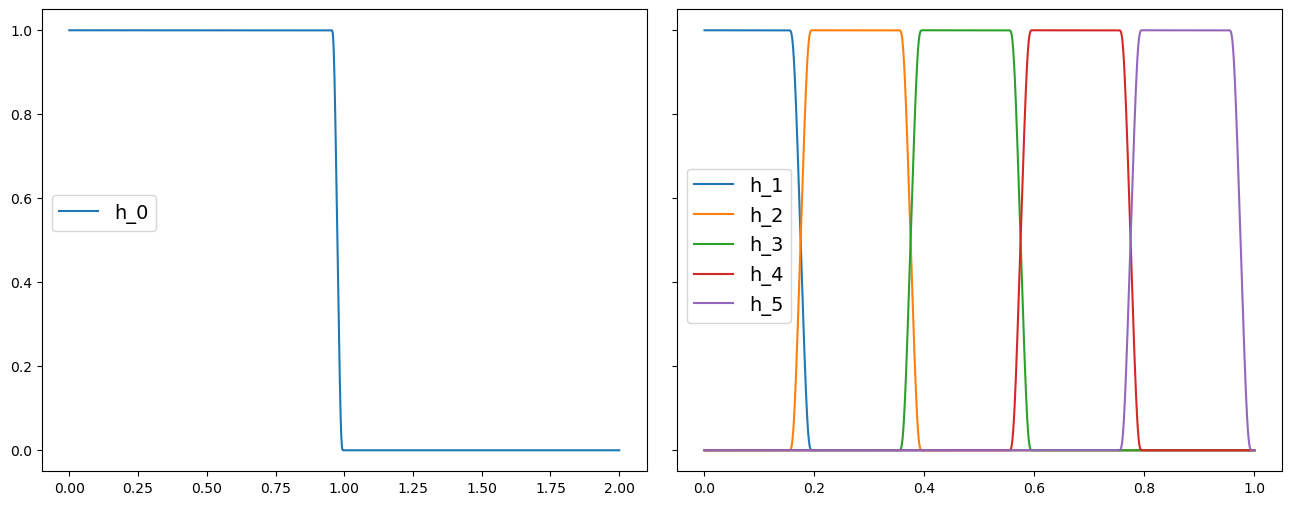}
\caption{Example of  $h_0$ and $\{h_j\}_{j=1,\ldots,J}$ with $J=5$ and $k=20$.}
\label{fig:h}
\end{figure}
The resulting approximated MFO problem associated with \eqref{eq:1stmfg} is,
\begin{equation}\label{pb:1stmfg_approx}
\inf_{\mu\in \mathcal{P}_{m_0}(Z)}
\int_{Z}\int_{0}^T h_0(\gamma_t) dt d\mu(x,\gamma) + \frac{\alpha}{\Delta x} \sum_{j=1}^J \int_{0}^T \left(\int_{Z} h_j(\gamma_t) d\mu(x,\gamma)\right)^2 dt.
\end{equation}

\subsubsection*{Discretization and numerical results}

To proceed with our numerical experiments, we discretize the time horizon $[0, T]$ into $M$ steps, each of duration $\Delta t = T/M$. Additionally, we discretize the initial distribution $m_0$ by $m_N = \frac{1}{N}\sum_{i=1}^N\delta_{x_i}$. We formulate the fully discretized problem associated with \eqref{pb:1stmfg_approx} as follows:
\begin{equation}\label{pb:1stmfg_dis}
\begin{cases}
\inf_{(\gamma^i)_{i=1,\ldots, N}} \quad &
\frac{\Delta t}{N}\sum_{i=1}^N \sum_{t=0}^{M-1} h_0(\gamma^i_t) + \frac{\alpha\Delta t}{\Delta x} \sum_{j=1}^J \sum_{t=0}^{M-1} \left(\frac{1}{N} \sum_{i=1}^N h_j(\gamma_t^i) )\right)^2,\\
\text{such that } & 0\leq \gamma_{t+1}^i -\gamma_t^i \leq \bar{V}\Delta t, \, \gamma_0^i =x_i, \, \, \text{for } t= 0,\ldots, M, \text{ and }i=1,\ldots,N.
\end{cases}
\end{equation}
Therefore, given some $(\bar{\gamma}^i)_{i=1,\ldots,N}$ satisfying the constraint in \eqref{pb:1stmfg_dis}, the sub-problem for player $i$ is
\begin{equation}\label{pb:1stmfg_sub}
\begin{cases}
\inf_{\gamma^i} \quad &
\Delta t \sum_{t=0}^{M-1} h_0(\gamma^i_t) + \frac{2\alpha\Delta t}{\Delta x} \sum_{j=1}^J \sum_{t=0}^{M-1} \bar{y}_{j,t} h_j(\gamma_t^i) ,\\
\text{such that } & 0\leq \gamma_{t+1}^i -\gamma_t^i \leq \bar{V}\Delta t, \, \gamma_0^i =x_i, \, \, \text{for } t= 0,\ldots, M,
\end{cases}
\end{equation}
where $\bar{y}_{j,t} = \frac{1}{N}\sum_{i=1}^N h_j(\bar{\gamma}^i_t)$. The sub-problem \eqref{pb:1stmfg_sub} is a finite-dimensional non-convex optimization problem, which is addressed by the open-source solver ``scipy.optimize.minimize" \cite{virtanen2020scipy}.

Let us specify the parameters used in the numerical simulation of problem \eqref{pb:1stmfg_approx} as follows:
\begin{equation*}
J = 5,\, k=20,\, \bar{V}=3,\,   T=1,\,  \alpha =1, \, M = 20, \, N = 200 ,\, m_0 = \text{Uni}_{[0,0.2]},
\end{equation*}
where ``Uni" represents the uniform distribution, and the points $x_i$ are drawn from samples of $m_0$.

We first present in Figure \ref{fig_conv} convergence results of Algorithm \ref{alg:SFW} for the discretized problem \eqref{pb:1stmfg_dis} in $100$ iterations, utilizing parameter settings of $n_k=1$ and $5$. We see that in both choices, the algorithm converges to a local minimum very fast (fewer than 20 iterations).
In Figure \ref{fig_traj}, we compare the optimal trajectories of $\gamma^i$ for two cases: $\alpha=0$ and $\alpha=1$. It is evident that, in the case of $\alpha=0$, the optimal strategy for each player is to move at the maximum speed, $\bar{V}=3$, as there is no penalty for density. This is depicted in the left part of Figure \ref{fig_traj}. However, when $\alpha=1$, players starting from greater initial positions choose to run at the maximum speed, whereas those with lower initial positions prefer to wait briefly to avoid congestion in density with those starting farther ahead. In Figure \ref{fig_m1}, we draw the agents' state distributions at each time for both $\alpha=0$ and $\alpha=1$, along with a regularized version obtained through interpolation in Figure \ref{fig_m2}. For a more detailed view of the density evolution before reaching the target, we further depict the restricted distributions within the spatial interval $[0, 0.8]$ in Figure \ref{fig_m3}, using a distinct color scale.

\begin{rem}
Let us underline that for this example, the optimization problems involved in the evaluation of the best-response mapping are non-convex. As mentionned above, we address them with 
the open-source solver scipy.optimize.minimize whose default method for tackling constrained non-linear optimization problems is the SLSQP (Sequential Least SQuares Programming) algorithm, a quasi-Newton-type algorithm. Consequently, the quality of the initial guess plays a crucial role in the resolution of sub-problems. In the context of Algorithm \ref{alg:SFW}, our experience shows that at iteration $k$, it is more efficient to initialise the evaluation of $\bar{y}_i^k \in \br_{\lambda^k}(x_i)$ with $y_i^{k}$ (rather than $\bar{y}_i^{k-1}$). We conjecture that the chance for the solver to generate a local solution is higher when initializing with $\bar{y}_i^{k-1}$.
\end{rem}

\begin{figure}[htbp]
\centering
\includegraphics[width=0.5\linewidth]{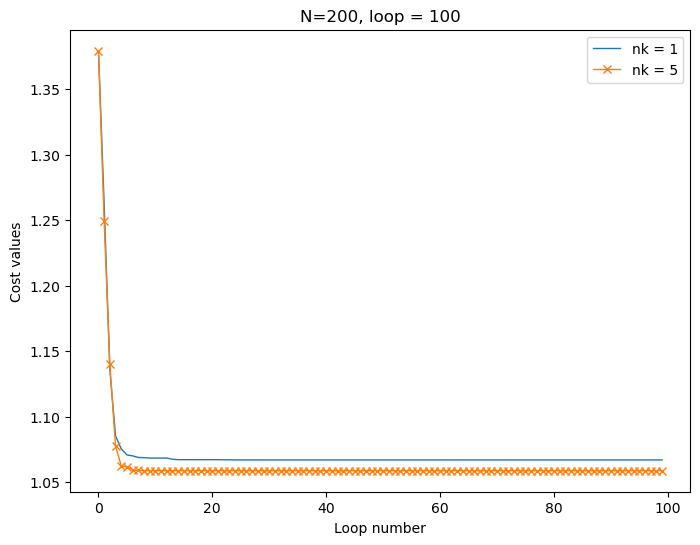}
\caption{Convergence results of Algorithm \ref{alg:SFW}.}
\label{fig_conv}
\end{figure}

\begin{figure}[htbp]
\centering
\includegraphics[width=0.8\linewidth]{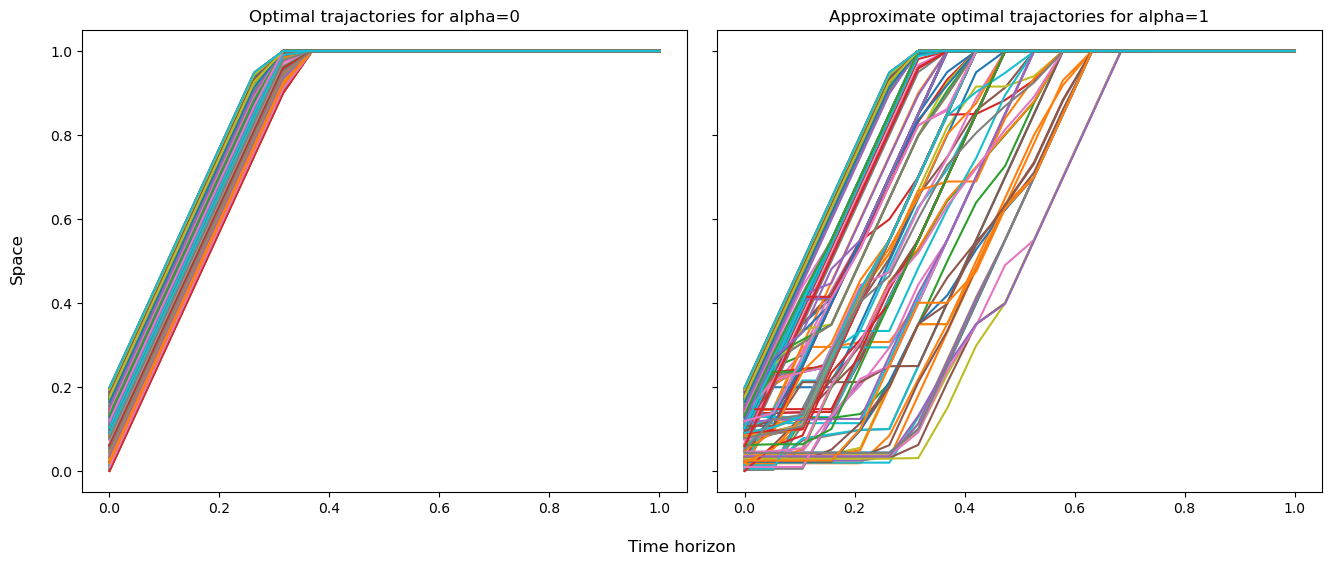}
\caption{Optimal trajectory of each player: the case $\alpha=0$ (left), the case $\alpha =1$ (right)}
\label{fig_traj}
\end{figure}

\begin{figure}[htbp]
\centering
\begin{subfigure}{0.8\textwidth}
\centering
\includegraphics[width=\linewidth]{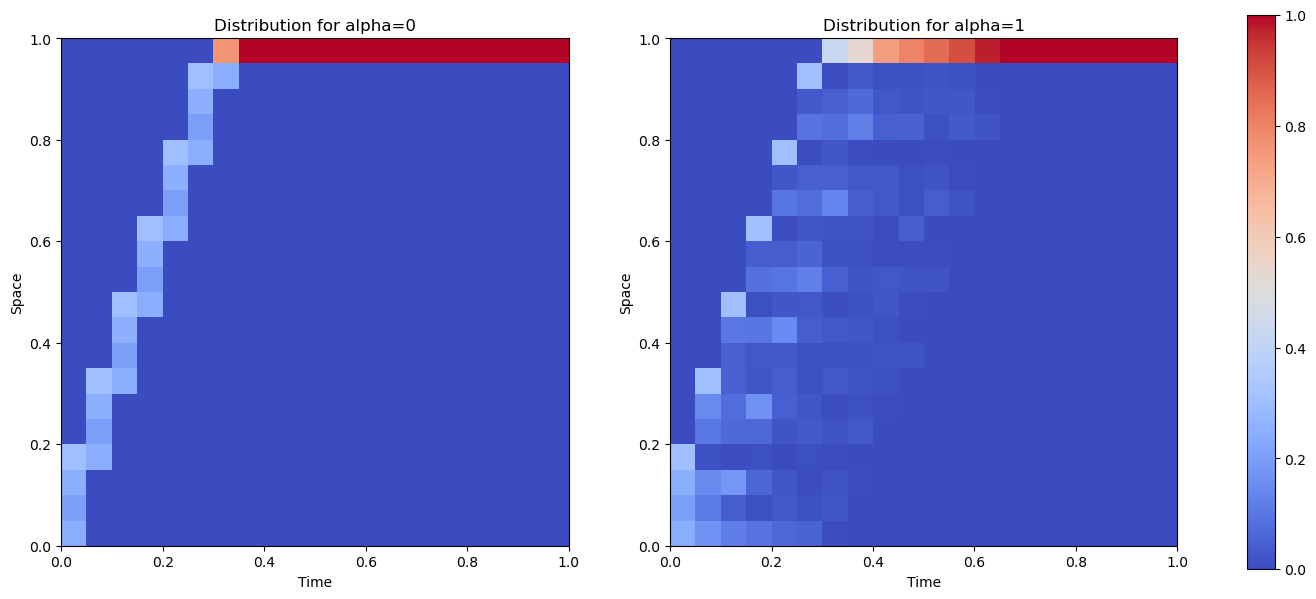}
\caption{Comparison of distributions of positions at each time: the case $\alpha=0$ (left), the case $\alpha=1$ (right).}
\label{fig_m1}
\end{subfigure}
\hfill

\begin{subfigure}{0.8\textwidth}
\centering
\includegraphics[width=\linewidth]{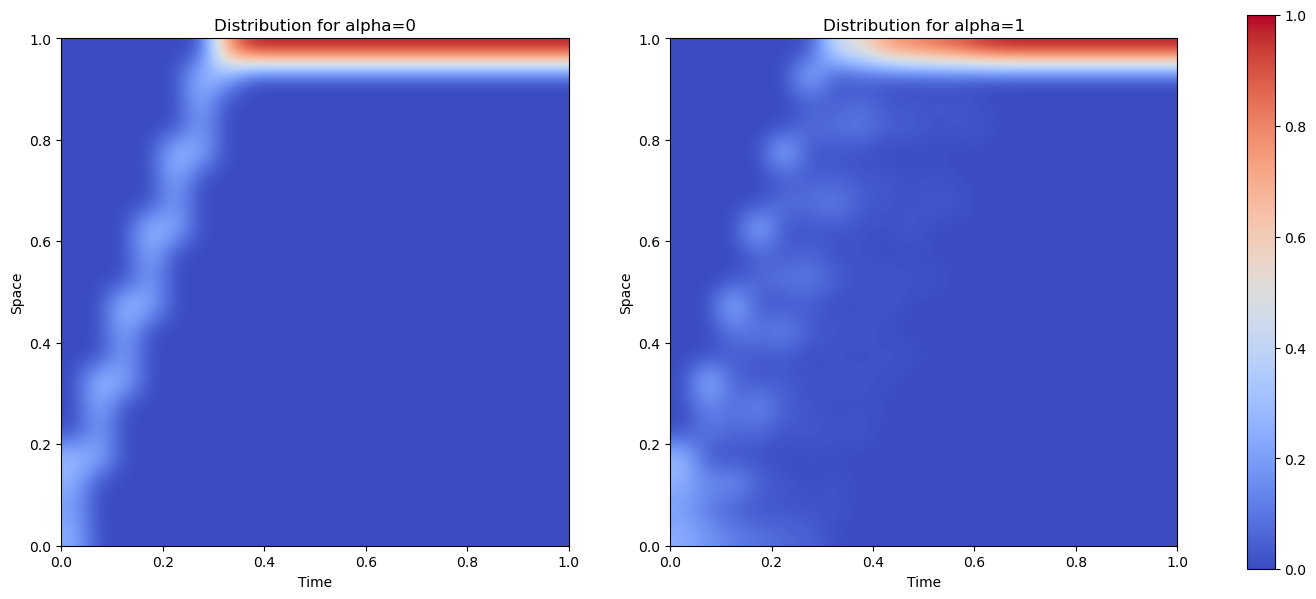}
\caption{Comparison of regularized distributions of positions at each time: the case $\alpha=0$ (left), the case $\alpha=1$ (right).}
\label{fig_m2}
\end{subfigure}
\hfill

\begin{subfigure}{0.8\textwidth}
\centering
\includegraphics[width=\linewidth]{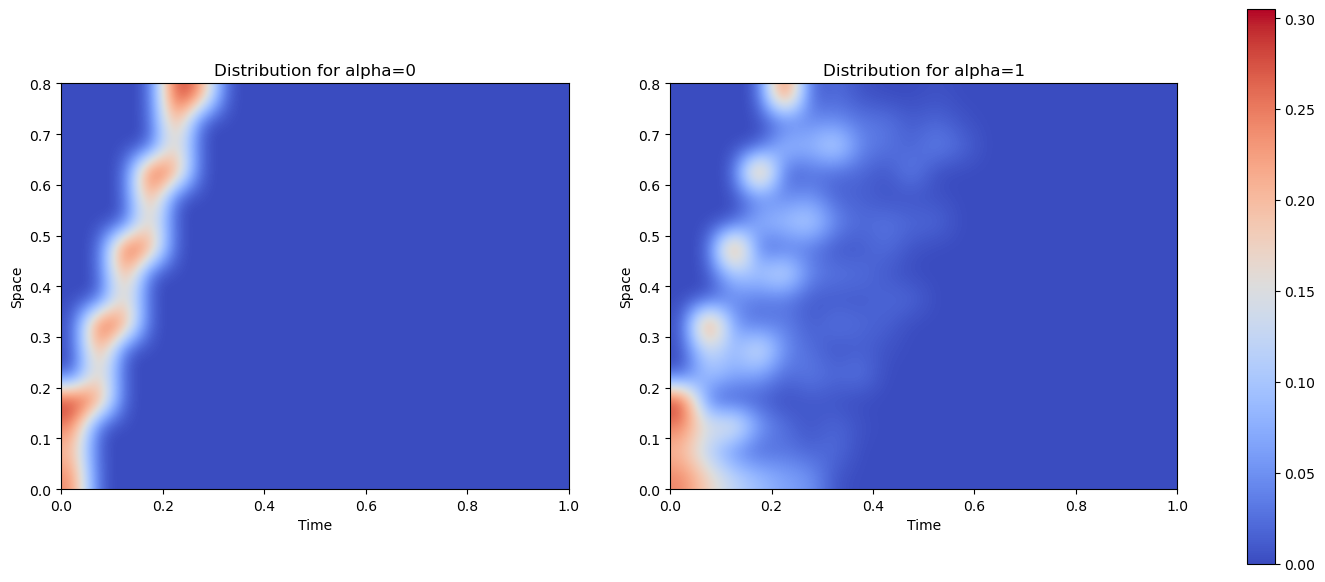}
\caption{Comparison of regularized distributions of positions in horizon $[0,0.8]$ at each time: the case $\alpha=0$ (left), the case $\alpha=1$ (right).}
\label{fig_m3}
\end{subfigure}
\medskip
\caption{Distributions}
\label{fig_m}
\end{figure}

\section{Conclusion}

We have provided a general framework for analyzing Mean Field Optimization problems. We have proposed a general method, based on an extension of the Frank-Wolfe algorithm for solving MFO problems, with a convergence guarantee, assuming that some best-response function can be efficiently computed (with a solver or with specific methods). Numerous extensions of the current setting could be considered. For example, one could formulate a stochastic setting with a random variable impacting all agents. In this setting the evaluation of $\lambda$ (in the SFW algorithm) may require to use Monte-Carlo approximations, adding a new source of error in the general algorithm. One may also realize a general convergence analysis that would take into account the need to discretize the sets $Z_x$ (in particular in the case of MFGs, where $Z_x$ is an infinite dimension set). Finally, at a purely numerical level, we could investigate variants of the proposed method in which the distribution $m$ is discretized progressively. This would reduce the number of subproblems to solve in the early iterations of the SFW algorithm.
We also mention that the SFW is robust in the following sense: at the end of iteration $k$, if $y^{k+1}$ is replaced by any other point yielding a reduction of the cost function, then the general convergence properties of the SFW algorithm are preserved. This fact could motivate the design of heuristic improvements on a case-by-case basis.

\appendix

\section{Proof of Lemma \ref{lm:inf_int}}\label{Appx:A}

Before proving Lemma \ref{lm:inf_int}, let us recall the definitions of the restriction of a measure and the completion of a probability space, taken from \cite[Thm.\@ 1.36]{rudin}.

\begin{defn}[Restriction]\label{def_res}
Let $X_1$ be a Polish space, let $\mathcal{X}$ and $\mathcal{X}'$ be two $\sigma$-algebras on $X_1$ such that $\mathcal{X}'\subseteq \mathcal{X}$, and let $\nu$ be a measure on $\mathcal{X}$.
The restriction measure of $\nu$ on $\mathcal{X}'$ is defined as follows:
\begin{equation*}
\nu|_{\mathcal{X}'}(A) \coloneqq  \nu(A),  \qquad \text{ for any } A \in \mathcal{X}'.
\end{equation*}
\end{defn}

\begin{defn}[Completion]\label{def_com}
Let $(X_1,\mathcal{B}^{X_1},\nu)$ be a probability space. Let $\mathcal{B}_{\nu}$ be the collection of all $E\subseteq X_1$ such that
there exists $A$ and $B$ in $\mathcal{B}^{X_1}$, $A\subseteq E \subseteq B$, and $\nu(B-A) = 0$. For such an $E$, we define a function $\hat{\nu}(E)$ as
\begin{equation*}
\hat{\nu} (E) = \nu(A).
\end{equation*}
Then $(X_1, \mathcal{B}_{\nu} , \hat{\nu})$ is a complete measure space. We say that  $(X_1, \mathcal{B}_{\nu} , \hat{\nu})$ is the completion of $(X_1, \mathcal{B}^{X_1}, \nu)$.
\end{defn}

\begin{proof}[Sketch of the proof of Lemma  \ref{lm:inf_int}]
The proof of the direction that the left-hand-side of \eqref{eq:first_order_2} is greater than the right-hand-side is the same as the proof for the case that $m\in \mathcal{P}_{\delta}(X)$.

Let us prove the converse inequality. Let $(X, \mathcal{B}_m, \hat{m})$ be the completion of the probability space $(X, \mathcal{B}^X,m)$. Fix any $\lambda\in \mathcal{H}_f$. By Assumption \ref{ass1}, the set-valued function $\br_{\lambda}\colon X \rightsquigarrow Y $ has non-empty closed images. By Lemma \ref{lm:close}, Graph$(\br_{\lambda})$ is closed in $X\times Y$, thus is a $\mathcal{B}_m \otimes \mathcal{B}^Y$-measurable set. By Lemma \ref{lm:set_measurable_2} and Theorem \ref{thm:measurable_selection}, the set-valued function $\textnormal{\br}_{\lambda}\colon X \rightsquigarrow Y$ is $(\mathcal{B}_m, \mathcal{B}^Y)$-measurable, and there exists a $(\mathcal{B}_m, \mathcal{B}^Y)$-measurable function $\textnormal{\sbr}_{\lambda} \colon X \to Y$ such that for any $x\in X$,
\begin{equation*}
\textnormal{\sbr}_{\lambda}(x) \in \textnormal{\br}_{\lambda} (x).
\end{equation*}
We define $\mathcal{A}\colon X \to Z $, $x\mapsto (x, \sbr_{\lambda}(x))$.
Since $\sbr_{\lambda}$ is $(\mathcal{B}_m, \mathcal{B}^Y)$-measurable, we have that $\mathcal{A}$ is $(\mathcal{B}_m, \mathcal{B}_m\otimes \mathcal{B}^{Y})$-measurable, see \cite[Lem.\@ 1.8]{kallenberg1997foundations}.
Let $\mathcal{B}^Z$ be the Borel $\sigma$-algebra on $Z$.
It is obvious that $\mathcal{B}^Z \subseteq \mathcal{B}^X\otimes \mathcal{B}^{Y} \subseteq \mathcal{B}_m\otimes \mathcal{B}^{Y}$. Let us take
\begin{equation*}
\tilde{\mu} = \mathcal{A} \# \hat{m} |_{\mathcal{B}^Z}.
\end{equation*}
Then $\tilde{\mu}$ is a positive Borel measure on $Z$. Moreover, we deduce from Definitions \ref{def_res}-\ref{def_com} that
\begin{equation*}
\tilde{\mu}(Z) = \mathcal{A}\#\hat{m}(Z) = \hat{m}(X) =m (X) =1.
\end{equation*}
Therefore, $\tilde{\mu}\in \mathcal{P}(Z)$.
Assume that the following two equalities hold true:
\begin{align}
\pi_1\# \tilde{\mu} &= m , \label{eq:proof1}\\
\int_{Z} g_{\lambda} d\tilde{\mu} &= \int_X g_{\lambda}\circ \mathcal{A}\,  dm. \label{eq:proof2}
\end{align}
By the definitions of $u_{\lambda}$ and $\mathcal{A}$,
\begin{equation}\label{eq:proof3}
g_{\lambda}\circ \mathcal{A}(x) = g_{\lambda} (x, \sbr_{\lambda}(x)) = \inf_{y \in Z_x} g_{\lambda} (x, y) = u_{\lambda}(x), \quad \forall x\in X.
\end{equation}
Combining \eqref{eq:proof1}-\eqref{eq:proof3}, we obtain that
\begin{equation*}
\inf_{\mu\in \mathcal{P}_{m}(Z)}  \int_{Z} g_{\lambda} d  \mu \leq \int_{Z} g_{\lambda} d \tilde{\mu} = \int_X g_{\lambda}\circ \mathcal{A}\,  dm = \int_{X} u_{\lambda} dm .
\end{equation*}
The conclusion follows.
\end{proof}

For completing the proof of Lemma \ref{lm:inf_int}, it remains to prove equalities \eqref{eq:proof1}-\eqref{eq:proof2}. They are deduced from Lemmas \ref{lm:measure}-\ref{lm:lsc_Borel}:
\begin{itemize}
\item To prove \eqref{eq:proof1}, we take $\tilde{X}= X$ and $h=\pi_1$ in Lemma \ref{lm:measure};
\item To prove \eqref{eq:proof2}, we take $\tilde{X} = [-M\|\lambda\|,+\infty)$ and $h = g_{\lambda}$ in Lemma \ref{lm:measure}, and Lemma \ref{lm:lsc_Borel} implies that $g_{\lambda}\circ \mathcal{A} = u_{\lambda}$ is Borel measurable.
\end{itemize}

Recall the definition of  $\mathcal{A}\colon X\to Z$ in the previous proof and recall that $\tilde{\mu} = \mathcal{A}\#\hat{m}|_{\mathcal{B}^Z}$.

\begin{lem}\label{lm:measure}
Let $\tilde{X}$ be a Polish space. Let $h\colon Z\to \tilde{X}$ be a Borel measurable function. Assume that $h \circ \mathcal{A} \colon X\to \tilde{X}$ is Borel measurable. Then
$h\# \tilde{\mu} = (h\circ \mathcal{A}) \# m$.
As a consequence, if $\tilde{X} = [c,+\infty)$ for some $c\in\R$, then
\begin{equation*}
\int_{Z} h \,  d \tilde{\mu} = \int_{X}h \circ \mathcal{A} \; d m.
\end{equation*}
\end{lem}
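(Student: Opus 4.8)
The plan is to first establish the pushforward identity $h \# \tilde{\mu} = (h \circ \mathcal{A}) \# m$ at the level of measures, and then deduce the integral formula by a change of variables applied to the identity map on $\tilde{X}$. The recurring delicate point is to track carefully in which $\sigma$-algebra each preimage lives, since the completion $\hat{m}$ coincides with $m$ only on the Borel $\sigma$-algebra $\mathcal{B}^X$, and not on the larger completed $\sigma$-algebra $\mathcal{B}_m$.

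First I would fix an arbitrary set $A$ in the Borel $\sigma$-algebra $\mathcal{B}^{\tilde{X}}$ of $\tilde{X}$ and unfold $h \# \tilde{\mu}(A) = \tilde{\mu}(h^{-1}(A))$. Since $h$ is Borel measurable, $h^{-1}(A) \in \mathcal{B}^Z$, so the definition of the restriction (Definition \ref{def_res}) gives $\tilde{\mu}(h^{-1}(A)) = \mathcal{A} \# \hat{m}(h^{-1}(A))$. Using the definition of the pushforward together with the set identity $\mathcal{A}^{-1}(h^{-1}(A)) = (h \circ \mathcal{A})^{-1}(A)$, this equals $\hat{m}\big( (h \circ \mathcal{A})^{-1}(A) \big)$; this set does belong to $\mathcal{B}_m$ because $\mathcal{A}$ is $(\mathcal{B}_m, \mathcal{B}_m \otimes \mathcal{B}^Y)$-measurable and $h^{-1}(A) \in \mathcal{B}^Z \subseteq \mathcal{B}_m \otimes \mathcal{B}^Y$, so the expression is well-defined.

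The hypothesis that $h \circ \mathcal{A}$ is Borel measurable now enters in the crucial way: it guarantees $(h \circ \mathcal{A})^{-1}(A) \in \mathcal{B}^X$, and since $\hat{m}$ agrees with $m$ on $\mathcal{B}^X$ (Definition \ref{def_com}), we get $\hat{m}\big( (h \circ \mathcal{A})^{-1}(A) \big) = m\big( (h \circ \mathcal{A})^{-1}(A) \big) = (h \circ \mathcal{A}) \# m(A)$. As $A$ is arbitrary, this yields $h \# \tilde{\mu} = (h \circ \mathcal{A}) \# m$. I expect this to be the main obstacle: without the Borel measurability of $h \circ \mathcal{A}$ the preimage would only be known to lie in $\mathcal{B}_m$, where $\hat{m}$ need not reduce to $m$, and the whole argument would collapse.

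For the consequence, assume $\tilde{X} = [c, +\infty)$ and let $\iota \colon \tilde{X} \to \R$ be the identity map, which is bounded below by $c$. The standard change-of-variables formula for pushforward measures gives $\int_{Z} h \, d\tilde{\mu} = \int_{\tilde{X}} \iota \, d(h \# \tilde{\mu})$ and $\int_{X} h \circ \mathcal{A} \, dm = \int_{\tilde{X}} \iota \, d((h \circ \mathcal{A}) \# m)$. Restricting to $[c, +\infty)$ ensures both integrands are bounded below, so these integrals are well-defined in $(-\infty, +\infty]$ (one may apply the formula to the nonnegative function $\iota - c$, the constant $c$ contributing the same amount to each side since both pushforward measures are probability measures of total mass $1$). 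Because the two pushforward measures coincide by the first part, the right-hand sides are equal, whence $\int_{Z} h \, d\tilde{\mu} = \int_{X} h \circ \mathcal{A} \, dm$, as claimed.
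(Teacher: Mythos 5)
Your proof is correct and follows essentially the same route as the paper's: the same chain of equalities $h\#\tilde{\mu}(A) = \tilde{\mu}(h^{-1}(A)) = \mathcal{A}\#\hat{m}(h^{-1}(A)) = \hat{m}\bigl((h\circ\mathcal{A})^{-1}(A)\bigr) = m\bigl((h\circ\mathcal{A})^{-1}(A)\bigr)$, with the Borel measurability of $h\circ\mathcal{A}$ used at exactly the same point to pass from $\hat{m}$ to $m$, and the same reduction to a nonnegative integrand via the constant $c$ before applying the change-of-variables formula. Your explicit check that $(h\circ\mathcal{A})^{-1}(A)$ lies in $\mathcal{B}_m$ (so that $\hat{m}$ of it is a priori well-defined) is a small welcome elaboration that the paper leaves implicit.
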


\begin{proof}
Let $B$ be any Borel set in $\tilde{X}$.
By the property of push-forward measure, $ h\#\tilde{\mu} (B) = \tilde{\mu} (h^{-1}(B))$. Since $h$ is Borel measurable, $h^{-1}(B) \in \mathcal{B}^{Z}$. Thus
$h\#\tilde{\mu} (B)  = {\mathcal{A}}\# \hat{m}(h^{-1}(B))$.
Next, by the property of the push-forward measure,
\begin{equation*}
{\mathcal{A}}\# \hat{m}(h^{-1}(B)) = \hat{m}(\mathcal{A}^{-1} h^{-1}(B)) = \hat{m}( (h \circ \mathcal{A})^{-1} (B)).
\end{equation*}
Since $h\circ \mathcal{A}$ is Borel measurable, we have that
$ (h \circ \mathcal{A})^{-1} (B) \in \mathcal{B}^X$. As a consequence,
\begin{equation*}
\hat{m}( (h\circ \mathcal{A})^{-1} (B)) = m( (h\circ \mathcal{A})^{-1} (B))  = (h\circ \mathcal{A})\# m (B).
\end{equation*}
Therefore, $h\#\tilde{\mu}(B) =  (h\circ \mathcal{A})\# m (B)$ for any Borel set $B\subseteq \tilde{X}$. This concludes the first part of the proof.
In the case where $X_3 = [c,+\infty)$ for some $c\in \R$, since
$ c = \int_{X_2} c\, d\tilde{\mu} = \int_{X_1} c \circ \mathcal{A} \, dm$, it suffices to prove the conclusion for $h-c$ in instead of $h$. Therefore, we can assume that $X_3 = \R_{+}$. By the change-of-variable formula for push-forward measures,
\begin{equation*}
\int_{Z} h d\tilde{\mu} =  \int_{\R_{+}} x\,  d (h\#\tilde{\mu} (x)).
\end{equation*}
Next, it follows from the equality $  h\# \tilde{\mu} = (h\circ \mathcal{A}) \# m$ that
\begin{equation*}
\int_{\R_{+}} x\,  d (h\#\tilde{\mu} (x)) = \int_{\R_{+}} x\,  d ((h\circ \mathcal{A})\# m  (x)).
\end{equation*}
Again, by the change-of-variable formula, we obtain that
\begin{equation*}
\int_{\R_{+}} x\,  d ((h\circ \mathcal{A})\# m  (x)) = \int_{ X } h\circ \mathcal{A} \; d m.
\end{equation*}
The conclusion follows.
\end{proof}

\begin{lem}\label{lm:lsc_Borel}
Under Assumption \ref{ass1}, the function $u_{\lambda}$ is upper semi-continuous for any $\lambda \in \mathcal{H}_f$, thus Borel measurable.
\end{lem}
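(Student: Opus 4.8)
The plan is to establish the upper semi-continuity of $u_\lambda$ through its sequential characterization and then to deduce Borel measurability from the topological description of the superlevel sets. Throughout, fix $\lambda \in \mathcal{H}_f$; from Assumption \ref{ass1}(2) we shall only use that the set-valued function $G_\lambda$ is lower semi-continuous and that $\br_\lambda$ has non-empty images. Recall that $u_\lambda(x) = \inf G_\lambda(x)$, where $G_\lambda(x) = \{ g_\lambda(x,y) \mid y \in Z_x\} \subseteq \R$, so that $u_\lambda$ is the pointwise infimum of the set-valued map $G_\lambda$. The heuristic behind the proof is that lower semi-continuity of $G_\lambda$ forces this infimum to be upper semi-continuous.

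First I would fix an arbitrary point $\bar{x} \in X$ and an arbitrary sequence $(x_n)_{n\geq 1}$ in $X$ converging to $\bar{x}$, and aim to show $\limsup_{n\to\infty} u_\lambda(x_n) \leq u_\lambda(\bar{x})$. The key preliminary observation is that the infimum defining $u_\lambda(\bar{x})$ is attained: since $\br_\lambda(\bar{x})$ is non-empty, we may pick $\bar{y} \in \br_\lambda(\bar{x}) = \argmin_{y \in Z_{\bar{x}}} g_\lambda(\bar{x},y)$, whence $u_\lambda(\bar{x}) = g_\lambda(\bar{x},\bar{y}) \in G_\lambda(\bar{x})$. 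This is exactly the point at which non-emptiness of $\br_\lambda$ is needed, and it is the only genuinely delicate step of the argument.

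Next I would invoke the lower semi-continuity of $G_\lambda$ at $\bar{x}$, applied to the element $u_\lambda(\bar{x}) \in G_\lambda(\bar{x})$ and the sequence $x_n \to \bar{x}$: by Definition \ref{def1}(1) there exists a sequence $(z_n)_{n\geq 1}$ with $z_n \in G_\lambda(x_n)$ for every $n$ and $z_n \to u_\lambda(\bar{x})$. Since $u_\lambda(x_n) = \inf G_\lambda(x_n) \leq z_n$ for each $n$, passing to the limit superior yields $\limsup_{n\to\infty} u_\lambda(x_n) \leq \lim_{n\to\infty} z_n = u_\lambda(\bar{x})$. As $\bar{x}$ and the sequence were arbitrary, $u_\lambda$ is upper semi-continuous.

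Finally, upper semi-continuity of the real-valued function $u_\lambda$ implies that for every $a \in \R$ the superlevel set $\{x \in X \mid u_\lambda(x) \geq a\}$ is closed, equivalently that each sublevel set $\{x\in X \mid u_\lambda(x) < a\}$ is open and hence Borel. Since the rays $(-\infty, a)$ generate the Borel $\sigma$-algebra of $\R$, this proves that $u_\lambda$ is Borel measurable, completing the proof. I do not anticipate any serious obstacle beyond the attainment of the infimum noted above; in particular, no compactness of $Z_x$ nor continuity of $g_\lambda$ is required for this statement, only the lower semi-continuity of $G_\lambda$ together with the non-emptiness of $\br_\lambda$.
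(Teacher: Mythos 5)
Your proof is correct and follows essentially the same route as the paper: attainment of the infimum $u_{\lambda}(\bar{x}) = g_{\lambda}(\bar{x},\bar{y})$ via the non-emptiness of $\br_{\lambda}(\bar{x})$, then lower semi-continuity of $G_{\lambda}$ to produce $z_n \in G_{\lambda}(x_n)$ converging to $u_{\lambda}(\bar{x})$, whence $\limsup_{n} u_{\lambda}(x_n) \leq u_{\lambda}(\bar{x})$. The only (immaterial) difference is the final step: you deduce Borel measurability directly from the openness of the sublevel sets, whereas the paper cites Tong's theorem that an upper semi-continuous function on a metric space is the decreasing limit of continuous functions---your elementary argument is equally valid.
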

\begin{proof}
Let $\lambda\in \mathcal{H}_f$. Since $g$ is bounded over $Z$, we have that $u_{\lambda}(x) > -\infty$ for any $x\in X$. Fix any $x\in X$. Let $y\in \br_{\lambda}(x)$. Let $(x_n\in X)_{n\geq 1}$ be a sequence converging to $x$. By the lower semi-continuity of $G_{\lambda}$, there exists $y_n \in Z_{x_n}$ such that $g_{\lambda}(x,y) = \lim_{n\to \infty} g_{\lambda}(x_n,y_n)$. Therefore,
\begin{equation*}
u_{\lambda}(x) = g_{\lambda} (x,y)= \lim_{n\to \infty} g_{\lambda}(x_n,y_n) \geq \limsup_{n\to \infty} u_{\lambda}(x_n).
\end{equation*}
We obtain the upper semi-continuuity of $u_{\lambda}$ for any $\lambda\in \mathcal{H}_f$. Since any upper semi-continuous function defined on a metric space is the limit of a monotonically decreasing sequence of continuous functions \cite[Thm.\@ 3]{tong1952}, we deduce that $u_{\lambda}$ is Borel measurable.
\end{proof}

\end{document}